
\documentclass[final]{svjour3_edit}
\usepackage{amsmath, amssymb, latexsym}

\usepackage[english]{babel}
\usepackage[latin1]{inputenc}
\usepackage{tikz}
\usepackage{subcaption}
\usepackage{lscape}
\usepackage[justification=centering]{caption}
\usepackage{listings}
\lstset{
    literate={~} {$\sim$}{1}
}
\setcounter{MaxMatrixCols}{25}
\setcounter{secnumdepth}{5}
\numberwithin{equation}{section}

\usepackage{hyperref}

\newcommand{\enorm}[1]{\|#1\|_B}

\def\Oh{\mathcal{O}}
\def\Eps{\varepsilon}
\usepackage{color}
\usepackage{listings}
\lstset{
basicstyle
=\small, 
keywordstyle
=\color{black}\bfseries\underbar,
identifierstyle
=, 
commentstyle
=\color{white}, 
stringstyle
=\ttfamily, 
showstringspaces
=false} 

\newcommand{\listbox}[1]{\lstset{language=matlab,
    showstringspaces=false, 
    basicstyle=\footnotesize,
    frame=single,
    texcl=true,
    linewidth=#1\textwidth, 
    numbersep=5pt,
    numberblanklines=false, 
    xleftmargin=1cm,
    keywordstyle=\color{black}\bfseries}
}
\def\khat{\widetilde{k}}

\title{An introduction to the analysis and implementation of sparse grid
  finite element methods} 
  \titlerunning{Introduction to analysis and implementation of sparse grid FEMs}
\author{Stephen Russell \and Niall Madden}
\institute{S. Russell \at
School of Mathematics, Statistics and Applied Mathematics, \\
National University of Ireland, Galway, Ireland\\
\email{S.Russell1@NUIGalway.ie}
\and
N. Madden  \at
School of Mathematics, Statistics and Applied Mathematics, \\
National University of Ireland, Galway, Ireland\\
\email{Niall.Madden@NUIGalway.ie}}
\date{October, 2015}
\begin{document}

\maketitle

\begin{abstract}
Our goal  is to present an elementary approach to the
analysis and programming of sparse grid finite element methods. This
family of schemes can compute accurate solutions to partial
differential equations, but using far fewer degrees of freedom than
their classical counterparts.
After a brief discussion of the
classical Galerkin finite element method with bilinear elements, we
give a short analysis of what is probably the simplest sparse grid
method: the  two-scale technique  of Lin et
al. \cite{LiYa01}. We  
then demonstrate  how to extend this to a \emph{multiscale} sparse
grid method which, up to choice of basis, is equivalent to the hierarchical
approach,  
as described in, e.g., \cite{BuGr04}. However, by presenting it as 
an extension of the
two-scale method, we can give an elementary treatment of its analysis
and implementation. 
For each method considered, we provide
MATLAB code, and a comparison of accuracy and  
computational costs.

\keywords{finite element  \and sparse grids \and two-scale
  discretizations \and multiscale discretization \and MATLAB.}

\subclass{65N15 \and  65N30 \and 65Y20}
\end{abstract}

\section{Introduction}

Sparse grid methods provide a means of approximating functions and
data in a way that avoids the notorious ``curse of dimensionality'':
for fixed accuracy, the computational effort required by classical
methods grows exponentially in the number of dimensions. Sparse grid
methods hold out the hope of retaining the accuracy of classical
techniques, but at a cost that is essentially independent of the
number of dimensions.

An important application of sparse grid methods is the solution of
partial differential equations (PDEs) by finite element methods
(FEMs). Naturally, the solution to a PDE is found in an infinite
dimensional space. A FEM first reformulates the problem as an integral
equation, and then restricts this problem to a suitable 
finite-dimensional subspace; most typically,  this subspace is  comprised of
piecewise polynomials.  This  ``restricted'' problem can be
expressed as a matrix-vector equation; solving it gives 
the finite element approximation to the solution of the PDE. For
many FEMs, including  those considered in this article, this
solution is the best possible approximation (with respect to a certain
norm)  that
one can find in the finite-dimensional subspace. That is, the accuracy
of the FEM solution depends on the approximation properties of the
finite dimensional space.

The main computational cost incurred by a FEM is in the solution of
the matrix-vector equation. This linear system is sparse, and
amenable to solution by direct methods, such as Cholesky-factorisation,
if it is not too large, or  by highly efficient iterative methods,
such as multigrid methods, for larger systems. The order of the system
matrix is the dimension of the finite-element space, and so great
computational efficiencies can be gained over classical FEMs by
constructing a finite-dimensional space of reduced size without
compromising the approximation properties: this is what is achieved
by sparse grid methods.

We will consider the numerical solution of the following PDE
\begin{subequations}
\label{eq:model}
 \begin{equation}
 \label{eq:2D problem}
  Lu:=-\Delta u + ru=f(x,y) \quad \text{in }\Omega :=(0,1)^2,
 \end{equation}
\begin{equation}
\label{eq:2D BCs}
 u=0 \quad \text{on } \partial \Omega.
\end{equation}
\end{subequations}
Here $r$ is a positive constant and so, for simplicity, we shall take
$r=1$, but $f$ is an arbitrary function. Our
choice of problem is motivated by the desire, for the purposes of
exposition, to keep the setting as simple as possible, but without
trivialising it. The main advantages of choosing $r$ to be constant
are that no quadrature is required when computing the system matrix,
and that accurate solutions can be obtained using a uniform mesh.
Thus, the construction of the system matrix for the standard Galerkin
FEM is reduced to
several lines of code, which  we
show how to do in Section
\ref{sec:Galerkin}. In Section 
\ref{sec:two-scale} we show how to develop the two-scale sparse grid
method and, in Section \ref{sec:multiscale} show how to extend this to
a multiscale setting. A comparison of the accuracy and efficiency of
the methods is given in Section \ref{sec:good times}. We stress that
the choice of constant $r$ in \eqref{eq:model} is only to simplify
the implementation of the standard  Galerkin method: our implementation
of the sparse grid methods is identical for variable $r$, and
would require only minor modifications for nonuniform tensor
product meshes.

We aim to provide an introduction to the mathematical analysis
and computer implementation of 
sparse grids that is accessible to readers who have a basic knowledge of 
finite element methods. 
For someone who is new to FEMs, we propose \cite[Chapter
14]{SuMa03} as a primer for the key concepts. An extensive mathematical treatment is given
in \cite{BrSc08}: we use results from its early chapters. 

We do not aim to present a comprehensive overview of
the state of the art sparse grid methods: there are many important
contributions to this area which we do not cite. However, we hope
that, having read this article, and experimented with the MATLAB
programs provided, the reader will be motivated to learn more from
important references in the area, such as \cite{BuGr04}.

\subsection{MATLAB code}

All our numerical examples are implemented in MATLAB. Full source code is available
from
\href{http://www.maths.nuigalway.ie/~niall/SparseGrids}
{\texttt{www.maths.nuigalway.ie/$\sim$niall/SparseGrids}} while snippets are
presented in the text.
Details
of  individual functions are given  in the relevant sections. 
For those  using MATLAB for the first time, we recommend \cite{Dris09}
as a readable introduction. Many of
the finer  points of programming in MATLAB are presented in \cite{Mole04} and
\cite{HiHi05}. A detailed study of programming FEMs in MATLAB may be found
in \cite{Gock06}.

We make use of the freely-available Chebfun toolbox
\cite{DrHa14} to allow the user to choose their own test problem, for
which the corresponding $f$ is automatically computed. We also use 
Chebfun for some  computations related to calculating the error. (We highly
recommend using Chebfun Version 5 or later, as some of the operations we use are  significantly faster
than in earlier versions). Only \href{http://www.maths.nuigalway.ie/~niall/SparseGrids/Test_FEM.m}{\texttt{Test\_FEM.m}}, 
the main test harness, 
uses Chebfun. That script contains comments to  
show how it may be modified to avoid using
Chebfun, and thus run on Octave \cite{octave:2014}. We have found the
direct linear  solver (``backslash'') to be less efficient in Octave
than MATLAB, and thus timing may be qualitatively different from those
presented here (though no doubt some optimisations are possible).

\subsection{Notation}
We use the standard $L_2$ inner product and norm:
\begin{equation*}
 (u, v) := \int_\Omega uv \,d\Omega, \qquad 
\|u\|_{0,\Omega} = \sqrt{(u,u)}.
\end{equation*}
The bilinear form, $B(\cdot,\cdot)$ associated with~\eqref{eq:model} is 
\begin{equation*}
 B(u,v)= (\nabla u,\nabla v) + (ru,v),
\end{equation*}
which induces the energy norm
\begin{equation}
 \label{eq:energynorm}
 \enorm{u} = \{\|\nabla u\|_{0,\Omega}^2+ \|u\|_{0,\Omega}^2\}^{1/2}.
\end{equation}

The space of functions whose (weak)
$k^\mathrm{th}$  derivatives are integrable on a domain $\omega$
is denoted by $H^k(\omega)$, and if these functions also 
 vanish on the
boundary of $\omega$ it is $H^k_0(\omega)$. See, e.g.,
\cite[Chap. 2]{BrSc08}  for more formal definitions.

Throughout this paper the letter $C$, with or without subscript,
denotes a generic  
positive constant that is independent of the discretization parameter,
$N$, and the scale of the method 
$k$, and may stand for different values in different places, even
within a single equation or inequality.

\section[standard Galerkin FEM]{Standard Galerkin finite element method}
\label{sec:Galerkin}

\subsection{A Galerkin FEM with bilinear elements}
The weak form of~(\ref{eq:model}) with $r=1$ is: \emph{find u $\in
  H^1_0(\Omega)$} 
such that
 \begin{equation}
 \label{eq:Bilinear} 
  B(u,v):= (\nabla u, \nabla v) + (u,v) = (f,v) \quad
  \forall v \in H^1_0(\Omega). 
 \end{equation}
A Galerkin FEM is obtained by replacing  $H_0^1(\Omega)$ in
\eqref{eq:Bilinear} with a suitable finite-dimensional subspace. 
We will take this to be
the space of piecewise bilinear functions  on a uniform mesh with $N_x$
equally sized intervals in one coordinate direction and $N_y$ equally sized intervals in the other.
We first form a one-dimensional mesh 
$\omega_x=\{x_0, x_1, \dots, x^{}_{N_x}\}$, where $x_i=i/N_x$. 
Any piecewise linear function on this mesh
can be uniquely expressed in terms of the so-called ``hat'' functions
 \begin{equation}
 \label{eq:1Dbasis}
  \psi^N_{i}(x) =
   \begin{cases}
    \displaystyle \frac{x-x_{i-1}^{}}{x_i-x_{i-1}} & \text{if }
    x_{i-1}^{} \leq  x<x_i, \\
    \displaystyle \frac{x_{i+1}^{}-x}{x_{i+1}-x_i}& \text{if }
    x_i^{}\leq x<x_{i+1}^{}, \\
        0       & \text{otherwise.}
   \end{cases}
 \end{equation}
Similarly, we can construct a mesh in the $y$-direction:
$\omega_y=\{y_0, y_1, \dots, y^{}_{N_y}\}$, where $y_j=j/N_y$. We can form a
two-dimensional $N_x\times N_y$ mesh by taking the Cartesian product of
$\omega_x$ and $\omega_y$, and then define  
 the space of piecewise bilinear functions,  $V^{}_{N_x,N_y}
 \subset H^1_0(\Omega)$ 
 as
 \begin{equation}
\label{eq:full basis}
  V^{}_{N_x,N_y}  = \text{span}
  \left\{\psi^{N_x}_{i}(x)\psi^{N_y}_{j}(y)\right\}^{i=1:N_x-1}_{j=1:N_y-1},
\end{equation}
where here we have adopted the compact MATLAB notation $i=1\!:\!N_x-1$ meaning
$i=1, 2, \dots, N_x-1$. (Later we  use expressions such as 
$i=1\!:\!2\!:\!N_x-1$ meaning $i=1, 3, 5, \dots, N_x-1$).

For sparse grid methods, we will be interested in meshes
where $N_x \neq N_y$. However, for the classical  Galerkin method,  we
take $N_x=N_y=N$. Thus, our finite element method for  \eqref{eq:model} is:
 \emph{find $u^{}_{N,N} \in {V}^{}_{N,N}$ such that}
\begin{equation}
\label{eq:Galerkin}
 B(u_{N,N}^{},v^{}_{N,N}) = (f,v^{}_{N,N})
 \quad \text{ for all } v^{}_{N,N} \in {V}^{}_{N,N}.
\end{equation}

\subsection{Analysis of the Galerkin FEM}
\label{sec:analysis}


The bilinear form defined in \eqref{eq:Bilinear} is continuous and
coercive, so \eqref{eq:Galerkin} possesses a unique
solution. Moreover, as noted in \cite[Section  3]{LMSZ09}, one has the
quasi-optimal bound:
\[
\enorm{u - u^{}_{N,N}}
\leq C \inf_{\psi \in V^{}_{N,N}(\Omega)} \enorm{u-\psi}.
\]
To complete the analysis, we can choose a good approximation of $u$ in 
$V^{}_{N,N}$. A natural choice is the nodal
interpolant of $u$. To be precise, let
$I_{N,N}:C(\bar{\Omega})\rightarrow V^{}_{N,N}$ be the
nodal piecewise bilinear interpolation operator that projects onto
$V^{}_{N,N}$. 
Since $I^{}_{N,N}u \in V^{}_{N,N}$,  one can complete the 
error analysis using the following classical results. For a derivation, see, e.g.,
\cite{GGRS07}. 
\begin{lemma}
\label{interpbounds}
 Let  $\tau$ be any mesh rectangle of size $h \times h$. Let $u\in
 H^2(\tau)$. Then its piecewise  bilinear nodal interpolant,
 $I^{}_{N,N}u$,
 satisfies the bounds
 \begin{subequations}
\begin{equation}
\label{eq:interpbounds1}
 \|u-I^{}_{N,N}u\|_{0,\Omega} \leq Ch^2( \|u_{xx}\|_{0,\Omega} + \|u_{xy}\|_{0,\Omega}
 + \|u_{yy}\|_{0,\Omega}),
 \end{equation}
 \begin{equation}
 \label{eq:interpbounds2}
 \|(u-I^{}_{N,N}u)_x\|_{0,\Omega} \leq C h(\|u_{xx}\|_{0,\Omega} + \|u_{xy}\|_{0,\Omega}),
 \end{equation}
 \begin{equation}
 \label{eq:interpbounds3}
\|(u-I^{}_{N,N}u)_y\|_{0,\Omega} \leq Ch(\|u_{xy}\|_{0,\Omega} + \|u_{yy}\|_{0,\Omega}).
\end{equation}
\end{subequations}
\end{lemma}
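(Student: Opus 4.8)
The plan is to reduce everything to a single reference rectangle via an affine change of variables, establish the three bounds there, and then scale back. First I would work on the reference element $\hat\tau = (0,1)^2$, where the bilinear interpolant $\hat I$ is characterized by agreement with $\hat u$ at the four corners. The central tool is the Bramble--Hilbert lemma: the operator $\hat u \mapsto \hat u - \hat I\hat u$ is a bounded linear map from $H^2(\hat\tau)$ into $H^2(\hat\tau)$, and it annihilates the space $\mathbb{Q}_1$ of bilinear polynomials (since $\hat I$ reproduces such polynomials exactly). However, for the sharper right-hand sides appearing in \eqref{eq:interpbounds1}--\eqref{eq:interpbounds3}, which involve only the three second derivatives $u_{xx}, u_{xy}, u_{yy}$ and \emph{not} a full $H^2$ seminorm with cross terms weighted differently, I would instead use a tensor-product/Peano-kernel argument: write $\hat u - \hat I \hat u$ as a sum of three one-dimensional interpolation errors (first in $x$, then in $y$, then the mixed term), each of which can be represented through an integral remainder involving exactly one of $\hat u_{xx}$, $\hat u_{yy}$, or $\hat u_{xy}$. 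Bounding each remainder by Cauchy--Schwarz on $\hat\tau$ then gives $\|\hat u - \hat I\hat u\|_{0,\hat\tau} \le C(\|\hat u_{xx}\|_{0,\hat\tau} + \|\hat u_{xy}\|_{0,\hat\tau} + \|\hat u_{yy}\|_{0,\hat\tau})$, and analogous estimates for the first derivatives of the error, with $\|\hat u_{yy}\|$ dropping out of the bound on $(\hat u - \hat I\hat u)_x$ and $\|\hat u_{xx}\|$ dropping out of the bound on $(\hat u - \hat I\hat u)_y$.

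Next I would transfer these reference-element estimates to a general rectangle $\tau$ of size $h \times h$ by the substitution $x = x_0 + h\hat x$, $y = y_0 + h\hat y$. Under this map, the $L_2$ norm of a function picks up a factor $h$ (from the Jacobian $h^2$ under the square root), each differentiation with respect to a physical variable brings down a factor $h^{-1}$, and the interpolation operator is preserved because affine maps of this form send $\mathbb{Q}_1$ to $\mathbb{Q}_1$ and commute with nodal interpolation. Carrying the factors of $h$ through the three reference estimates yields \eqref{eq:interpbounds1}--\eqref{eq:interpbounds3} on a single rectangle $\tau$, with the norms on the right-hand side taken over $\tau$ rather than over $\Omega$.

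Finally I would sum over all mesh rectangles. Squaring the local estimates, adding over the $N^2$ rectangles of the uniform mesh, and using $\|\cdot\|_{0,\Omega}^2 = \sum_\tau \|\cdot\|_{0,\tau}^2$ together with the elementary inequality $(a+b+c)^2 \le 3(a^2+b^2+c^2)$ gives the global bounds as stated (after relabelling the constant). The main obstacle is the first step: getting the \emph{precise} form of the right-hand side, with only the indicated second derivatives appearing and the anisotropic structure of \eqref{eq:interpbounds2}--\eqref{eq:interpbounds3} respected, requires the explicit tensor-product decomposition of the interpolation error rather than a black-box application of Bramble--Hilbert (which would give a symmetric $H^2$-seminorm bound). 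Everything after that is routine scaling and summation, and in any case the paper defers the details to \cite{GGRS07}.
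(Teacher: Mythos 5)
Your outline is correct: the tensor-product (Peano-kernel) decomposition of $u-I^{}_{N,N}u$ into the two one-directional interpolation errors plus the mixed term (the latter bounded by first-order one-dimensional estimates in each variable so that only $u_{xy}$ appears), followed by scaling to an $h\times h$ rectangle and summing squared local bounds over the mesh, is the standard derivation and respects the anisotropic right-hand sides of \eqref{eq:interpbounds2}--\eqref{eq:interpbounds3}, which, as you note, a black-box Bramble--Hilbert argument would not. The paper itself offers no proof of Lemma~\ref{interpbounds}, deferring to \cite{GGRS07}, and your sketch is essentially the argument given there.
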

The following results follow directly from Lemma~\ref{interpbounds}.
\begin{lemma}
 \label{lem:uINN}
 Suppose $\Omega$= $(0,1)^2$. Let $u\in H^2_0(\Omega)$ and $I^{}_{N,N}u$
 be its piecewise bilinear nodal interpolant.
 Then there exists a constant $C$, independent of $N$, such that
\begin{equation*}
\|u-I^{}_{N,N}u \|_{0,\Omega} \leq CN^{-2}, 
\quad \text{ and } \quad 
\| \nabla(u-I^{}_{N,N}u)\|_{0,\Omega} \leq CN^{-1}.
\end{equation*}
Consequently, $\enorm{u-I_{N,N}u} \leq CN^{-1}$.
\end{lemma}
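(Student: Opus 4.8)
The plan is to derive Lemma~\ref{lem:uINN} directly from the local interpolation estimates in Lemma~\ref{interpbounds} by summing over all mesh rectangles. Since $N_x=N_y=N$ here, every mesh rectangle $\tau$ has size $h\times h$ with $h=1/N$. The first step is to recognise that the bounds in Lemma~\ref{interpbounds} are stated globally (the right-hand sides already involve $\|\cdot\|_{0,\Omega}$), so one can in fact apply them directly with $h=N^{-1}$: this immediately gives
\[
\|u-I_{N,N}u\|_{0,\Omega} \le C N^{-2}\bigl(\|u_{xx}\|_{0,\Omega}+\|u_{xy}\|_{0,\Omega}+\|u_{yy}\|_{0,\Omega}\bigr),
\]
and similarly $N^{-1}$ bounds for the two first-derivative components. (Strictly, if one reads Lemma~\ref{interpbounds} as a purely local statement on a single $\tau$, the passage to the global bound is the routine step of squaring, summing over the $N^2$ rectangles, and using additivity of $\|\cdot\|_{0,\tau}^2$ over the partition; I would mention this but not belabour it.)

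The second step is to absorb the Sobolev seminorms into the constant. Since $u\in H^2_0(\Omega)$, the quantities $\|u_{xx}\|_{0,\Omega}$, $\|u_{xy}\|_{0,\Omega}$, $\|u_{yy}\|_{0,\Omega}$ are all finite and depend only on $u$, not on $N$; hence each may be subsumed into the generic constant $C$, yielding $\|u-I_{N,N}u\|_{0,\Omega}\le CN^{-2}$. Applying the same reasoning to \eqref{eq:interpbounds2} and \eqref{eq:interpbounds3}, and then using $\|\nabla w\|_{0,\Omega}^2 = \|w_x\|_{0,\Omega}^2 + \|w_y\|_{0,\Omega}^2$ with $w=u-I_{N,N}u$, gives $\|\nabla(u-I_{N,N}u)\|_{0,\Omega}\le CN^{-1}$.

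The final step is simply to combine these two estimates in the definition \eqref{eq:energynorm} of the energy norm:
\[
\enorm{u-I_{N,N}u} = \bigl\{\|\nabla(u-I_{N,N}u)\|_{0,\Omega}^2 + \|u-I_{N,N}u\|_{0,\Omega}^2\bigr\}^{1/2} \le \bigl\{C^2N^{-2}+C^2N^{-4}\bigr\}^{1/2}\le CN^{-1},
\]
the last inequality because $N^{-4}\le N^{-2}$ for $N\ge 1$. There is no real obstacle here: the lemma is essentially a corollary, and the only points requiring a word of care are that $u\in H^2_0(\Omega)$ guarantees the seminorms on the right are well-defined and $N$-independent (so they can be hidden in $C$), and that the $H^2$-regularity needed for Lemma~\ref{interpbounds} holds on each rectangle. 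I would state the proof in three or four lines, citing Lemma~\ref{interpbounds} for the interpolation bounds and noting that the energy-norm conclusion follows by the elementary inequality above.
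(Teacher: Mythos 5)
Your proof is correct and matches the paper's treatment: the paper simply states that this lemma ``follows directly'' from Lemma~\ref{interpbounds} with $h=N^{-1}$, absorbing the $N$-independent Sobolev seminorms of $u\in H^2_0(\Omega)$ into the generic constant and combining the $L_2$ and gradient bounds in the energy norm, exactly as you do.
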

It follows immediately from these results 
that there exists a constant $C$, independent of $N$, such that
\begin{equation}
\label{eq:enormGalerkinFEM}
  \enorm{u-u^{}_{N,N}} \leq CN^{-1}.
\end{equation}

\subsection{Implementation of the Galerkin FEM}
\label{sec:Galerkin implementation}

To implement the method \eqref{eq:Galerkin}, we need to construct and
solve a linear system of equations. A useful FEM program also requires
ancillary tools, for example, to visualise the solution and to
estimate  errors.

Because the test problem we have chosen has a constant left-hand side,
the system matrix can be constructed in a few lines, and without
resorting to numerical quadrature. Also, because elements of the space
defined in \eqref{eq:full basis} are expressed as products of
one-dimensional functions,  the system
matrix can be expressed in terms of (Kronecker) products of matrices
arising from discretizing one-dimensional problems. We now explain how
this can be done. We begin with the one-dimensional analogue of
\eqref{eq:model}:
\[
-u''(x) + u(x)=f(x) \text{ on } (0,1),
\quad \text{ with } \quad 
u(0)=u(1)=1.
\]
Its finite element formulation is:  \emph{find $u^{}_{N} \in {V}^{}_{N}(0,1)$ such that}
\begin{equation}
\label{eq:1D FEM}
 (u_{N}',v'_{N}) + (u_N, v_N) = (f,v^{}_{N})
 \quad \text{ for all } v^{}_{N} \in {V}^{}_{N}(0,1).
\end{equation}
This $u_N^{}$ can be expressed as a linear combination of the $\psi^N_i$
defined in \eqref{eq:1Dbasis}:
\[
u_N^{} = \sum_{j=1, \dots, N-1}\mu_j^{} \psi^N_j(x).
\]
Here the $\mu_j$ are the $N-1$ unknowns  determined by solving the
$N-1$ equations obtained by taking $v_N^{} = \psi_i^N(x)$, for $i=1,
\dots, N-1$,  in \eqref{eq:1D FEM}.
Say we write the  system matrix for these  equations as $(a_2 + a_0)$,
where $a_2$ (usually called the \emph{stiffness matrix}) and $a_0$
(the \emph{mass matrix}) are $(N-1)\times(N-1)$ matrices that correspond,
respectively, to the terms  $(u_{N}',v'_{N})$ and $(u_N, v_N)$. 
Then $a_2$ and $a_0$ are tridiagonal matrices whose stencils are,
respectively, 
\[
N \begin{pmatrix}-1 & 2 & -1
\end{pmatrix}
\quad 
\text{ and }
\quad
\frac{1}{6N} \begin{pmatrix}1 & 4 &1
\end{pmatrix}.
\]

For the two-dimensional problem \eqref{eq:model} there are $(N-1)^2$
unknowns to be determined. Each of these are associated with a node on
the mesh, which we must number uniquely. 
We will follow a standard convention, and use lexicographic
ordering. That is, the node at $(x_i, y_j)$ is labelled
$k=i+(N-1)(j-1)$. The basis function associated with this node is 
$\phi_k^N = \psi_i^N(x)\psi_j^N(x)$.
Then the  $(N-1)^2$ equations in the  linear system for
\eqref{eq:Galerkin} 
can be written as
\[
B(u^{}_{N,N}, \phi_k^N) = (f, \phi_k^N), 
\quad \text{ for } \quad 
k=1, \dots (N-1)^2.
\]
Since $r=1$, this can be expressed in terms of 
Kronecker products 
(see, e.g., \cite[Chap. 13]{Laub05}) 
of the one-dimensional matrices described above:
\begin{equation}
\label{eq:kron}
A = a_0 \otimes a_2  
   + a_2 \otimes a_0
   + a_0 \otimes a_0.
\end{equation}
The MATLAB
code for constructing this matrix is found in 
\href{http://www.maths.nuigalway.ie/~niall/SparseGrids/FEM_System_Matrix.m}{\texttt{FEM\_System\_Matrix.m}}.
The main computational cost of executing that function is incurred by
computing the Kronecker products. Therefore, to improve efficiency,   
\eqref{eq:kron} is coded as 
\listbox{.56}
\begin{lstlisting}
A = kron(a0,a2) + kron(a2+a0,a0);
\end{lstlisting}

The right-hand side of the finite element linear system is computed by
the function
\href{http://www.maths.nuigalway.ie/~niall/SparseGrids/FEM_RHS.m}{\texttt{FEM\_RHS.m}}.
As is well understood, the order of accuracy of this  quadrature must
be at least that of the underlying scheme, in order for quadrature
errors not to pollute the FEM solution. As given, the code
uses a two-point Gaussian quadrature rule (in each direction) to compute 
\begin{equation}
\label{eq:Galerkin RHS}
b_k = (f,\phi_k^N), \quad \text{ for } \quad 
k=1, \dots, (N-1)^2.
\end{equation}
It can be easily adapted to use a different quadrature scheme (see, e.g., 
\cite[\S 4.5]{GGRS07} for higher order Gauss-Legendre and
Gauss-Lobatto rules for  these
elements).

To compute the error, we need to calculate 
$\sqrt{B(u-u_{N,N}, u-u_{N,N})}$.
Using  Galerkin orthogonality, 
since $u$ solves \eqref{eq:Bilinear} and
$u_{N,N}^{}$ solves \eqref{eq:Galerkin}, one  sees that
\begin{equation}
\label{eq:error trick}
B(u-u_{N,N}, u-u_{N,N}) = (f,u) - (f,u_{N,N}^{}).
\end{equation}
The first term on the right-hand side can be computed 
(up to machine precision) 
using Chebfun 
\listbox{.65}
\begin{lstlisting}
Energy = integral2(u.*f,[0, 1, 0, 1]); 
\end{lstlisting}
where $u$ and $f$ are chebfuns that represent the solution and
right-hand side in \eqref{eq:model}. We estimate the term
$(f,u_{N,N}^{})$ as 
\listbox{.45}
\begin{lstlisting}
Galerkin_Energy =  uN'*b;
\end{lstlisting}
where {uN} is the solution vector, and ${b}$ is the right-hand side of
the linear system, as defined in \eqref{eq:Galerkin RHS}.

\subsection{Numerical results for the Galerkin FEM}
\label{sec:Galerkin numerics}

The test harness for the code described  in Section
\ref{sec:Galerkin implementation} is named 
 \href{http://www.maths.nuigalway.ie/~niall/SparseGrids/Test_FEM.m}{\texttt{Test\_FEM.m}}.
As given, it implements the method for $N=2^4, 2^5, \dots, 2^{10}$, so
the problem size does not exceed what may be  solved on a
standard desktop computer. Indeed, on a computer with at least 8Gb of
RAM, it should run with $N=2^{11}$ intervals in each coordinate
direction, resolving a total of $4,190,209$ degrees of freedom. We
present the results in \autoref{fig:FEM error}, 
which  were generated on a computer equipped with  
 32Gb of RAM, allowing  to solve problems with $N=2^{12}$
intervals in each direction (i.e., $16,769,025$ degrees of freedom).
One can observe that the
method is first-order convergent, which is in agreement with 
\eqref{eq:enormGalerkinFEM}. 
\autoref{fig:FEM error 64} shows $u-u_{N,N}^{}$ for $N=64$.

\begin{figure}[h!]
\centering
\begin{subfigure}[h]{0.49\textwidth}
\centering
\includegraphics[scale=.35]{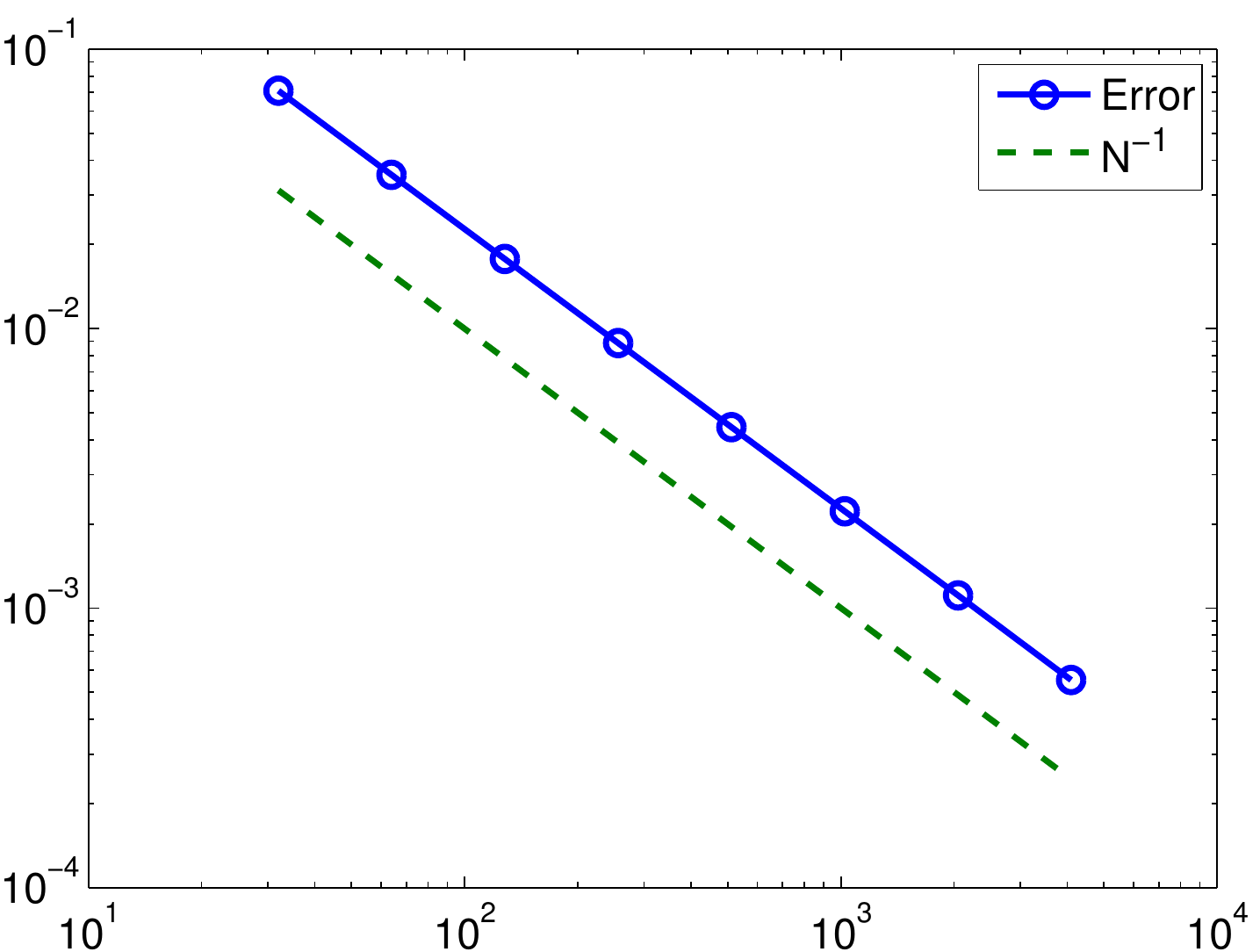}%
\caption{Rate of convergence}
\label{fig:FEM ROC}
\end{subfigure}
\begin{subfigure}[h]{0.49\textwidth}
\centering
\includegraphics[scale=.35]{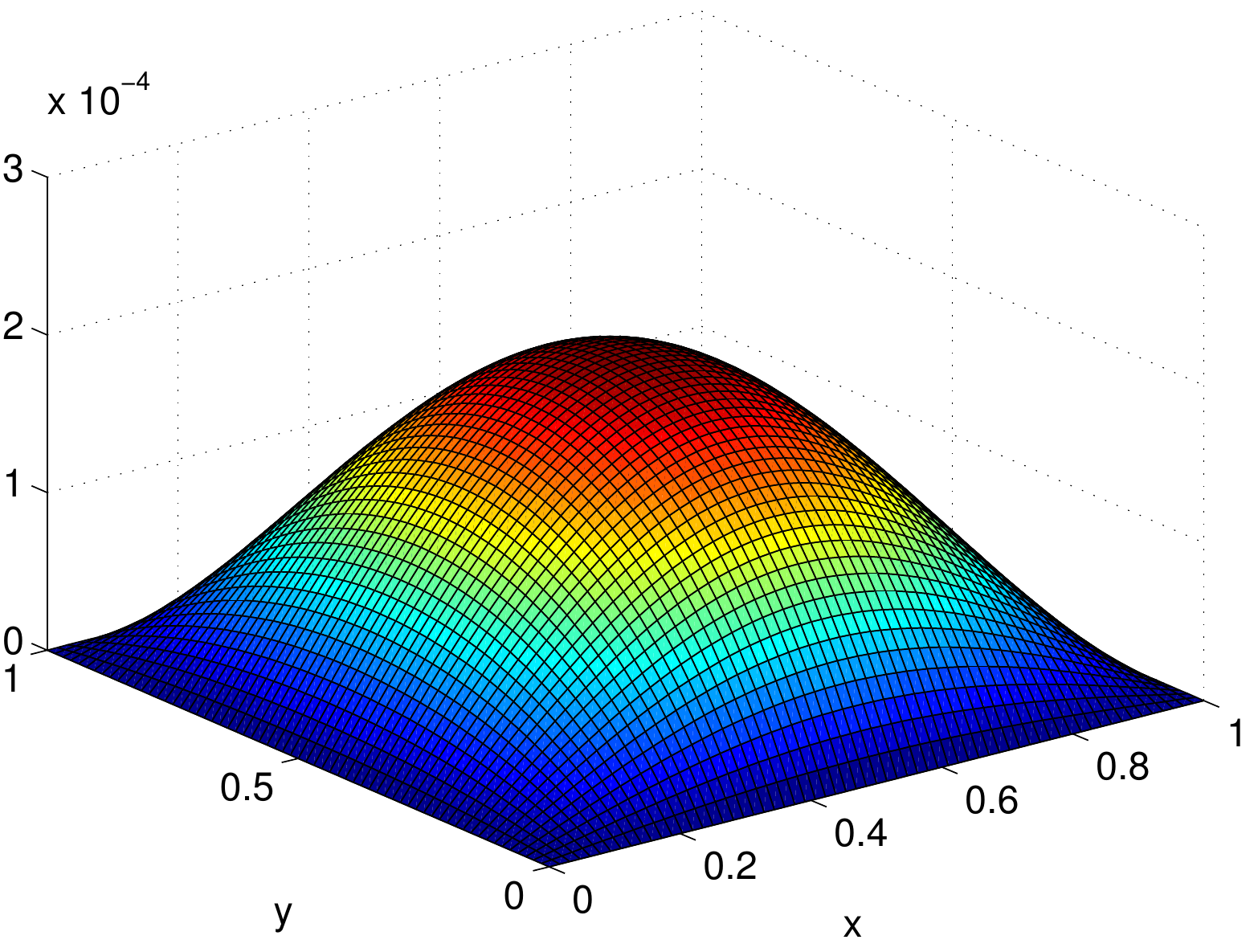}
\caption{$u-u_{64,64}^{}$}
\label{fig:FEM error 64}
\end{subfigure}
\caption{Left: convergence of the classical Galerkin method. Right:
  the error in the Galerkin  solutions with $N=64$}
\label{fig:FEM error}
\end{figure}

\section[two-scale sparse grid FEM]{A two-scale sparse grid finite element method}
\label{sec:two-scale}

The main purpose of this article is to introduce sparse grid FEMs, which can achieve accuracy that is comparable to the
classical Galerkin FEM of Section \ref{sec:Galerkin}, but with much
fewer degrees of freedom.
As we describe  in Section \ref{sec:multiscale}, most of these methods are
described as multiscale or ``multi-level''. However, the simplest
sparse grid method is, arguably, the two-scale method proposed by Lin
et al. \cite{LiYa01}. Those authors were motivated to study the method
in order to prove certain superconvergence results.
For us, its appeal is the simplicity of its
implementation and analysis. By extending our MATLAB program from Section
\ref{sec:Galerkin implementation} by only a few lines of code, we can
obtain a solution that has, essentially, the same accuracy as the
classical FEM, but using  $N^{4/3}$ rather than $N^2$ degrees of
freedom. Moreover, having established some basic principles of the
method in this simple setting, we are well equipped to consider the
more complicated multiscale method of Section~\ref{sec:multiscale}.

Recalling Section \ref{sec:analysis}, the error analysis of a FEM  follows
directly from establishing the approximation properties of the
finite dimensional space and, typically, this follows from an analysis
of a particular candidate for an approximation of the true solution: the
nodal interpolant. Therefore, 
in Section \ref{sec:two-scale interp} we describe a two-scale
interpolation operator. This 
naturally leads to a two-scale FEM, the implementation of which is
described in Section \ref{sec:two-scale implementation}. That section
also contains numerical results that  allow us to
verify that the accuracy of the solution is very similar to the
FEM of Section \ref{sec:Galerkin}. Although the whole
motivation of the method is to reduce the computational cost of
implementing a classical Galerkin method, we postpone a discussion of
the efficiency of the method to Section \ref{sec:good times}, where we
compare the classical, two-scale and multiscale methods directly.

We will make repeated use of the following one-dimensional
interpolation bounds. Their proofs can be found in \cite[Theorem.\ 14.7]{SuMa03}.
\begin{theorem}
 \label{eq:pwlininterpbound}
 Suppose that $u \in H^2(0,1) \cap H^1_0(0,1)$. Then the piecewise
 linear interpolant $I^{}_Nu$ satisfies 
\begin{equation}
 \label{eq:1Dinterpbounds}
  \|u-I^{}_Nu\|_{0,\Omega} \leq Ch_i^2 \|u''\|_{0,\Omega},
\quad \text{  and } \quad
  \|u'-(I^{}_Nu)'\|_{0,\Omega} \leq Ch_i\|u''\|_{0,\Omega},
\end{equation}
for $i=1,2,...,N$, and $h_i = x_i-x_{i-1}$.
\end{theorem}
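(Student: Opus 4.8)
The statement to be proved is Theorem~\ref{eq:pwlininterpbound}: the standard one-dimensional piecewise-linear interpolation bounds in the $L_2$ norm and the $H^1$ seminorm. The plan is to argue interval-by-interval, since the global $L_2$ norm is the square root of the sum of squares of the local $L_2$ norms, and similarly for the derivative. So first I would fix an arbitrary subinterval $[x_{i-1},x_i]$ of length $h_i$, and let $e = u - I_N u$ be the interpolation error restricted to that interval. The key observation is that $e$ vanishes at both endpoints $x_{i-1}$ and $x_i$, because the piecewise-linear interpolant is defined to match $u$ at the mesh nodes.

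Next I would establish the local estimates. For the derivative bound, since $e(x_{i-1}) = e(x_i) = 0$, Rolle's theorem gives a point $\xi \in (x_{i-1},x_i)$ with $e'(\xi) = 0$; then for any $x$ in the interval, $e'(x) = \int_\xi^x e''(t)\,dt = \int_\xi^x u''(t)\,dt$ (using that the second derivative of the linear interpolant is zero on the open interval). Applying Cauchy--Schwarz gives $|e'(x)|^2 \le h_i \int_{x_{i-1}}^{x_i} |u''(t)|^2\,dt$, and integrating over $x$ yields $\|e'\|_{0,[x_{i-1},x_i]}^2 \le h_i^2 \|u''\|_{0,[x_{i-1},x_i]}^2$. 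For the $L_2$ bound, since $e$ vanishes at $x_{i-1}$, write $e(x) = \int_{x_{i-1}}^x e'(t)\,dt$, apply Cauchy--Schwarz to get $|e(x)|^2 \le h_i \|e'\|_{0,[x_{i-1},x_i]}^2$, integrate, and combine with the derivative estimate just obtained to get the factor $h_i^2$ on $\|e'\|$, hence $h_i^4$ under the square, i.e.\ $\|e\|_{0,[x_{i-1},x_i]} \le C h_i^2 \|u''\|_{0,[x_{i-1},x_i]}$. (Alternatively, one can invoke the Bramble--Hilbert lemma or a Poincar\'e-type inequality on the reference interval and scale, but the elementary route above is more in keeping with the spirit of this paper.)

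Finally I would sum over $i = 1,\dots,N$. Writing $h = \max_i h_i$ and using $h_i \le h$, the sum of the local $L_2$ bounds gives $\|e\|_{0,\Omega}^2 = \sum_i \|e\|_{0,[x_{i-1},x_i]}^2 \le C \sum_i h_i^4 \|u''\|_{0,[x_{i-1},x_i]}^2 \le C h^4 \|u''\|_{0,\Omega}^2$, and likewise for the seminorm, yielding exactly the bounds claimed (with $h_i$ replaced by the local mesh width, or by $h$ on a uniform mesh as used here). I do not expect a genuine obstacle in this argument; the only points requiring a little care are the regularity bookkeeping (that $u \in H^2$ makes $u''$ an $L_2$ function so the integrals make sense, and $e'$ is absolutely continuous on each closed subinterval) and the fact that one must work on each subinterval separately before summing, rather than attempting a single global application of Rolle's theorem. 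Since the result is classical and the reference \cite[Theorem~14.7]{SuMa03} is cited, in the paper itself I would simply defer to that reference rather than reproducing the calculation.
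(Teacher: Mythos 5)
Your proposal is correct: the paper itself supplies no proof of this theorem, deferring entirely to \cite[Theorem 14.7]{SuMa03} (as you anticipated at the end), and your element-wise argument --- $e=u-I_Nu$ vanishes at the nodes, Rolle gives a zero of $e'$, then $e'(x)=\int_\xi^x u''$ and Cauchy--Schwarz yield the local $H^1$ and $L_2$ bounds, which are summed over the elements --- is exactly the standard proof of this classical result. The only point worth noting is that the theorem as stated mixes a global norm with the local width $h_i$, which you resolve correctly by bounding with $h=\max_i h_i$ (here the mesh is uniform, so this is immaterial).
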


\subsection{The two-scale interpolant}
\label{sec:two-scale interp}

 Let $V^{}_{N_x}([0,1])$ be the space of piecewise linear functions defined
on the one-dimensional  mesh with  $N_x$ intervals on $[0,1]$.
Define $V^{}_{N_y}([0,1])$ in the same way.
Then let  $V^{}_{N_x,N_y}$ be  the tensor product space
$V^{}_{N_x}([0,1])\times V^{}_{N_y}([0,1])$.
Let $I_{N_x,N_y}:C(\bar{\Omega})\rightarrow V^{}_{N_x,N_y}$ be
the nodal piecewise bilinear
interpolation operator that projects onto  $V^{}_{N_x,N_y}$.
Write $I_{N_x,0}$ for the interpolation operator that
interpolates only in the $x$-direction, so $I_{N_x,0}:C(\bar{\Omega})\rightarrow
V^{}_{N_x}([0,1])\times C([0,1])$. Similarly, let $I_{0,N_y}:C(\bar{\Omega})\rightarrow
C([0,1])\times V^{}_{N_y}([0,1])$ interpolate only in the $y$-direction. Then clearly
\begin{subequations}
\label{eq:InterpID}
 \begin{equation}
  \label{eq:InterpID_a}
  I_{N_x,N_y}= I_{N_x,0} \circ I_{0,N_y} = I_{0,N_y} \circ I_{N_x,0},
 \end{equation}
 \begin{equation}
  \label{eq:InterpID_b}
  \frac{\partial}{\partial x}I_{N_x,N_y}= I_{0,N_y} \circ \frac{\partial}{\partial x}
  I_{N_x,0},
 \end{equation}
 \begin{equation}
  \label{eq:InterpID_c}
  \frac{\partial}{\partial y}I_{N_x,N_y}= I_{N_x,0} \circ \frac{\partial}{\partial y}
  I_{0,N_y}.
 \end{equation}
\end{subequations}

In this section we present an interpretation of the two-scale technique outlined 
in \cite{LMSZ09}. The two-scale interpolation operator
$\widehat{I}^{}_{N,N}:C(\bar{\Omega})\rightarrow V_{N,N}$ is defined as
\begin{equation}
\label{eq:2scaleinterp general}
 \widehat{I}^{}_{N,N}u= I^{}_{N, \sigma(N)} + I^{}_{\sigma(N), N} - I^{}_{\sigma(N), \sigma(N)},
\end{equation}
where $\sigma(N)$ is an integer that divides $N$. 
The following identity appears in \cite{LMSZ09}, and is an integral
component of the following two-scale interpolation analysis:
\begin{equation}
 \label{eq:2scale identity}
I^{}_{N,N}u- \widehat{I}^{}_{N,N}u = (I^{}_{N,0}-I^{}_{\sigma(N),0})(I^{}_{0,N}-I^{}_{0,\sigma(N)}). 
\end{equation}
\begin{theorem}
\label{thm:twoscaleinterp enorm}
 Suppose $\Omega=(0,1)^2$ and $u \in H^1_0(0,1)$. Let $I^{}_{N,N}u$ be the bilinear interpolant of 
 $u$ on $\Omega_{N,N}$, and $\widehat{I}^{}_{N,N}u$ be the two-scale bilinear interpolant
 of $u$ described in~\eqref{eq:2scaleinterp general}. Then there exists a constant $C$ independent 
 of $N$, such that
 \begin{equation*}
  \enorm{\widehat{I}^{}_{N,N}u - I^{}_{N,N}u} \leq C\sigma(N)^{-3}.
 \end{equation*}
\end{theorem}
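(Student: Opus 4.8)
The plan is to exploit the factored identity \eqref{eq:2scale identity}, which expresses the two-scale interpolation error as a product of two one-dimensional error operators, one acting in $x$ and one in $y$. Write $E_x := I^{}_{N,0} - I^{}_{\sigma(N),0}$ and $E_y := I^{}_{0,N} - I^{}_{0,\sigma(N)}$, so that $I^{}_{N,N}u - \widehat{I}^{}_{N,N}u = E_x E_y u$. The key observation is that each of $E_x$ and $E_y$ gains essentially two powers of the \emph{coarse} mesh width $1/\sigma(N)$ when measured in $L_2$, and one power when a derivative in the corresponding direction is taken; crucially, a derivative in the \emph{other} direction is harmless because $E_x$ and $E_y$ commute with the respective partial derivatives (as in \eqref{eq:InterpID_b}--\eqref{eq:InterpID_c}) and because interpolation operators are stable in $L_2$. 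Since $\enorm{\cdot}^2 = \|\nabla\cdot\|_{0,\Omega}^2 + \|\cdot\|_{0,\Omega}^2$, I must bound three quantities: $\|E_x E_y u\|_{0,\Omega}$, $\|\partial_x(E_x E_y u)\|_{0,\Omega}$, and $\|\partial_y(E_x E_y u)\|_{0,\Omega}$.

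First I would treat the $L_2$ term. Applying the one-dimensional bound of Theorem~\ref{eq:pwlininterpbound} in the $x$-variable to $E_x$ — noting that on each coarse interval $E_x = I^{}_{N,0} - I^{}_{\sigma(N),0}$ is itself an interpolation-type error with coarse step $\sigma(N)^{-1}$ — gives, for fixed $y$, a factor $C\sigma(N)^{-2}$ times an $L_2$-in-$x$ norm of $\partial_x^2(E_y u)$; then applying the same one-dimensional bound to $E_y$ in the $y$-variable yields another factor $C\sigma(N)^{-2}$ times $\|\partial_x^2\partial_y^2 u\|_{0,\Omega}$. This produces $\|E_x E_y u\|_{0,\Omega} \le C\sigma(N)^{-4}\|\partial_x^2\partial_y^2 u\|_{0,\Omega}$, which is more than enough. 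For the gradient terms, say $\|\partial_x(E_x E_y u)\|_{0,\Omega}$: using \eqref{eq:InterpID_b}-style commutation, $\partial_x(E_x E_y u) = (\partial_x E_x)(E_y u)$, and the derivative bound in Theorem~\ref{eq:pwlininterpbound} costs only one power, giving $C\sigma(N)^{-1}$ in $x$; then $E_y$ still contributes its full $C\sigma(N)^{-2}$ in $y$, for a total $C\sigma(N)^{-3}$. The $\partial_y$ term is symmetric. Taking the square root of the sum of squares of these three bounds gives $\enorm{\widehat{I}^{}_{N,N}u - I^{}_{N,N}u} \le C\sigma(N)^{-3}$, as claimed, the worst term being the two gradient contributions.

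The main obstacle is making the one-dimensional estimates rigorous for the \emph{difference} operator $E_x = I^{}_{N,0} - I^{}_{\sigma(N),0}$ rather than for a single interpolant, and tracking which mesh width controls the estimate: the fine interpolant $I^{}_{N,0}$ reproduces piecewise linears on the fine mesh, so $E_x u = (I^{}_{N,0} - \mathrm{Id})u - (I^{}_{\sigma(N),0} - \mathrm{Id})u$, and the dominant contribution is the coarse error $(I^{}_{\sigma(N),0} - \mathrm{Id})u$, which is what supplies the $\sigma(N)^{-2}$ (resp.\ $\sigma(N)^{-1}$) factor via Theorem~\ref{eq:pwlininterpbound}; the fine part contributes at the strictly smaller scale $N^{-2}$ (resp.\ $N^{-1}$) and is absorbed. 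A second, more technical point is the smoothness bookkeeping: the argument as sketched wants $u$ to have the mixed derivative $\partial_x^2\partial_y^2 u$ (and $\partial_x^2\partial_y u$, $\partial_x\partial_y^2 u$) in $L_2$, i.e.\ $u \in H^{2,2}_{\mathrm{mix}}$, which is the standard regularity hypothesis for sparse-grid estimates and should be stated; the bare hypothesis $u\in H^1_0$ in the theorem is presumably a typo for this mixed-Sobolev assumption, and I would phrase the proof accordingly while pointing to \eqref{eq:InterpID} for the commutation identities and to Theorem~\ref{eq:pwlininterpbound} for the one-dimensional building blocks. Finally, one applies the bound rectangle-by-rectangle on the coarse mesh and sums, using that the $L_2$ norm is additive over the mesh, so no constant blows up.
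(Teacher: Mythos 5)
Your proposal is correct and follows essentially the same route as the paper's proof: the factorization identity \eqref{eq:2scale identity}, the one-dimensional bounds of Theorem~\ref{eq:pwlininterpbound} applied once in each direction (two powers of $\sigma(N)^{-1}$ for the $L_2$ term, one plus two for each gradient term), and the commutation identities \eqref{eq:InterpID}. Your two added remarks --- handling the difference operator $I^{}_{N,0}-I^{}_{\sigma(N),0}$ by a triangle inequality with the coarse error dominating, and noting that the stated hypothesis $u\in H^1_0$ implicitly presupposes $\partial^4 u/\partial x^2\partial y^2 \in L_2(\Omega)$ --- are points the paper leaves tacit, and are consistent with its argument.
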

\begin{proof}
 First from~\eqref{eq:2scale identity}, and then by the first
 inequality in \eqref{eq:1Dinterpbounds}, 
 one has
 \begin{multline}
 \label{eq:twoscaleinterpL2}
  \|\widehat{I}^{}_{N,N}u - I^{}_{N,N}u\|_{0,\Omega} 
  =\|(I^{}_{N,0}-I^{}_{\sigma(N),0})(I^{}_{0,N}-I^{}_{0,\sigma(N)})u\|_{0,\Omega}\\
  \leq C\sigma(N)^{-2}\left\|(I^{}_{0,N}-I^{}_{0,\sigma(N)})\frac{\partial^2 u}{\partial x^2}
  \right\|_{0,\Omega}
  \leq C\sigma(N)^{-2}\sigma(N)^{-2}\left\|\frac{\partial^4 u}{\partial x^2 \partial y^2}
  \right\|_{0,\Omega}\\
  = C\sigma(N)^{-4}\left\|\frac{\partial^4 u}{\partial x^2 \partial y^2}\right\|_{0,\Omega}
  \leq C\sigma(N)^{-4}.
 \end{multline}
Following the same reasoning, but this time 
 using the second inequality in \eqref{eq:1Dinterpbounds},
\begin{equation}
\label{eq:grad2scaleinterpL2}
\left\|\frac{\partial}{\partial x}(\widehat{I}^{}_{N,N}u - I^{}_{N,N}u)\right\|_{0,\Omega}
 \leq C\sigma(N)^{-1}\left\|(I^{}_{0,N}-I^{}_{0,\sigma(N)})\frac{\partial^2u}{\partial x^2}
 \right\|_{0,\Omega}
 \leq C\sigma(N)^{-3}.
\end{equation}
Using the same approach, the corresponding bound on
$\|\partial/\partial y(\widehat{I}^{}_{N,N}u - I^{}_{N,N}u)\|_{0,\Omega}$
is obtained, and so
\[\|\nabla(\widehat{I}^{}_{N,N}u - I^{}_{N,N}u)\|_{0,\Omega} \leq C\sigma(N)^{-3}.\]
 To complete the proof, using the definition of the energy norm and the results~\eqref{eq:twoscaleinterpL2}
 and~\eqref{eq:grad2scaleinterpL2} one has
 \begin{multline*}
  \enorm{\widehat{I}^{}_{N,N}u - I^{}_{N,N}u} \leq \|\widehat{I}^{}_{N,N}u - I^{}_{N,N}u\|_{0,\Omega}
  + \|\nabla(\widehat{I}^{}_{N,N}u - I^{}_{N,N}u)\|_{0,\Omega}\\ \leq 
  C\sigma(N)^{-4}+C\sigma(N)^{-3}\leq C\sigma(N)^{-3}.
 \end{multline*}
\end{proof}

This result combined, via  the triangle inequality, with
\autoref{lem:uINN}, leads immediately to the following theorem.
\begin{theorem}
\label{thm:Twoscale u interp enorm}
Let $u$ and $\widehat{I}^{}_{N,N}$ be defined as in Theorem~\ref{thm:twoscaleinterp enorm}.
Then there exists a constant, $C$, independent of
 $N$, such that
\begin{equation*}
 \enorm{u-\widehat{I}^{}_{N,N}u} \leq C(N^{-1} + \sigma(N)^{-3}).
 \end{equation*}
\end{theorem}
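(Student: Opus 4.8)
The plan is to obtain the bound by a single application of the triangle inequality in the energy norm, splitting the error $u - \widehat{I}^{}_{N,N}u$ through the classical bilinear interpolant $I^{}_{N,N}u$. Concretely, I would write
\begin{equation*}
  \enorm{u-\widehat{I}^{}_{N,N}u}
  \leq \enorm{u - I^{}_{N,N}u} + \enorm{I^{}_{N,N}u - \widehat{I}^{}_{N,N}u},
\end{equation*}
and then bound the two terms on the right-hand side separately.

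For the first term, I would invoke Lemma~\ref{lem:uINN} directly (note $u \in H^2_0(\Omega)$ is implicit in the hypothesis inherited from Theorem~\ref{thm:twoscaleinterp enorm}, since the bounds there already required the relevant mixed derivatives of $u$ to be controlled), which gives $\enorm{u - I^{}_{N,N}u} \leq CN^{-1}$. For the second term, I would apply Theorem~\ref{thm:twoscaleinterp enorm} to get $\enorm{I^{}_{N,N}u - \widehat{I}^{}_{N,N}u} \leq C\sigma(N)^{-3}$. Adding the two estimates and using the convention that $C$ absorbs the (finitely many) constants involved yields $\enorm{u-\widehat{I}^{}_{N,N}u} \leq C(N^{-1} + \sigma(N)^{-3})$, which is the claim.

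I do not anticipate a genuine obstacle here: the argument is purely a triangle-inequality combination of two results already established in the excerpt, and the only point requiring any care is bookkeeping of the regularity assumptions on $u$ (making sure the hypotheses of Lemma~\ref{lem:uINN} and Theorem~\ref{thm:twoscaleinterp enorm} are simultaneously met) and the harmless relabelling of the generic constant $C$.
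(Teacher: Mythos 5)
Your argument is exactly the paper's: the theorem is obtained there by combining Theorem~\ref{thm:twoscaleinterp enorm} with Lemma~\ref{lem:uINN} via the triangle inequality through $I^{}_{N,N}u$, precisely as you propose. Your remark about keeping the regularity hypotheses consistent is sensible but raises no new issue beyond what the paper itself assumes.
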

\begin{remark}
 \label{rmk:Choice of sigma}
 We wish to choose  $\sigma(N)$ so that the sparse grid
 method is as
 economical as possible while still 
 retaining the accuracy of the classical scheme. 
 Based on the analysis of Theorem~\ref{thm:Twoscale u interp enorm}, 
 we take $\sigma(N)=N^{1/3}$.
\end{remark}

\begin{corollary}
\label{cor:cuberoot}
 Taking $\sigma(N)=N^{1/3}$ in Theorem~\ref{thm:Twoscale u interp enorm}, there exists a constant,
 $C$, such that
 \begin{equation*}
  \enorm{u-\widehat{I}^{}_{N,N}u} \leq CN^{-1}.
 \end{equation*}
\end{corollary}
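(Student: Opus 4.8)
The plan is to obtain this as an immediate specialization of Theorem~\ref{thm:Twoscale u interp enorm}. That theorem gives
\[
\enorm{u-\widehat{I}^{}_{N,N}u} \leq C\bigl(N^{-1} + \sigma(N)^{-3}\bigr),
\]
valid for any admissible choice of $\sigma(N)$ (an integer dividing $N$). The substitution prescribed in Remark~\ref{rmk:Choice of sigma}, namely $\sigma(N)=N^{1/3}$, makes the two terms in the bracket balance: $\sigma(N)^{-3} = (N^{1/3})^{-3} = N^{-1}$. Hence the bracket becomes $N^{-1}+N^{-1} = 2N^{-1}$, and absorbing the factor $2$ into the generic constant $C$ gives $\enorm{u-\widehat{I}^{}_{N,N}u} \leq CN^{-1}$, as claimed.

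So the proof is essentially one line; first I would invoke the preceding theorem, then substitute, then simplify. The only thing worth a remark is the tacit assumption that $N$ is chosen so that $N^{1/3}$ is an integer dividing $N$ (for instance $N$ a perfect cube, or more specifically a power of $8$ as is natural for the dyadic refinements used elsewhere in the paper); this is the standing convention for the sparse-grid parameter and does not affect the estimate. Since no genuine estimation is required beyond what Theorem~\ref{thm:Twoscale u interp enorm} already provides, there is no real obstacle here: the corollary is purely the bookkeeping step of optimizing the free parameter in the parent bound.

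If one wanted to be slightly more careful about the integrality constraint, the alternative is to take $\sigma(N) = \lceil N^{1/3} \rceil$ rounded to the nearest divisor of $N$, which changes $\sigma(N)$ by at most a bounded factor and therefore changes $\sigma(N)^{-3}$ by at most a bounded factor; this again gets swallowed by $C$. Either way the conclusion $\enorm{u-\widehat{I}^{}_{N,N}u} \leq CN^{-1}$ stands, matching the first-order accuracy \eqref{eq:enormGalerkinFEM} of the classical Galerkin method while using asymptotically fewer degrees of freedom.
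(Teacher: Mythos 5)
Your proposal is correct and is exactly the argument the paper intends: substitute $\sigma(N)=N^{1/3}$ into Theorem~\ref{thm:Twoscale u interp enorm} so that $\sigma(N)^{-3}=N^{-1}$, and absorb the resulting factor into the generic constant $C$. Your side remark about requiring $N^{1/3}$ to be an integer dividing $N$ (e.g.\ $N$ a perfect cube, as in the paper's numerical experiments) is a sensible clarification of the same tacit convention.
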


\subsection{Two-scale sparse grid finite element method}
Let $\psi_i^N(x)$ and $\psi_j^N(y)$ be defined as in~\eqref{eq:1Dbasis}. We now let
$\widehat{V}_{N,N} \subset H^1_0(\Omega)$ be the finite dimensional
space given by
\begin{equation}
\label{eq:two-scale space}
 \widehat{V}_{N,N} = 
 \text{span}\left\{\psi_i^N(x)\psi_j^{\sigma(N)}(y)\right\}^{i=1:N-1}_{j=1:\sigma(N)-1}
 +
 \text{span}\left\{\psi_i^{\sigma(N)}(x)\psi_j^{N}(y)\right\}^{i=1:\sigma(N)-1}_{j=1:N-1}. 
\end{equation}
Now the FEM is: \emph{find $\widehat{u}^{}_{N,N}\in \widehat{V}^{}_{N,N}$ such
  that}
\begin{equation}
 \label{eq:TwoscaleBilinearForm}
 B(\widehat{u}^{}_{N,N},v^{}_{N,N}) = (f,v^{}_{N,N}) 
 \quad \forall v^{}_{N,N} \in \widehat{V}^{}_{N,N}.
\end{equation}
Using the  reasoning that lead to \eqref{eq:enormGalerkinFEM}, 
\autoref{thm:Twoscale u interp enorm} leads to the
following result.
\begin{theorem}
\label{thm:two-scale}
 Let $u$ be the solution to~\eqref{eq:model}, and $\widehat{u}_{N,N}$ the 
 solution to~\eqref{eq:TwoscaleBilinearForm}. Then there exists a constant $C$,
 independent of $N$, such that
$\enorm{u-\widehat{u}_{N,N}} \leq C(N^{-1} + \sigma(N)^{-3})$.
 In particular, taking $\sigma(N)=N^{1/3}$, 
 \begin{equation*}
  \enorm{u-\widehat{u}^{}_{N,N}} \leq CN^{-1}.
 \end{equation*}
\end{theorem}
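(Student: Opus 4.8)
The plan is to mimic exactly the argument that produced the Galerkin estimate \eqref{eq:enormGalerkinFEM}, since the two-scale FEM \eqref{eq:TwoscaleBilinearForm} is again a Galerkin method posed over a finite-dimensional subspace $\widehat{V}_{N,N}\subset H^1_0(\Omega)$. First I would note that the bilinear form $B(\cdot,\cdot)$ is continuous and coercive on $H^1_0(\Omega)$ — this is recorded in Section~\ref{sec:analysis} and is independent of which subspace we restrict to — so the restricted problem \eqref{eq:TwoscaleBilinearForm} has a unique solution $\widehat{u}_{N,N}$, and moreover Cea's lemma (the quasi-optimal bound quoted in Section~\ref{sec:analysis}) gives
\[
\enorm{u-\widehat{u}_{N,N}} \leq C \inf_{\psi\in\widehat{V}_{N,N}} \enorm{u-\psi}.
\]

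The second step is to choose a concrete competitor $\psi$ in the infimum. The natural choice is the two-scale interpolant $\widehat{I}_{N,N}u$ of \eqref{eq:2scaleinterp general}, and here one needs the (easy but essential) observation that $\widehat{I}_{N,N}u$ actually lies in $\widehat{V}_{N,N}$: indeed $I_{N,\sigma(N)}u \in V_{N,\sigma(N)}$ and $I_{\sigma(N),N}u\in V_{\sigma(N),N}$, and the subtracted term $I_{\sigma(N),\sigma(N)}u$ lies in $V_{\sigma(N),\sigma(N)} \subset V_{N,\sigma(N)}\cap V_{\sigma(N),N}$, so the whole combination sits in the sum of spaces defining $\widehat{V}_{N,N}$ in \eqref{eq:two-scale space}. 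Then
\[
\enorm{u-\widehat{u}_{N,N}} \leq C\,\enorm{u-\widehat{I}_{N,N}u},
\]
and \autoref{thm:Twoscale u interp enorm} bounds the right-hand side by $C(N^{-1}+\sigma(N)^{-3})$, which is the first claim. Specialising to $\sigma(N)=N^{1/3}$ (as in Remark~\ref{rmk:Choice of sigma} and Corollary~\ref{cor:cuberoot}) makes $\sigma(N)^{-3}=N^{-1}$, yielding $\enorm{u-\widehat{u}_{N,N}}\leq CN^{-1}$.

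I do not expect a genuine obstacle here — the statement is essentially a corollary of \autoref{thm:Twoscale u interp enorm} via Cea's lemma, exactly as the text signals ("Using the reasoning that lead to \eqref{eq:enormGalerkinFEM}"). The only point requiring the smallest amount of care is the containment $\widehat{I}_{N,N}u\in\widehat{V}_{N,N}$, which hinges on $\sigma(N)$ dividing $N$ (so that the coarse meshes are nested in the fine one) — this is precisely why that divisibility hypothesis was imposed on $\sigma(N)$ in \eqref{eq:2scaleinterp general}. Everything else is a direct citation of results already established above.
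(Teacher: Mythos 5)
Your proposal is correct and follows essentially the same route the paper intends: quasi-optimality of the Galerkin projection onto $\widehat{V}_{N,N}$ combined with \autoref{thm:Twoscale u interp enorm}, then $\sigma(N)=N^{1/3}$, which is exactly what the text means by ``using the reasoning that lead to \eqref{eq:enormGalerkinFEM}''. Your added remark that $\widehat{I}_{N,N}u\in\widehat{V}_{N,N}$ because $\sigma(N)$ divides $N$ is a sensible explicit check that the paper leaves implicit.
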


\subsection{Implementation of the two-scale method}
\label{sec:two-scale implementation}

At first, constructing the linear system for the method
\eqref{eq:TwoscaleBilinearForm} may seem somewhat more daunting than
that for \eqref{eq:Galerkin}. For the classical method
\eqref{eq:Galerkin}, each of the $(N-1)^2$ rows in the 
system matrix has (at most) nine  non-zero entries, because each of the
basis functions shares support with only eight of its neighbours. 
For a general case, where $r$ in \eqref{eq:model} is not constant, and
so \eqref{eq:kron} cannot be used,  the matrix can be computed
either 
\begin{itemize}
\item using a 9-point stencil for each row, which incorporates a
  suitable quadrature rule for the reaction term, or
\item  by iterating over each square in the
mesh, to compute contributions from the four basis functions supported
by that square.
\end{itemize}
In contrast, for any choice of basis for the space 
\eqref{eq:two-scale space}, a single basis function will share support with
$\mathcal{O}(\sigma(N))$ others, so any stencil would be rather
complicated (this is clear from the sparsity pattern of a typical
matrix shown in \autoref{fig:spy2}).
Further, determining the contribution
from the $\mathcal{O}(\sigma(N))$ basis functions that have support on
a single square in a uniform mesh appears to be non-trivial. However,
as we shall see, one can borrow ideas from Multigrid methods to
greatly simplify the process by constructing the linear system from 
entries in the system matrix for the classical method.

We begin by choosing a basis for the space \eqref{eq:two-scale
  space}. This is not quite as simple as taking the union of the sets 
\[
\left\{\psi_i^N(x)\psi_j^{\sigma(N)}(y)\right\}^{i=1:N-1}_{j=1:\sigma(N)-1}
\quad \text{ and } \quad
\left\{\psi_i^{\sigma(N)}(x)\psi_j^{N}(y)\right\}^{i=1:\sigma(N)-1}_{j=1:N-1},
\]
since these two sets are not linearly independent. There are several
reasonable choices of a  basis for the
space. Somewhat arbitrarily, we shall opt for
\begin{equation}
\label{eq:two-scale basis}
\left\{\psi_i^N(x)\psi_j^{\sigma(N)}(y)\right\}^{i=1:N-1}_{j=1:\sigma(N)-1}
\cup 
\left\{\psi_i^{\sigma(N)}(x)\psi_j^{N}(y)
\right\}^{i=1:\sigma(N)-1}_{j=(1:N-1)/(\sigma(N):\sigma(N):N-\sigma(N))}.
\end{equation}
This may be interpreted as taking the union of the usual bilinear
basis functions for an $N \times \sigma(N)$ mesh, and a $\sigma(N)\times
N$ mesh; but from the second of these, we omit any basis functions
associated with nodes found in the first mesh.
For example, if $N=27$, these basis functions can be considered to be
defined on the meshes shown in \autoref{fig:two-scale meshes}, where
each black dot represents the centre of a bilinear basis function that
has support on the  four adjacent rectangles.
\begin{figure}[h!]
\centering
\begin{subfigure}[h]{0.49\textwidth}
\centering
\includegraphics[scale=0.83]{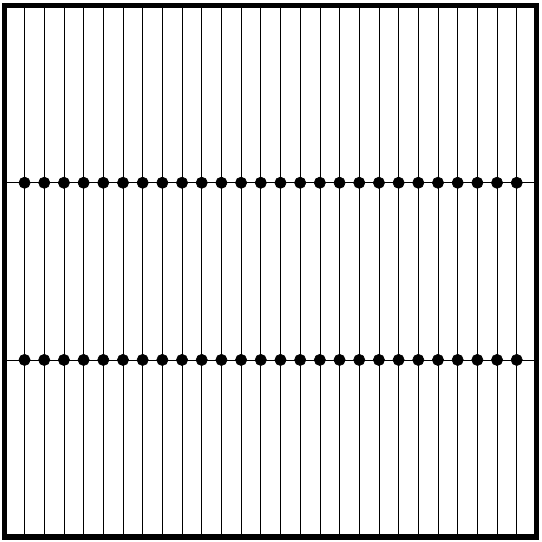}
\caption{A grid for $\left\{\psi_i^{27}(x)\psi_j^{3}(y)\right\}^{i=1:26}_{j=1:2}$}
\label{fig:x-grid}
\end{subfigure}
\begin{subfigure}[h]{0.49\textwidth}
\centering
\includegraphics[scale=0.83]{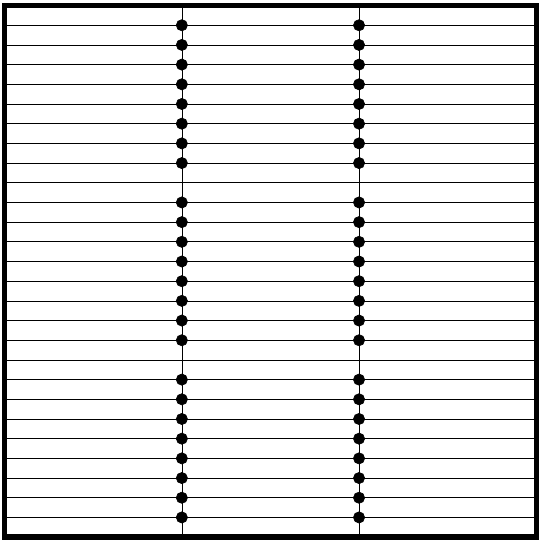}
\caption{A grid for $\left\{\psi_i^{3}(x)\psi_j^{27}(y)\right\}^{i=1:2}_{j=(1:26)/\{9,18\}}$}
\label{fig:y-grid}
\end{subfigure}
\caption{Meshes for the two-scale FEM, for $N=27$}
\label{fig:two-scale meshes}
\end{figure}

To see how to form the linear system associated with the basis
\eqref{eq:two-scale basis} from the entries in \eqref{eq:kron}, we
first start with a one-dimensional problem. Let $V_{\sigma(N)}$ be the
space of piecewise linear functions defined on the mesh
$\omega_x^{\sigma(N)}$, and which vanishes at the end-points. So any
$v\in V_{\sigma(N)}$ can be expressed as 
\[
v(x) = \sum_{i=1:\sigma(N)-1} v_i \psi_i^{\sigma(N)}(x).
\]
Suppose we want to project $v$ onto the space $V_N$. That is, we wish
to find coefficients $w_1^{}, \dots, w_{N-1}^{}$ so that we can write the
same $v$ as 
\[
v(x) = \sum_{i=1:N-1} w_i \psi_i^{N}(x).
\]
Clearly, this comes down to finding an expression for the $\psi_i^{\sigma(N)}$
in terms of the $\psi_i^{N}$. Treating the coefficients
$(v_1,\dots,v_{\sigma(N)})$ and 
$(w_1,\dots,w_{N})$ as vectors, the projection between the
corresponding spaces can be expressed as a $(N-1)\times(\sigma(N)-1)$
matrix. This matrix can be constructed in one line in MATLAB:
\listbox{.74}
\begin{lstlisting}
p = sparse(interp1(x2, eye(length(x2)), x));
\end{lstlisting}
where $x$ is a uniform mesh on $[0,1]$ with $N$ intervals, and 
$x_2$ is a uniform mesh on $[0,1]$ with $N_2$ intervals, where $N_2=\sigma(N)$
is a proper divisor of $N$. 

To extend this to two-dimensions, we form a matrix $P_1$ that projects
a bilinear function expressed in terms of the basis functions in 
\[
\left\{\psi_i^N(x)\psi_j^{\sigma(N)}(y)\right\}^{i=1:N-1}_{j=1:\sigma(N)-1}
\]
to one in $V_{N,N}$ using 
\listbox{.7}
\begin{lstlisting}
P1 = kron(p(2:end-1,2:end-1), speye(N-1));
\end{lstlisting}
Next we construct the matrix $P_2$ that projects
a bilinear function expressed in terms of the basis functions in 
\[
\left\{\psi_i^{\sigma(N)}(x)\psi_j^{N}(y)\right
\}^{i=1:\sigma(N)-1}_{j=(1:N-1)/(\sigma(N):\sigma(N):N-\sigma(N))},
\]
to one in $V_{N,N}$. Part of this process involves identifying the
nodes in $\omega_x^{N}$ which are not contained in
$\omega_x^{\sigma(N)}$. Therefore, we use two lines of MATLAB code to
form this projector:
\listbox{.95}
\begin{lstlisting}
UniqueNodes=sparse(setdiff(1:(N-1), N/N2:N/N2:N-N/N2));
P2 = kron(sparse(UniqueNodes,1:length(UniqueNodes),1),...
    p(2:end-1,2:end-1));
\end{lstlisting}
The actual projector we are looking for is now formed by concatenating
the arrays $P_1$ and $P_2$. That is, we set $P=(P_1|P_2)$. For more
details, see the MATLAB function 
\href{http://www.maths.nuigalway.ie/~niall/SparseGrids/TwoScale_Projector.m}%
{\texttt{TwoScale\_Projector.m}}. 
In a Multigrid setting, $P$ would be referred to as an
\emph{interpolation} or \emph{prolongation} operator, and $P^T$ is
known as a \emph{restriction} operator; see, e.g., \cite{BrHe00}.
It should be noted that, although
our simple construction of the system matrix for the classical
Galerkin method relies on the coefficients in the right-hand side of
\eqref{eq:model}
 being constant, the approach for generating $P$ works in the general case of 
variable coefficients. Also, the use of the MATLAB \texttt{interp1}
function means that modifying the code for a non-uniform mesh is
trivial.

Equipped with the matrix $P$, if the linear system for the 
Galerkin method is $A u_{N,N} = b$, then the linear system for the
two-scale method is
$(P^T A P)\widehat{u}_{N,N} = P^T b$. 
The sparsity pattern of $P^TAP$ is shown in \autoref{fig:spy2}.
The solution
can  be projected back onto the original space $V_{N,N}$ by
evaluating $P\widehat{u}_{N,N}$.

\begin{figure}[!h]
 \begin{center}
\includegraphics[width=4.5cm]{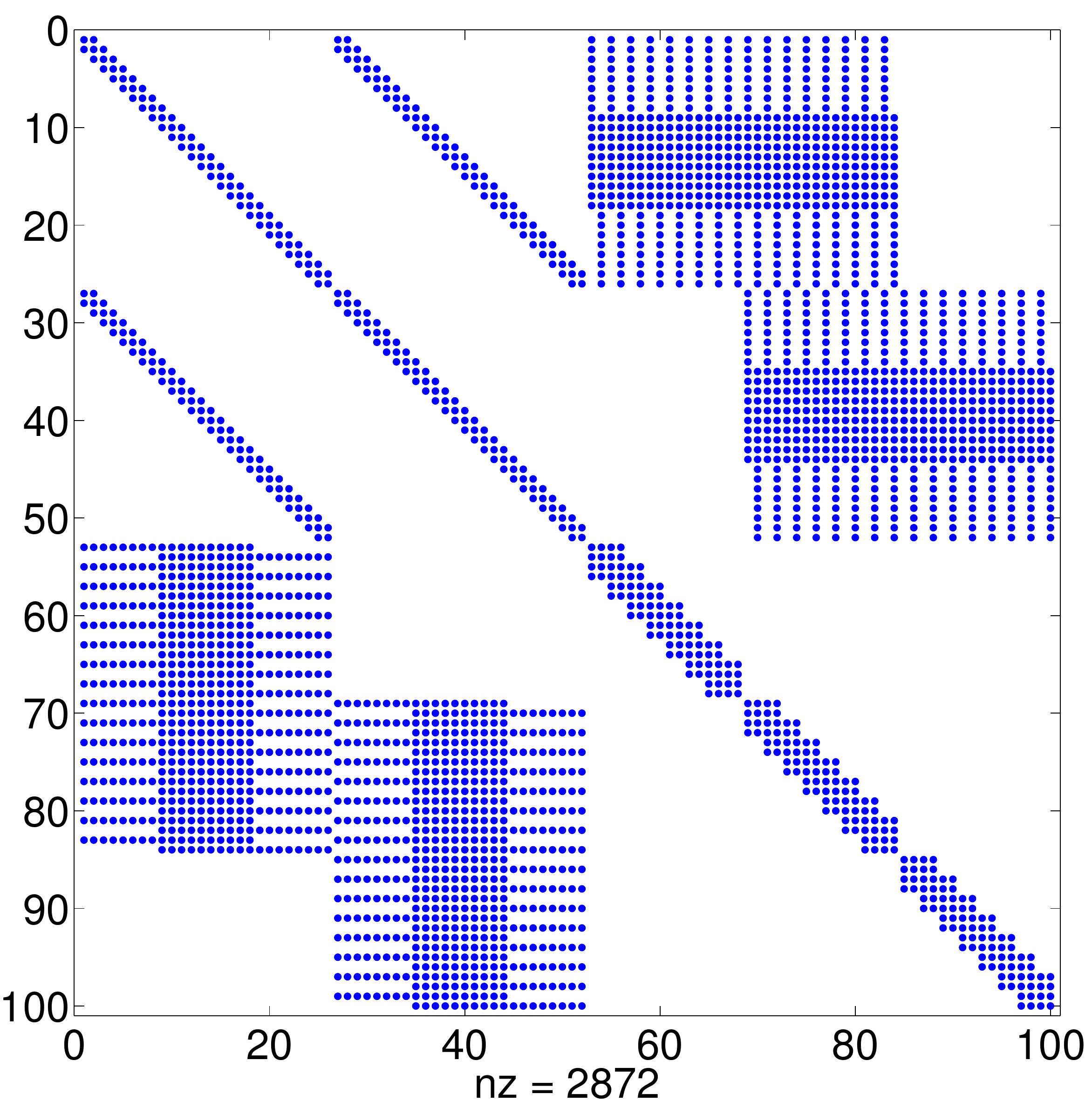} ~
\includegraphics[width=4.5cm]{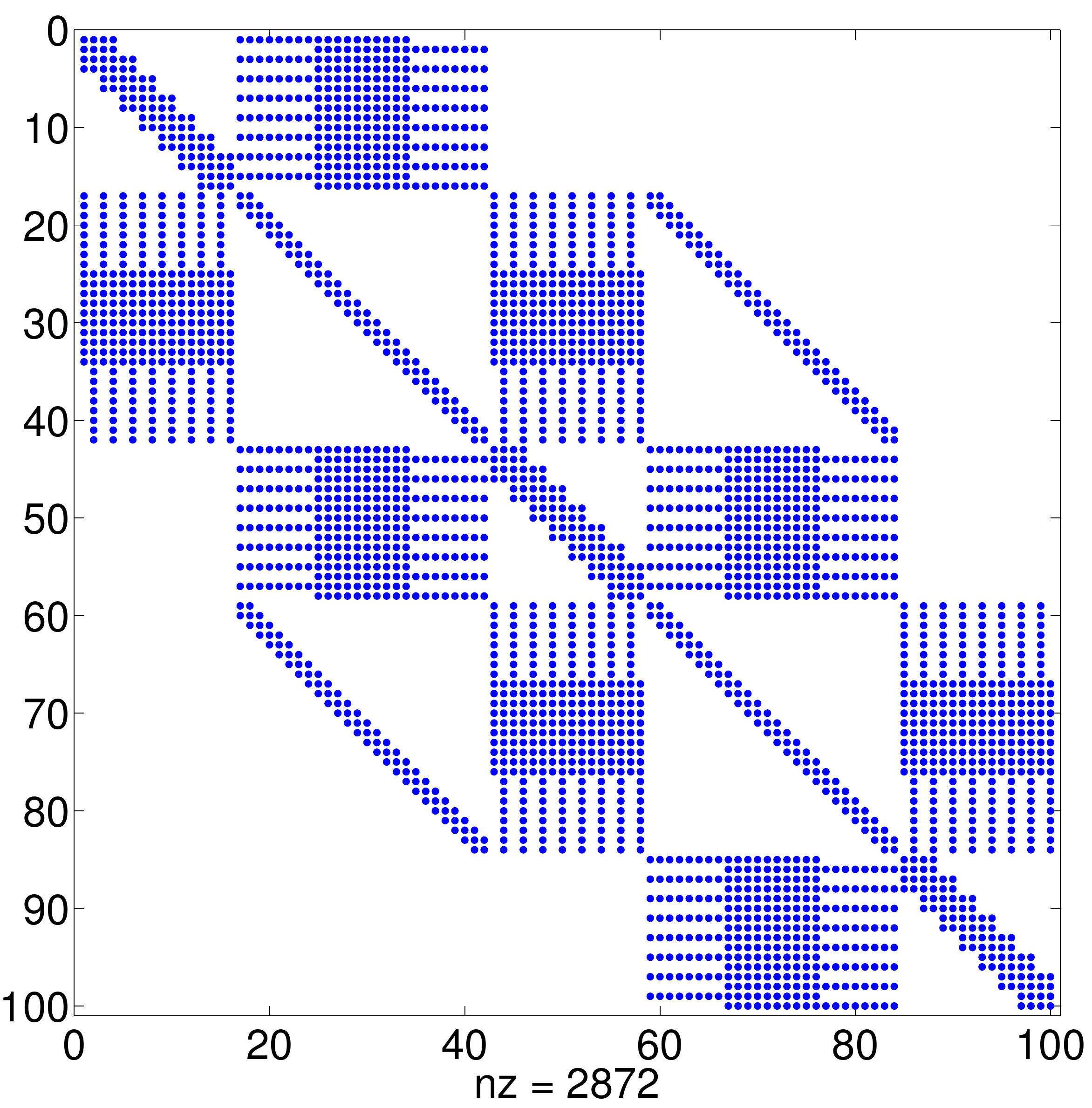}
 \caption{Sparsity patterns  for the two-scale
   method with $N=27$. Left: ordered as in \eqref{eq:two-scale basis}.
   Right: lexicographical ordering}
\label{fig:spy2}
\end{center}
\end{figure}

To use the test harness,
\href{http://www.maths.nuigalway.ie/~niall/SparseGrids/Test_FEM.m}{\texttt{Test\_FEM.m}},
to implement the two-scale  method for our test problem, set the variable \texttt{Method} to
\texttt{two-scale} on Line 18. 

In  \autoref{fig:TwoScale ROC} 
we present computed errors for
various values of $N$, that are chosen, for simplicity, 
to be perfect cubes, which demonstrates that
Theorem \ref{thm:two-scale} holds
in practice: the method is indeed first-order convergent in the energy
norm. Moreover, the errors for the two-scale method are very similar
to  those of the classical method (the difference is to the order of
1\%) even though far fewer degrees of freedom are used. For
example, when $N=2^{12}$ the classical FEM has 16,769,025 
degrees of freedom, compared with 122,625 for the two-scale method. 
However, comparing  \autoref{fig:FEM error 64} and
\autoref{fig:TwoScale error 64}, we see that the nature of the
point-wise errors are very different. 
We defer further comparisons
between the methods to Section \ref{sec:good times}, where 
efficiency of the various  methods is discussed in detail.

\begin{figure}[h!]
\centering
\begin{subfigure}[h]{0.49\textwidth}
\centering
\includegraphics[scale=.35]{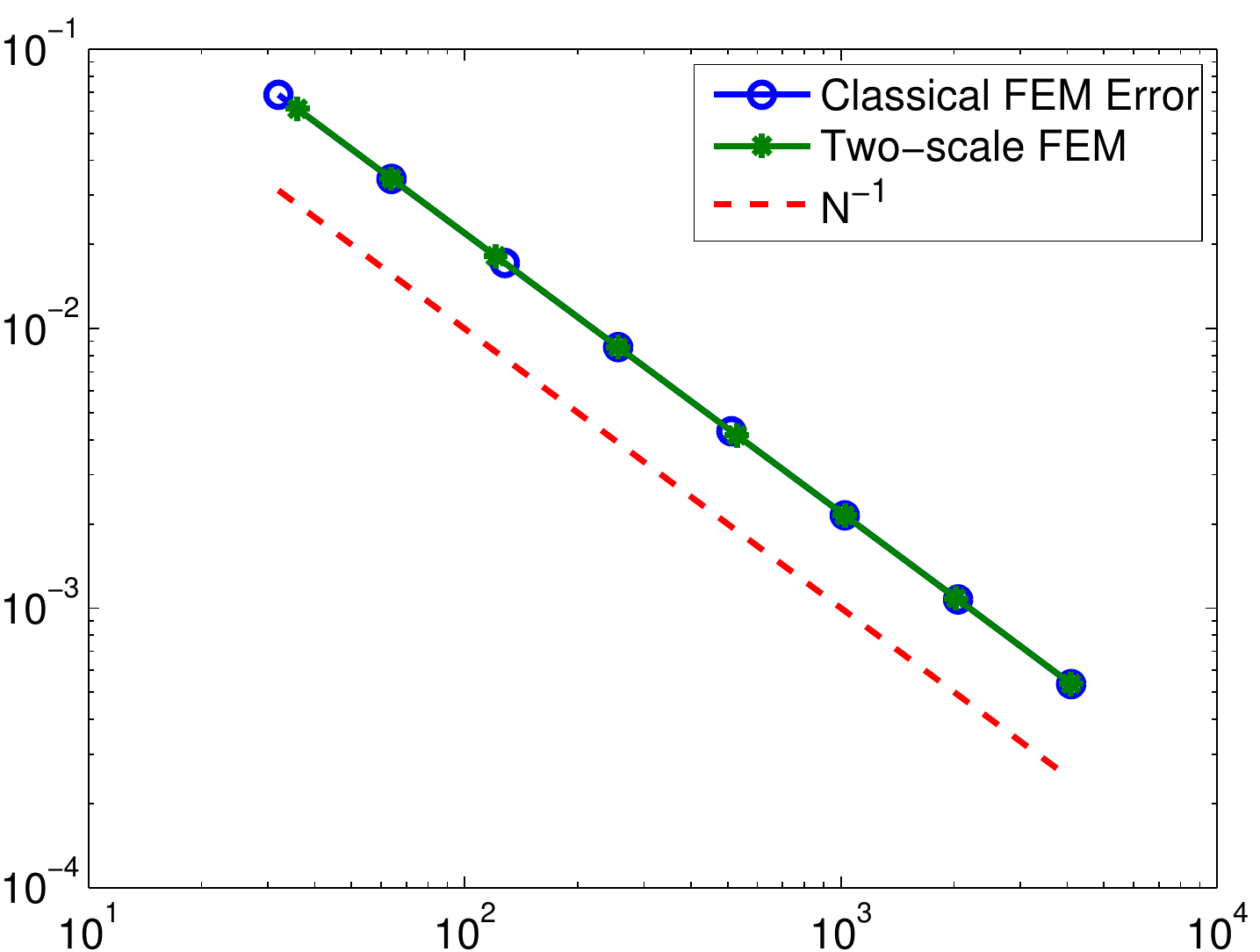}%
\caption{Rate of convergence}
\label{fig:TwoScale ROC}
\end{subfigure}
\begin{subfigure}[h]{0.49\textwidth}
\centering
\includegraphics[scale=.35]{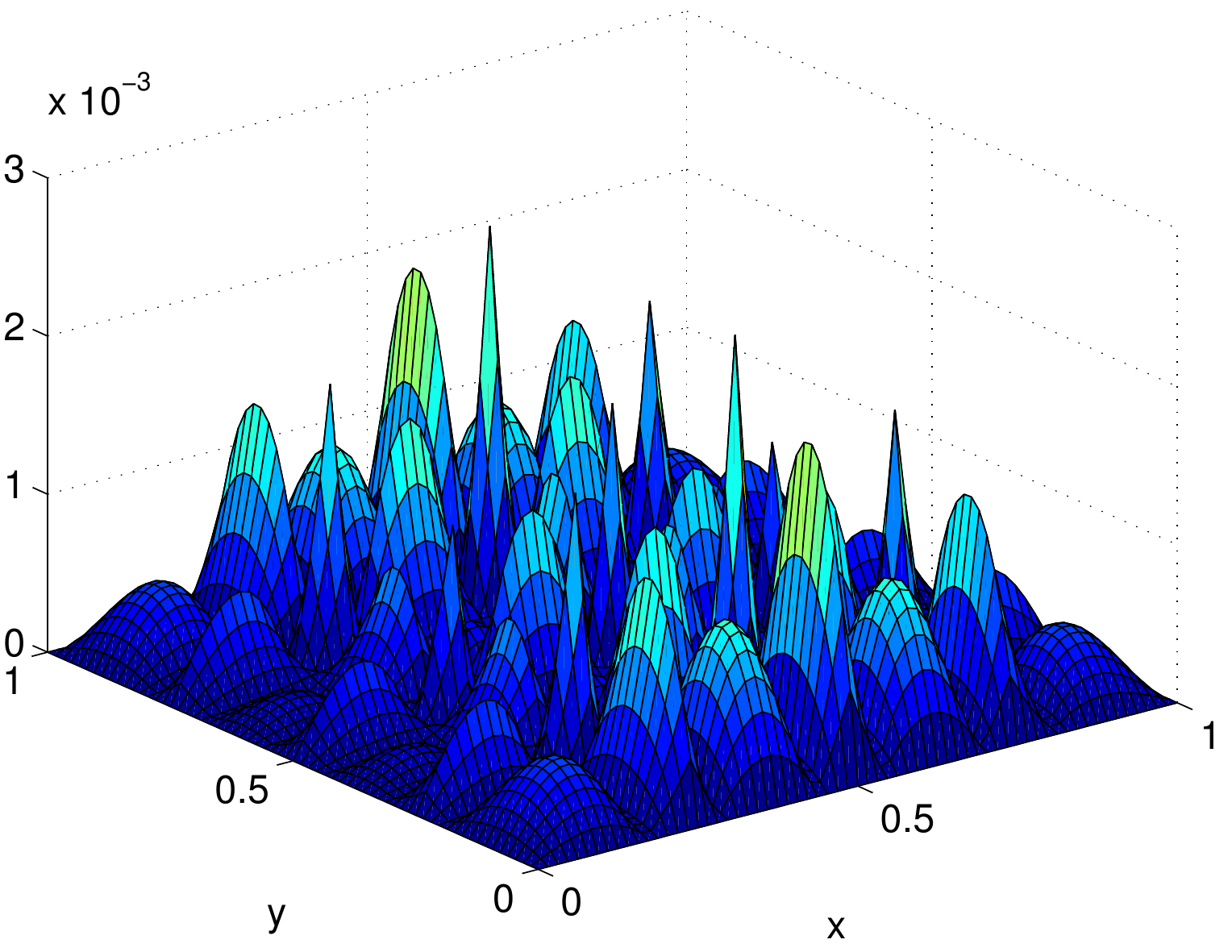}
\caption{$u-\widehat{u}_{64,64}^{}$}
\label{fig:TwoScale error 64}
\end{subfigure}
\caption{Left:  convergence of the classical and two-scale
  methods. Right: error in the  two-scale solution with $N=64$
  (right)}
\label{fig:TwoScale error}
\end{figure}

\section[multiscale sparse grid FEM]{A multiscale sparse grid finite element method}
\label{sec:multiscale}
We have seen that the two-scale method can match the accuracy of the
classical FEM, even though only $\mathcal{O}(N^{4/3})$ degrees of
freedom are used, rather than $\mathcal{O}(N^{2})$. We shall now see
that it is possible to further reduce the required number of degrees
of freedom to $\mathcal{O}(N \log N)$, again without sacrificing the
accuracy of the method very much.
The approach we present is equivalent, up to the choice of basis, to
the hierarchical sparse grid method described by, for example,
Bungartz and Griebel~\cite{BuGr04}. But, because we present it as a
generalisation of the two-scale method, we like to refer to it as the
``multiscale'' method. 

Informally, the idea can be summarised in the following way. Suppose
that $N=2^k$ for some $k$.
For the two-scale method, we solved the problem (in a sense) on two overlapping
grids: one with 
 $N \times N^{1/3}$ intervals, and one with $N^{1/3} \times N$.
Instead, we could apply the same algorithm, but on grids with
  $N \times (N/2)$  and $(N/2) \times  N$ intervals respectively.
Next we apply this same two-scale approach to each of these two grids,
giving three overlapping grids with 
$N \times (N/4)$,  $(N/2) \times (N/2)$,   and $(N/4) \times  N$
intervals.
The process is repeated recursively, until the coarsest grids have $2$
intervals in one coordinate direction --- the smallest feasible
number.

More rigorously, we begin in Section \ref{sec:multiscale interp} by
constructing a multiscale interpolant, using an approach the authors
presented in \cite[\S 3.1]{RuMa14}. This is then analysed in
Section \ref{sec:multiscale analysis}.
As with the two-scale method,
this leads to a FEM, described in \ref{sec:multiscale FEM}.
In Section \ref{sec:multiscale implementation} we
show how this can be programmed, and present numerical results for our
test problem. 

Throughout the analysis, we use the following identities, which are
easily established using, for example, inductive arguments.
\begin{lemma}
 \label{lem:geometric}
 For any $k\geq 1$ we have the following identities
\begin{equation*}
 \begin{split}
  \displaystyle\sum\limits_{i=0}^{k-1}2^i = 2^k-1,\qquad \text{ and } \qquad
  \displaystyle\sum\limits_{i=0}^{k-1}2^{-i} = 2-2^{1-k}.
 \end{split}
\end{equation*}
\end{lemma}

\subsection{The multiscale interpolant}
\label{sec:multiscale interp}
Let $I^{}_{N,N}$ denote the piecewise bilinear interpolation operator on
$V^{}_{N,N}$. Consider the following two-scale
interpolation technique~\eqref{eq:2scaleinterp general}:
\begin{equation}
\label{eq:two scale}
  I^{(1)}_{N,N}= I^{}_{N,\sigma(N)}+I^{}_{\sigma(N),N} - I^{}_{\sigma(N),\sigma(N)},
\end{equation}
where $\sigma(N)$ is an integer that divides $N$. In particular, in
Corollary~\ref{cor:cuberoot}, $\sigma(N)=CN^{1/3}$ is presented as a suitable choice.
However, the same approach can be applied to, say, the terms
$I^{}_{N,\sigma(N)}$ and $I^{}_{\sigma(N),N}$, and again recursively
to the terms that emerge from that.
Following the  approach that is standard for multiscale sparse grid
methods, we  let $\sigma(N)= N/2$.

Let $I_{N_x,N_y}$ denote the piecewise bilinear interpolation operator on
$V^{}_{N_x,N_y}$. The corresponding  Level 1  operator is
\begin{subequations}
\begin{equation}
\label{eq:level 1}
  I^{(1)}_{N_x,N_y}:= I^{}_{N_x,\frac{N_y}{2}}+I^{}_{\frac{N_x}{2},N_y} -
  I^{}_{\frac{N_x}{2},\frac{N_y}{2}}.
\end{equation}
The positively signed terms of~\eqref{eq:level 1} are associated with
spaces of dimension $\Oh(N^2/2)$ while the negatively signed term is associated with 
a space of dimension $\Oh(N^2/4)$.
The Level 2  operator is obtained by applying the Level 1
operator to the positive terms in \eqref{eq:level 1}, giving
\begin{equation}
\label{eq:level 2}
  I^{(2)}_{N,N} =
 I^{(1)}_{N,\frac{N}{2}}+I^{(1)}_{\frac{N}{2},N} -
 I^{}_{\frac{N}{2},\frac{N}{2}}
=
 I^{}_{N,\frac{N}{4}}+ I^{}_{\frac{N}{2},\frac{N}{2}} +I^{}_{\frac{N}{4},N}
 - I^{}_{\frac{N}{2},\frac{N}{4}} - I^{}_{\frac{N}{4},\frac{N}{2}}.
\end{equation}
\end{subequations}
Note that the right-hand side of this expression features three
(positively signed) operators
that map to subspaces of dimension $\Oh(N^2/4)$, and two (negatively
signed) operators that map to subspaces of dimension $\Oh(N^2/8)$.
To obtain the  Level 3 operator, we again apply the Level 1 operator
to the positive terms in \eqref{eq:level 2}, because they are the ones
associated with the larger spaces.
In general, the Level $k$ operator is obtained by applying the Level 1
operator of \eqref{eq:level 1} to the positive terms in
$I^{(k-1)}_{N, N}$. This leads to the following definition.

\begin{definition}[Multiscale Interpolation Operator]
\label{def:multiscale}
Let $I^{(0)}_{N,N}= I^{}_{N,N}$ and, from \eqref{eq:level 1},  let
$ I^{(1)}_{N,N}= I^{}_{N,\frac{N}{2}} + I^{}_{\frac{N}{2},N} - I^{}_{\frac{N}{2},\frac{N}{2}}$.
For  $k=2, 3, \dots$, let $I^{(k)}_{N,N}$ be  obtained by
applying the Level 1  operator in \eqref{eq:level 1} to the positively
signed terms in $I^{(k-1)}_{N,N}$.
\end{definition}

We now provide an explicit formula for $I^{(k)}_{N,N}$. This recovers a
standard expression used, for example, in the
 combination technique outlined in~\cite{PfZh99}.
\begin{lemma}
\label{lem:recursive}
Let $I^{(k)}_{N,N}$ be the multiscale  interpolation
  operator constructed in  Definition \ref{def:multiscale} above. Then
 \begin{align}
 \label{eq:multiscaleinterp}
 I^{(k)}_{N,N} = \displaystyle\sum\limits_{i=0}^{k}I^{}_{\frac{N}{2^i},\frac{N}{2^{k-i}}}
 - \displaystyle\sum\limits_{i=1}^{k}I^{}_{\frac{N}{2^i}, \frac{N}{2^{k+1-i}}},
 \qquad \text{ for  } k =0,1,2,\dots.
\end{align}
\end{lemma}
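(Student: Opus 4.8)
The natural approach is induction on $k$, following exactly the recursive construction in Definition~\ref{def:multiscale}. The base cases $k=0$ and $k=1$ are immediate: for $k=0$ the claimed formula reduces to $I^{}_{N,N}=I^{(0)}_{N,N}$, and for $k=1$ it reads $I^{}_{N,N/2}+I^{}_{N/2,N}-I^{}_{N/2,N/2}$, which is precisely the Level~1 operator. So the work is in the inductive step.

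\medskip

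\noindent First I would record precisely what ``applying the Level~1 operator to the positively signed terms'' does to a single term $I^{}_{N/2^a,\,N/2^b}$: by \eqref{eq:level 1} it replaces it with $I^{}_{N/2^a,\,N/2^{b+1}} + I^{}_{N/2^{a+1},\,N/2^b} - I^{}_{N/2^{a+1},\,N/2^{b+1}}$, i.e. the first index is (optionally) incremented, the second index is (optionally) incremented, with the sign being $+$ when exactly one index is incremented and $-$ when both are. Assuming \eqref{eq:multiscaleinterp} for $k-1$, the positively signed terms are exactly $I^{}_{N/2^i,\,N/2^{k-1-i}}$ for $i=0,\dots,k-1$; applying the replacement to each of these and leaving the negatively signed terms $-I^{}_{N/2^i,\,N/2^{k-i}}$ ($i=1,\dots,k-1$) untouched, I would collect the resulting terms by their index pair. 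The claim is that after cancellation this equals $\sum_{i=0}^{k}I^{}_{N/2^i,\,N/2^{k-i}} - \sum_{i=1}^{k}I^{}_{N/2^i,\,N/2^{k+1-i}}$.

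\medskip

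\noindent The bookkeeping is the crux. A clean way to organize it: classify every term appearing (before cancellation) by the sum $s$ of its two exponents. The newly generated positive terms from $I^{}_{N/2^i,N/2^{k-1-i}}$ have exponent-sum $s=k$ (when one index is bumped) — these are the $I^{}_{N/2^i,N/2^{k-i}}$ and $I^{}_{N/2^{i+1},N/2^{k-1-i}}$ — and the new negative terms have exponent-sum $s=k+1$. The retained old negative terms $-I^{}_{N/2^i,N/2^{k-i}}$ also have $s=k$. So at level $s=k$ one must check that the net coefficient of each pair $(i,k-i)$ for $i=0,\dots,k$ is $+1$: the pair $(i,k-i)$ arises as a ``bump the second index'' image of $(i,k-1-i)$ and as a ``bump the first index'' image of $(i-1,k-i)$, contributing $+2$ in the interior, while the retained old $-I^{}_{N/2^i,N/2^{k-i}}$ (present for $1\le i\le k-1$) subtracts $1$, and at the two ends $i=0$ and $i=k$ only one of the two ``bump'' images exists and there is no old negative term — so the count is $+1$ throughout. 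At level $s=k+1$ one checks that the new negative terms are exactly $-I^{}_{N/2^{i+1},N/2^{k-i}}$ for $i=0,\dots,k-1$, i.e. $-I^{}_{N/2^j,N/2^{k+1-j}}$ for $j=1,\dots,k$, each with coefficient $1$, matching the second sum in \eqref{eq:multiscaleinterp}.

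\medskip

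\noindent The main obstacle is purely the combinatorial accounting — making the end-effects ($i=0$ and $i=k$, where one of the two contributing ``bump'' operations falls off the range) transparent enough that the cancellation is visibly correct rather than a leap of faith. I would handle this by writing the level-$k$ positive block as two shifted copies of the index range and noting that their ``overlap'' (indices $1,\dots,k-1$) is exactly where the inductive negative terms sit, so the $+2-1=+1$ bookkeeping in the interior and $+1$ at the endpoints is manifest. A one-line inductive sanity check against Lemma~\ref{lem:geometric} — the total number of positively signed terms is $k+1$ and negatively signed is $k$, consistent with the recursion adding one of each per level — confirms no terms are lost. This completes the induction.
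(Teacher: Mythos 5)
Your proposal is correct and follows essentially the same route as the paper's proof: induction on $k$, applying the Level~1 operator \eqref{eq:level 1} to the positively signed terms of the level-$(k-1)$ formula and cancelling against the retained negative terms, with your coefficient-counting by exponent sum being just a reorganization of the paper's reindexing of the sums.
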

\begin{proof}
It is easy to check that the formula \eqref{eq:multiscaleinterp} is
consistent with the construction given in Definition \ref{def:multiscale}  for $k=0$ and $k=1$.

Next, assume that (\ref{eq:multiscaleinterp}) holds for an
arbitrary  $k=n$; that is,
\begin{equation}
\label{eq:multiinductn}
 I^{(n)}_{N,N} = \displaystyle\sum\limits_{i=0}^{n}I^{}_{\frac{N}{2^i},\frac{N}{2^{n-i}}}
 - \displaystyle\sum\limits_{i=1}^{n}I^{}_{\frac{N}{2^i}, \frac{N}{2^{n+1-i}}}.
 \end{equation}
Following Definition \ref{def:multiscale}, apply the Level 1 operator
\eqref{eq:level 1} to each term in the first sum of the
right-hand side of~(\ref{eq:multiinductn}). This gives
\begin{multline*}
I_{N,N}^{(n+1)}=
 \displaystyle\sum\limits_{i=0}^n \left[ I^{}_{\frac{N}{2^{i}},\frac{N}{2^{n+1-i}}} +
 I^{}_{\frac{N}{2^{i+1}},\frac{N}{2^{n-i}}} - I^{}_{\frac{N}{2^{i+1}},\frac{N}{2^{n+1-i}}} \right]
 - \displaystyle\sum\limits_{i=1}^n
 I^{}_{\frac{N}{2^i},\frac{N}{2^{n+1-i}}}\\
 =  I^{}_{\frac{N}{2^{n+1}},N} + \displaystyle\sum\limits_{i=0}^n \left[
 I^{}_{\frac{N}{2^i},\frac{N}{2^{n+1-i}}} - I^{}_{\frac{N}{2^{i+1}},\frac{N}{2^{n+1-i}}} \right]\\
 = \displaystyle\sum\limits_{i=0}^{n+1} I^{}_{\frac{N}{2^i},\frac{N}{2^{n+1-i}}}
 - \displaystyle\sum\limits_{i=1}^{n+1} I^{}_{\frac{N}{2^i},\frac{N}{2^{n+2-i}}}.
\end{multline*}
That is, \eqref{eq:multiscaleinterp} holds for  $k=n+1$, as
required.
\end{proof}

Although \eqref{eq:multiscaleinterp} is a succinct representation of
the multiscale interpolation operator, for the purposes of analysis
we are actually interested in the difference between operators at successive levels.
First consider the difference between the interpolation  operators at Levels 0
and 1:
\begin{align*}
 I^{(0)}_{N,N}-I^{(1)}_{N,N}= I^{}_{N,N}-I^{}_{N,\frac{N}{2}}
 -I^{}_{\frac{N}{2},N}+I^{}_{\frac{N}{2},\frac{N}{2}}.
\end{align*}
Recalling \eqref{eq:InterpID_a} and \eqref{eq:2scale identity}, this expression can be written as 
\begin{multline}\label{eq:split}
  I^{(0)}_{N,N}-I^{(1)}_{N,N}= (I^{}_{N,0}\circ I^{}_{0,N}) -
  (I^{}_{N,0}\circ I^{}_{0,\frac{N}{2}}) 
  -(I^{}_{\frac{N}{2},0}\circ I^{}_{0,N})+ (I^{}_{\frac{N}{2},0}\circ I^{}_{0,\frac{N}{2}})\\
 = \left(I^{}_{N,0}-I^{}_{\frac{N}{2},0}\right)\left(I^{}_{0,N} -
 I^{}_{0,\frac{N}{2}}\right),
\end{multline}
an identity used repeatedly in \cite{LMSZ09}.
One can  derive a similar expression 
$I^{(1)}_{N,N}-I^{(2)}_{N,N}$:
\begin{gather*}
 I^{(1)}_{N,N}-I^{(2)}_{N,N}= I^{}_{N,\frac{N}{2}} + I^{}_{\frac{N}{2},N} - I^{}_{\frac{N}{2},\frac{N}{2}}
 -I^{}_{N,\frac{N}{4}}- I^{}_{\frac{N}{2},\frac{N}{2}}- I^{}_{\frac{N}{4},N}
 +I^{}_{\frac{N}{2},\frac{N}{4}}+ I^{}_{\frac{N}{4},\frac{N}{2}}.
\end{gather*}
Invoking~(\ref{eq:InterpID_a}) and simplifying we get
\begin{align*}
 I^{(1)}_{N,N}-I^{(2)}_{N,N}=& (I^{}_{N,0}\circ I^{}_{0,\frac{N}{2}})+ (I^{}_{\frac{N}{2},0}\circ I^{}_{0,N})
 - (I^{}_{\frac{N}{2},0}\circ I^{}_{0,\frac{N}{2}})-(I^{}_{N,0}\circ I^{}_{0,\frac{N}{4}})\\
 &- (I^{}_{\frac{N}{2},0}\circ I^{}_{0,\frac{N}{2}})- (I^{}_{\frac{N}{4},0}\circ I^{}_{0,N})
 + (I^{}_{\frac{N}{2},0}\circ I^{}_{0,\frac{N}{4}}) + (I^{}_{\frac{N}{4},0}\circ I^{}_{0,\frac{N}{2}}),\\
&=
  \left(I^{}_{N,0}- I^{}_{\frac{N}{2},0}\right)\left(I^{}_{0,\frac{N}{2}}-
 I^{}_{0,\frac{N}{4}}\right)
		 + \left(I^{}_{\frac{N}{2},0}-I^{}_{\frac{N}{4},0}\right)\left(I^{}_{0,N}-
		 I^{}_{0,\frac{N}{2}}\right).
\end{align*}

We now give the general form for the representation of the difference
between operators at successive levels in terms of one-dimensional
operators. The resulting identity is  an important tool in our analysis
of the bound on the error for the multiscale
interpolation operator.

\begin{lemma}
\label{lem:multiopk_k1}
Let $I^{(k)}_{N,N}$ be the multiscale interpolation operator defined in~(\ref{eq:multiscaleinterp}). Then,
for $k=0,1,2,\dots$,
\begin{equation}
\label{eq:multiopkk1}
 I^{(k-1)}_{N,N}-I^{(k)}_{N,N} = \displaystyle\sum\limits_{i=0}^{k-1} \left(I^{}_{\frac{N}{2^i},0}
 - I^{}_{\frac{N}{2^{i+1}},0}
 \right)\left(I^{}_{0,\frac{N}{2^{k-1-i}}}- I^{}_{0,\frac{N}{2^{k-i}}}\right).
\end{equation}
\end{lemma}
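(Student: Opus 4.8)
The plan is to establish \eqref{eq:multiopkk1} by a direct computation that exploits the recursive structure of Definition~\ref{def:multiscale} together with the closed form of Lemma~\ref{lem:recursive}. The key observation is that, in passing from $I^{(k-1)}_{N,N}$ to $I^{(k)}_{N,N}$, only the positively signed terms are altered --- each is replaced by its Level~1 image from \eqref{eq:level 1} --- while every negatively signed term is carried over unchanged. Consequently, when the two operators are subtracted, the carried-over negatively signed terms cancel in pairs, and what remains is a sum of four-operator blocks, each of which factorises via the tensor-product identity \eqref{eq:InterpID_a} exactly as in \eqref{eq:split}.

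To carry this out, I would first record, using Lemma~\ref{lem:recursive}, that
\[
 I^{(k-1)}_{N,N} = \sum_{i=0}^{k-1}I^{}_{\frac{N}{2^i},\frac{N}{2^{k-1-i}}}
 - \sum_{i=1}^{k-1}I^{}_{\frac{N}{2^i}, \frac{N}{2^{k-i}}},
\]
so that the positively signed terms are precisely $I^{}_{\frac{N}{2^i},\frac{N}{2^{k-1-i}}}$ for $i=0,\dots,k-1$. By Definition~\ref{def:multiscale} and \eqref{eq:level 1}, $I^{(k)}_{N,N}$ is obtained from this by replacing each such term with $I^{}_{\frac{N}{2^i},\frac{N}{2^{k-i}}} + I^{}_{\frac{N}{2^{i+1}},\frac{N}{2^{k-1-i}}} - I^{}_{\frac{N}{2^{i+1}},\frac{N}{2^{k-i}}}$, while keeping $-\sum_{i=1}^{k-1}I^{}_{\frac{N}{2^i}, \frac{N}{2^{k-i}}}$ intact. (A one-line reindexing $i\mapsto i+1$ in the resulting sums confirms this is consistent with the closed form of $I^{(k)}_{N,N}$ in Lemma~\ref{lem:recursive}.) Subtracting, the two copies of $\sum_{i=1}^{k-1}I^{}_{\frac{N}{2^i}, \frac{N}{2^{k-i}}}$ cancel, leaving
\begin{multline*}
 I^{(k-1)}_{N,N}-I^{(k)}_{N,N}
 = \sum_{i=0}^{k-1}\Bigl( I^{}_{\frac{N}{2^i},\frac{N}{2^{k-1-i}}}
 - I^{}_{\frac{N}{2^i},\frac{N}{2^{k-i}}}\\
 - I^{}_{\frac{N}{2^{i+1}},\frac{N}{2^{k-1-i}}}
 + I^{}_{\frac{N}{2^{i+1}},\frac{N}{2^{k-i}}}\Bigr).
\end{multline*}
Finally, applying \eqref{eq:InterpID_a}, i.e. $I^{}_{N_x,N_y}=I^{}_{N_x,0}\circ I^{}_{0,N_y}$, to each of the four operators in the $i$-th summand, that summand factors as $\bigl(I^{}_{\frac{N}{2^i},0}-I^{}_{\frac{N}{2^{i+1}},0}\bigr)\bigl(I^{}_{0,\frac{N}{2^{k-1-i}}}-I^{}_{0,\frac{N}{2^{k-i}}}\bigr)$, and summing over $i$ yields \eqref{eq:multiopkk1}.

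The argument is essentially bookkeeping, and the main thing to be careful about is tracking which terms of $I^{(k-1)}_{N,N}$ carry a plus sign and checking that the Level~1 substitution indeed reproduces the expression of Lemma~\ref{lem:recursive} for $I^{(k)}_{N,N}$; the only genuine subtlety is the index shift in the emerging double sums. Once the carried-over negatively signed terms are seen to cancel, the factorisation is immediate from \eqref{eq:InterpID_a}. An alternative would be to prove \eqref{eq:multiopkk1} by induction on $k$ straight from Definition~\ref{def:multiscale}, with \eqref{eq:split} serving as the base case $k=1$ (the case $k=0$ being vacuous, with the empty sum on the right and the convention $I^{(-1)}_{N,N}:=I^{(0)}_{N,N}$ on the left); but the direct computation above is shorter and exposes the cancellation more clearly.
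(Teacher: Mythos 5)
Your proposal is correct and follows essentially the same route as the paper: both arguments reduce $I^{(k-1)}_{N,N}-I^{(k)}_{N,N}$ to the grouped sum $\sum_{i=0}^{k-1}\bigl(I^{}_{\frac{N}{2^i},\frac{N}{2^{k-1-i}}}-I^{}_{\frac{N}{2^i},\frac{N}{2^{k-i}}}-I^{}_{\frac{N}{2^{i+1}},\frac{N}{2^{k-1-i}}}+I^{}_{\frac{N}{2^{i+1}},\frac{N}{2^{k-i}}}\bigr)$ and then factor each block via \eqref{eq:InterpID_a} as in \eqref{eq:split}. The only difference is organisational: the paper subtracts the two closed forms from Lemma~\ref{lem:recursive} and reindexes four sums, whereas you apply the Level~1 substitution of Definition~\ref{def:multiscale} directly to the positive terms of $I^{(k-1)}_{N,N}$, which makes the cancellation of the carried-over negative terms immediate.
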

\begin{proof}
 From the expression for the multiscale operator in~(\ref{eq:multiscaleinterp}) we can write
 \begin{multline*}
  I^{(k-1)}_{N,N}-I^{(k)}_{N,N} = \displaystyle\sum\limits_{i=0}^{k-1}I^{}_{\frac{N}{2^i},\frac{N}{2^{k-1-i}}}
  - \displaystyle\sum\limits_{i=1}^{k-1}I^{}_{\frac{N}{2^i},\frac{N}{2^{k-1}}}
  -\displaystyle\sum\limits_{i=0}^{k}I^{}_{\frac{N}{2^{i}},\frac{N}{2^{k-i}}}
  +\displaystyle\sum\limits_{i=1}^{k}I^{}_{\frac{N}{2^i},\frac{N}{2^{k+1-i}}}\\
  =\displaystyle\sum\limits_{i=0}^{k-1}I^{}_{\frac{N}{2^i},\frac{N}{2^{k-1-i}}}
  -\left(\displaystyle\sum\limits_{i=0}^{k-1}I^{}_{\frac{N}{2^i},\frac{N}{2^{k-i}}}-
  I^{}_{N,\frac{N}{2^k}}\right)
  -\left(\displaystyle\sum\limits_{i=0}^{k-1}I^{}_{\frac{N}{2^i},\frac{N}{2^{k-i}}} +
  I^{}_{\frac{N}{2^k},N}\right)
  + \displaystyle\sum\limits_{i=0}^{k-1}I^{}_{\frac{N}{2^{i+1}},\frac{N}{2^{k-i}}}\\
  =\displaystyle\sum\limits_{i=0}^{k-1}\left(I^{}_{\frac{N}{2^i},\frac{N}{2^{k-1-i}}}
  -I^{}_{\frac{N}{2^i},\frac{N}{2^{k-i}}}
  +I^{}_{\frac{N}{2^{i+1}},\frac{N}{2^{k-i}}}\right)
  -\displaystyle\sum\limits_{i=0}^{k-1}I^{}_{\frac{N}{2^i},\frac{N}{2^{k-i}}}
  +I^{}_{N,\frac{N}{2^k}}-I^{}_{\frac{N}{2^k},N}\\
  =\displaystyle\sum\limits_{i=0}^{k-1}\left(I^{}_{\frac{N}{2^i},\frac{N}{2^{k-1-i}}}
  -I^{}_{\frac{N}{2^i},\frac{N}{2^{k-i}}}
  +I^{}_{\frac{N}{2^{i+1}},\frac{N}{2^{k-i}}}\right)
  -\displaystyle\sum\limits_{i=0}^{k-1}I^{}_{\frac{N}{2^{i+1}},\frac{N}{2^{k-1-i}}}\\
  =\displaystyle\sum\limits_{i=0}^{k-1}\left(I^{}_{\frac{N}{2^i},\frac{N}{2^{k-1-i}}}
  -I^{}_{\frac{N}{2^i},\frac{N}{2^{k-i}}}
  +I^{}_{\frac{N}{2^{i+1}},\frac{N}{2^{k-i}}}
  -I^{}_{\frac{N}{2^{i+1}},\frac{N}{2^{k-1-i}}}\right),
 \end{multline*}
which, recalling \eqref{eq:split}, gives the desired expression.
\end{proof}

\subsection{Analysis of the multiscale interpolation operator}
\label{sec:multiscale analysis}
In this section we provide an analysis for the error incurred by the
multiscale interpolant. Standard finite element analysis
techniques then provide 
a full analysis of the underlying method. We begin by establishing a bound, in the energy norm,
for the difference between interpolants at successive levels.
\begin{theorem}
\label{thm:interpIkk1}
Suppose $\Omega=(0,1)^2$. Let $u\in H^1_0(\Omega)$ and
$I^{(k)}_{N,N}$ be the multi-scale interpolation operator defined in~(\ref{eq:multiscaleinterp}). 
Then there exists a constant, $C$, independent of $N$ and $k$, such that
\begin{equation*}
  \enorm{I^{(k)}_{N,N}u-I^{(k-1)}_{N,N}u} \leq C(k4^{k+1}N^{-4} + 4^{k+1}N^{-3}).
 \end{equation*}
\end{theorem}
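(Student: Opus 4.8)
The plan is to start from the key identity \eqref{eq:multiopkk1} in Lemma~\ref{lem:multiopk_k1}, which expresses $I^{(k-1)}_{N,N}u - I^{(k)}_{N,N}u$ as a sum of $k$ terms, each of the form $\bigl(I^{}_{N/2^i,0} - I^{}_{N/2^{i+1},0}\bigr)\bigl(I^{}_{0,N/2^{k-1-i}} - I^{}_{0,N/2^{k-i}}\bigr)u$. First I would bound the $L_2$-norm of a single term. The strategy mirrors the proof of Theorem~\ref{thm:twoscaleinterp enorm}: applying the first inequality in \eqref{eq:1Dinterpbounds} in the $x$-direction, the operator $I^{}_{N/2^i,0} - I^{}_{N/2^{i+1},0}$ contributes a factor $C(N/2^{i+1})^{-2} = C\,4^{i+1}N^{-2}$ against $\|\partial^2 u/\partial x^2\|$; then applying the first inequality again in the $y$-direction, $I^{}_{0,N/2^{k-1-i}} - I^{}_{0,N/2^{k-i}}$ contributes $C(N/2^{k-i})^{-2} = C\,4^{k-i}N^{-2}$ against $\|\partial^4 u/\partial x^2\partial y^2\|$. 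The two $i$-dependent factors multiply to $C\,4^{k+1}N^{-4}$, which is independent of $i$. Summing the $k$ terms yields $\|I^{(k-1)}_{N,N}u - I^{(k)}_{N,N}u\|_{0,\Omega} \leq C k\, 4^{k+1} N^{-4}$.

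Next I would bound the gradient. For the $x$-derivative, \eqref{eq:InterpID_b} lets me move $\partial/\partial x$ inside so that $I^{}_{N/2^i,0} - I^{}_{N/2^{i+1},0}$ is differentiated; using the \emph{second} inequality in \eqref{eq:1Dinterpbounds} for that factor gives $C(N/2^{i+1})^{-1} = C\,2^{i+1}N^{-1}$, while the $y$-factor still gives $C\,4^{k-i}N^{-2}$ via the first inequality. The product is $C\, 2^{i+1} 4^{k-i} N^{-3} = C\, 2^{k+1} 2^{k-i} N^{-3}$, and summing over $i = 0,\dots,k-1$ using the geometric sum $\sum_{i=0}^{k-1} 2^{k-i} \leq 2^{k+1}$ from Lemma~\ref{lem:geometric} gives $\|\partial_x(I^{(k-1)}_{N,N}u - I^{(k)}_{N,N}u)\|_{0,\Omega} \leq C\, 4^{k+1} N^{-3}$. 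The $y$-derivative is handled symmetrically using \eqref{eq:InterpID_c}, and combining the two gives the same bound on $\|\nabla(I^{(k-1)}_{N,N}u - I^{(k)}_{N,N}u)\|_{0,\Omega}$. Finally, by the definition \eqref{eq:energynorm} of the energy norm, $\enorm{I^{(k)}_{N,N}u - I^{(k-1)}_{N,N}u} \leq \|I^{(k-1)}_{N,N}u - I^{(k)}_{N,N}u\|_{0,\Omega} + \|\nabla(I^{(k-1)}_{N,N}u - I^{(k)}_{N,N}u)\|_{0,\Omega} \leq C(k\,4^{k+1}N^{-4} + 4^{k+1}N^{-3})$, as claimed.

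The routine parts are the applications of \eqref{eq:1Dinterpbounds}, which require $u$ to have enough regularity (implicitly $\partial^4 u/\partial x^2\partial y^2 \in L_2$, and the relevant mixed derivatives bounded, as in \eqref{eq:twoscaleinterpL2}); I would either assume this silently as the paper does elsewhere or note it. The one place that needs genuine care — and which I expect to be the main obstacle — is the bookkeeping of the exponents: getting the powers of $2$ and $4$ to line up correctly across the two directions, confirming that in the $L_2$ estimate the $i$-dependence cancels exactly (leaving the factor $k$ from counting $k$ terms), and checking that in the gradient estimate the surviving geometric sum is bounded by $C\,4^{k+1}$ rather than something larger. A secondary subtlety is applying the one-dimensional bounds to the \emph{differences} $I^{}_{N/2^i,0} - I^{}_{N/2^{i+1},0}$: since $N/2^{i+1}$ divides $N/2^i$, the coarser mesh is nested in the finer, and the difference can be estimated on the finer mesh with mesh-width $2^{i+1}/N$, exactly as in the two-scale case — this is the step I would state carefully rather than wave past.
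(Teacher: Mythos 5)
Your proposal is correct and follows essentially the same route as the paper's own proof: expand via Lemma~\ref{lem:multiopk_k1}, bound each term with the first inequality in \eqref{eq:1Dinterpbounds} to get the $i$-independent factor $4^{k+1}N^{-4}$ (hence the factor $k$ after summing), then use the second inequality plus the geometric sum of Lemma~\ref{lem:geometric} for the gradient bound $C4^{k+1}N^{-3}$, and combine in the energy norm. Your extra remarks on regularity and on estimating the differences of nested interpolants are sensible but do not change the argument.
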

\begin{proof}
By stating $\|I_{N,N}^{(k)}u-I_{N,N}^{(k-1)}u\|_{0,\Omega}$
in the form of Lemma~\ref{lem:multiopk_k1} and applying 
the triangle inequality, along 
with the first inequality in \eqref{eq:1Dinterpbounds} 
and Lemma~\ref{lem:geometric} we see
\begin{multline}
\label{eq:interpIkk1L2}
 \|I_{N,N}^{(k)}u-I_{N,N}^{(k-1)}u\|_{0,\Omega} = \left\|\displaystyle\sum\limits_{i=0}^{k-1}
 (I^{}_{\frac{N}{2^i},0}-I^{}_{\frac{N}{2^{i+1}},0})
 (I^{}_{0,\frac{N}{2^{k-1-i}}}-I^{}_{0,\frac{N}{2^{k-i}}})u\right\|_{0,\Omega}\\
 \leq C\displaystyle\sum\limits_{i=0}^{k-1}\left(\frac{N}{2^{i+1}}\right)^{-2}
 \left\|\left(I_{0,\frac{N}{2^{k-1-i}}}- I_{0,\frac{N}{2^{k-i}}}\right)
 \frac{\partial^2 u}{\partial x^2}\right\|_{0,\Omega}\\
 \leq C\displaystyle\sum\limits_{i=0}^{k-1}\left(\frac{N}{2^{i+1}}\right)^{-2}
 \left(\frac{N}{2^{k-i}}\right)^{-2}
 \left\|\frac{\partial^4 u}{\partial x^2 \partial y^2}\right\|_{0,\Omega}\\
 \leq C\displaystyle\sum\limits_{i=0}^{k-1} (2^{2i+2})(2^{2k-2i})N^{-4}
 = Ck4^{k+1}N^{-4}.
\end{multline}
Using a similar argument, but with the second inequality in
\eqref{eq:1Dinterpbounds},
we have
\begin{multline}
\label{eq:gradinterpIkk1L2}
 \left\|\frac{\partial}{\partial x} \left(\displaystyle\sum\limits_{i=0}^{k-1}
(I^{}_{\frac{N}{2^i},0}-I^{}_{\frac{N}{2^{i+1}},0})
 (I^{}_{0,\frac{N}{2^{k-1-i}}}-I^{}_{0,\frac{N}{2^{k-i}}})u\right)\right\|_{0,\Omega}\\
\leq C_0\displaystyle\sum\limits_{i=0}^{k-1}\left(\frac{N}{2^{i+1}}\right)^{-1}
 \left\|(I^{}_{0,\frac{N}{2^{k-1-i}}}- I^{}_{0,\frac{N}{2^{k-i}}})
 \frac{\partial^2 u}{\partial x^2}\right\|_{0,\Omega}
 \leq C4^{k+1}N^{-3}.
\end{multline}
The corresponding bound on the $y$-derivatives is obtained in a similar manner.
Combining these results gives
\[
 \|\nabla (I^{(k)}_{N,N}u-I^{(k-1)}_{N,N}u)\|_{0,\Omega}
 \leq C4^{k+1}N^{-3}.
\]
Now following from the definition of the energy norm and results~\eqref{eq:interpIkk1L2}
and~\eqref{eq:gradinterpIkk1L2} one has
\begin{multline*}
  \enorm{I^{(k)}_{N,N}u-I^{(k-1)}_{N,N}u} \leq \|\nabla(I^{(k)}_{N,N}u-I^{(k-1)}_{N,N}u)\|_{0,\Omega}
  + \|I^{(k)}_{N,N}u-I^{(k-1)}_{N,N}u\|_{0,\Omega}\\
  \leq C4^{k+1}N^{-3} + Ck4^{k+1}N^{-4}.
 \end{multline*}
\end{proof}

We now want to show that $\enorm{I^{(k)}_{N,N}u-I^{(k-1)}_{N,N}u}$ is
an upper bound for 
$\enorm{I^{(k-1)}_{N,N}u-I^{}_{N,N}u}$, thus showing that in, order to estimate
$\enorm{I^{}_{N,N}u-I^{(k)}_{N,N}u}$, it is enough to look at $\enorm{I^{(k)}_{N,N}u-I^{(k-1)}_{N,N}u}$.

\begin{lemma}
 \label{lem:enorminductive}
 Let $u$ and $I^{(k)}_{N,N}$ be defined as in Theorem~\ref{thm:interpIkk1}. Then there exists a
constant, $C$, independent of $N$ and $k$, such that
 \begin{equation*}
 \enorm{I^{(k-1)}_{N,N}u-I^{}_{N,N}u} \leq \enorm{I^{(k)}_{N,N}u-I^{(k-1)}_{N,N}u}.
 \end{equation*}
\end{lemma}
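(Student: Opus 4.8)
The plan is to exploit the telescoping structure of the multiscale operators and then invoke Theorem~\ref{thm:interpIkk1}. Since $I^{(0)}_{N,N}=I^{}_{N,N}$ by Definition~\ref{def:multiscale}, we may write
\[
 I^{}_{N,N}u-I^{(k-1)}_{N,N}u = \sum_{j=1}^{k-1}\bigl(I^{(j-1)}_{N,N}u-I^{(j)}_{N,N}u\bigr),
\]
so that the triangle inequality in the energy norm gives
\[
 \enorm{I^{}_{N,N}u-I^{(k-1)}_{N,N}u} \leq \sum_{j=1}^{k-1}\enorm{I^{(j)}_{N,N}u-I^{(j-1)}_{N,N}u}.
\]
For $k=1$ the sum is empty and the left-hand side vanishes, so the bound is trivial; thus I would henceforth take $k\geq 2$.

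Next I would apply Theorem~\ref{thm:interpIkk1}, with $k$ replaced by each $j$ in turn, to bound each summand by $C\bigl(j4^{j+1}N^{-4}+4^{j+1}N^{-3}\bigr)$. It then remains to estimate the two resulting partial sums, and both are elementary: the purely geometric one satisfies $\sum_{j=1}^{k-1}4^{j+1}\leq \tfrac13 4^{k+1}$, and for the other one I would simply bound $j\leq k$ and pull it out of the sum, $\sum_{j=1}^{k-1}j4^{j+1}\leq k\sum_{j=1}^{k-1}4^{j+1}\leq \tfrac13 k4^{k+1}$. Absorbing the factor $1/3$ into the generic constant, this yields
\[
 \enorm{I^{}_{N,N}u-I^{(k-1)}_{N,N}u} \leq C\bigl(k4^{k+1}N^{-4}+4^{k+1}N^{-3}\bigr).
\]

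Finally I would observe that the right-hand side above is, up to the generic constant $C$, precisely the bound established in Theorem~\ref{thm:interpIkk1} for $\enorm{I^{(k)}_{N,N}u-I^{(k-1)}_{N,N}u}$; hence $\enorm{I^{(k-1)}_{N,N}u-I^{}_{N,N}u}$ is controlled by it, which is the assertion of the lemma. There is no genuine obstacle here: the only point needing a moment's care is that the per-level bounds of Theorem~\ref{thm:interpIkk1} \emph{grow} with the level, so one must check that summing $k-1$ of them does not worsen the order. It does not, because a sum whose terms increase at least geometrically is dominated, to within a fixed multiplicative constant, by its largest term --- here the $j=k-1$ term, which is of the same order in $N$ and $k$ as the $j=k$ term appearing in Theorem~\ref{thm:interpIkk1}.
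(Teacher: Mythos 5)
Your proposal is correct and follows essentially the same route as the paper: telescoping through the levels, applying Theorem~\ref{thm:interpIkk1} to each successive difference, and summing the elementary bounds $\sum_{j=1}^{k-1}4^{j+1}\leq C4^{k+1}$ and $\sum_{j=1}^{k-1}j4^{j+1}\leq Ck4^{k+1}$ to recover a bound of order $k4^{k+1}N^{-4}+4^{k+1}N^{-3}$. As you note, what is actually established (in the paper's proof as well as yours) is that $\enorm{I^{(k-1)}_{N,N}u-I^{}_{N,N}u}$ satisfies the same upper bound, up to the generic constant, as Theorem~\ref{thm:interpIkk1} gives for $\enorm{I^{(k)}_{N,N}u-I^{(k-1)}_{N,N}u}$, which is precisely how the lemma is used in Lemma~\ref{lem:enormIk_I}.
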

\begin{proof}
Taking the result of \autoref{thm:interpIkk1} and by applying an
inductive argument one can  easily deduce that
\begin{equation*}
 \displaystyle\sum\limits_{i=1}^{k-1}i4^{i+1}\leq k4^{k+1} 
\quad \text{ and } \quad
 \displaystyle\sum\limits_{i=1}^{k-1} 4^{i+1} \leq 4^{k+1}.
\end{equation*}
 The result then follows by applying the triangle inequality
 and observing that 
 \begin{equation*}
  \displaystyle\sum\limits_{i=1}^{k-1}
  \enorm{I^{(i)}_{N,N}u-I^{(i-1)}_{N,N}u} \leq 
  \displaystyle\sum\limits_{i=1}^{k-1} (i4^{i+1}N^{-4}+4^{i+1}N^{-3}).
 \end{equation*}

\end{proof}

\begin{lemma}
\label{lem:enormIk_I}
Let $u$ and $I^{(k)}_{N,N}$ be defined as in Theorem~\ref{thm:interpIkk1}. Then there exists a
constant, $C$, independent of $N$ and $k$, such that
 \begin{align*}
 \enorm{I^{(k)}_{N,N}u- I^{}_{N,N}u} \leq C4^{k+1}N^{-3}.
 \end{align*}
 \end{lemma}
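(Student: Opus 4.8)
The plan is to combine the per-level bound from Theorem~\ref{thm:interpIkk1} with a telescoping sum. The key observation is that
\[
I^{(k)}_{N,N}u - I^{}_{N,N}u = I^{(k)}_{N,N}u - I^{(0)}_{N,N}u
= -\sum_{i=1}^{k}\left(I^{(i-1)}_{N,N}u - I^{(i)}_{N,N}u\right),
\]
since $I^{(0)}_{N,N}=I^{}_{N,N}$ by definition. Applying the triangle inequality in the energy norm gives
\[
\enorm{I^{(k)}_{N,N}u - I^{}_{N,N}u}
\leq \sum_{i=1}^{k}\enorm{I^{(i)}_{N,N}u - I^{(i-1)}_{N,N}u}.
\]

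Next I would insert the bound from Theorem~\ref{thm:interpIkk1}, namely $\enorm{I^{(i)}_{N,N}u - I^{(i-1)}_{N,N}u} \leq C(i4^{i+1}N^{-4} + 4^{i+1}N^{-3})$, to obtain
\[
\enorm{I^{(k)}_{N,N}u - I^{}_{N,N}u}
\leq C\sum_{i=1}^{k} i4^{i+1}N^{-4} + C\sum_{i=1}^{k} 4^{i+1}N^{-3}.
\]
It then remains to estimate the two geometric-type sums. For the second, $\sum_{i=1}^{k} 4^{i+1} = 16\sum_{i=1}^{k}4^{i-1} = 16\cdot\frac{4^{k}-1}{3} \leq C4^{k+1}$; for the first, $\sum_{i=1}^{k} i 4^{i+1}$ is dominated by its last term up to a constant, so $\sum_{i=1}^{k} i4^{i+1} \leq Ck4^{k+1}$. (These are exactly the two inductive inequalities already quoted in the proof of Lemma~\ref{lem:enorminductive}, so they may simply be invoked.) This yields
\[
\enorm{I^{(k)}_{N,N}u - I^{}_{N,N}u} \leq Ck4^{k+1}N^{-4} + C4^{k+1}N^{-3}.
\]

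Finally I would absorb the first term into the second: since $k4^{k+1}N^{-4} = (k/N)\cdot 4^{k+1}N^{-3}$ and $k/N \leq 1$ for the relevant range (recall $N = 2^k$, so $k \leq N$ for $k\ge1$, in fact $k/N$ is tiny), we have $k4^{k+1}N^{-4} \leq 4^{k+1}N^{-3}$, giving the claimed bound $\enorm{I^{(k)}_{N,N}u - I^{}_{N,N}u} \leq C4^{k+1}N^{-3}$.

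I do not anticipate a genuine obstacle here: the result is essentially a packaging of Theorem~\ref{thm:interpIkk1} through a telescoping/triangle-inequality argument, and the only mild subtlety is the bookkeeping on the two sums and the observation that $k/N \le 1$ lets the $N^{-4}$ term be swallowed. The step most worth stating carefully is the telescoping identity and the verification that the summed bounds behave like their final terms, but both are routine and have in fact already been used implicitly in Lemma~\ref{lem:enorminductive}.
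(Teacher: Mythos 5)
Your proposal is correct and follows essentially the same route as the paper: the paper obtains the bound by a single triangle-inequality step through $I^{(k-1)}_{N,N}u$, invoking Lemma~\ref{lem:enorminductive} (whose proof is precisely your telescoping-plus-geometric-sum computation) together with Theorem~\ref{thm:interpIkk1}, and then absorbs the $k4^{k+1}N^{-4}$ term using $k\leq N$ exactly as you do. The only difference is organizational: you inline the telescoping sum directly instead of citing the intermediate lemma.
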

 \begin{proof}
 By the triangle inequality and the results of Theorem~\ref{thm:interpIkk1} 
 and Lemma~\ref{lem:enorminductive} one has
 \begin{multline*}
  \enorm{I^{(k)}_{N,N}u- I^{}_{N,N}u} \leq \enorm{I^{(k)}_{N,N}u- I^{(k-1)}_{N,N}u} +
  \enorm{I^{(k-1)}_{N,N}u- I^{}_{N,N}u}\\
  \leq C(k4^{k+1}N^{-4} +4^{k+1}N^{-3}).
 \end{multline*}
The result now follows by noting that, for $k\leq N$ (which will always be the case),
 \begin{equation*}
  4^{k+1}N^{-3}\geq k4^{k+1}N^{-4}.
 \end{equation*}
 \end{proof}

Using the triangle inequality, we combined this result with
\autoref{lem:uINN} and \ref{lem:enormIk_I}  to establish the following
error estimate.
 \begin{theorem}
 \label{thm:enorm_u_I_k}
 Let $u$ and $I^{(k)}_{N,N}$ be defined as in Theorem~\ref{thm:interpIkk1}. Then there exists a
constant, $C$, independent of $N$ and $k$, such that
 \begin{equation*}
  \enorm{u-I^{(k)}_{N,N}u}\leq C(N^{-1}+ 4^{k+1}N^{-3}).
  \end{equation*}
 \end{theorem}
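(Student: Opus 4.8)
The plan is to obtain the estimate by a single application of the triangle inequality, splitting the total error into the classical bilinear interpolation error and the discrepancy between the classical and multiscale interpolants, each of which has already been bounded above. First I would write
\[
\enorm{u-I^{(k)}_{N,N}u} \leq \enorm{u-I^{}_{N,N}u} + \enorm{I^{}_{N,N}u - I^{(k)}_{N,N}u}.
\]
The first term on the right is exactly the quantity controlled by \autoref{lem:uINN}, which gives $\enorm{u-I^{}_{N,N}u}\leq CN^{-1}$ (the gradient contribution $\|\nabla(u-I^{}_{N,N}u)\|_{0,\Omega}\leq CN^{-1}$ dominating the $L_2$ contribution $CN^{-2}$). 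The second term is precisely the object of \autoref{lem:enormIk_I}, which yields $\enorm{I^{(k)}_{N,N}u - I^{}_{N,N}u}\leq C4^{k+1}N^{-3}$.

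Adding these two bounds gives $\enorm{u-I^{(k)}_{N,N}u}\leq C(N^{-1}+4^{k+1}N^{-3})$, which is the assertion; one may take $C$ to be twice the larger of the two constants supplied by the cited lemmas, and it is independent of both $N$ and $k$ because each input constant is. So the proof is just two lines: invoke the triangle inequality, then quote \autoref{lem:uINN} and \autoref{lem:enormIk_I}.

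There is essentially no genuine obstacle here, since all the analytic work is done upstream in \autoref{thm:interpIkk1} and \autoref{lem:enormIk_I}. The only points that warrant a word of care are: (i) checking that the standing regularity assumption on $u$ (inherited from \autoref{thm:interpIkk1}) is in fact strong enough to legitimately invoke \autoref{lem:uINN}, which nominally needs $u\in H^2_0(\Omega)$ and, through the bounds it rests on, control of the mixed second derivatives of $u$; and (ii) being explicit that it is the term $4^{k+1}N^{-3}$ that survives rather than $k4^{k+1}N^{-4}$ --- this is the same observation already used in the proof of \autoref{lem:enormIk_I}, namely that $k\leq N$ in every case of interest, so $k4^{k+1}N^{-4}\leq 4^{k+1}N^{-3}$ and the former may be absorbed into the generic constant.
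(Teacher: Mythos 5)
Your proof is correct and is essentially the paper's own argument: the paper likewise obtains the estimate by a single triangle inequality combining the bound $\enorm{u-I^{}_{N,N}u}\leq CN^{-1}$ of \autoref{lem:uINN} with the bound $\enorm{I^{(k)}_{N,N}u-I^{}_{N,N}u}\leq C4^{k+1}N^{-3}$ of \autoref{lem:enormIk_I}. Your remark (ii) is not even needed, since the absorption of $k4^{k+1}N^{-4}$ into $4^{k+1}N^{-3}$ has already been carried out inside \autoref{lem:enormIk_I}.
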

 
 \begin{remark}
 \label{rmk:Choice of k}
  Our primary goal is to construct an efficient finite element method, by taking the smallest
  possible space on which $I^{(k)}_{N,N}u$ can be defined. Thus we want to take $k$ as large as 
  is possible while retaining the accuracy of the underlying method. On a uniform mesh the 
  coarsest grid must have at least two intervals in each coordinate direction. 
  From~\eqref{eq:multiscaleinterp} it is clear that the coarsest grid we interpolate over 
  has $N/2^k$ intervals. Thus the largest and most useful value of $k$ we can choose is 
  $\khat=\log_2N-1$. In light of this, we have the following result.
 \end{remark}

 \begin{corollary}
 \label{cor:multiscale}
 Let $u$ and $I^{(\khat)}_{N,N}$ be defined as in Theorem~\ref{thm:interpIkk1}.
 Taking $k:=\khat=\log_2(N)-1$
in Theorem~\ref{thm:enorm_u_I_k}, there exists a
constant, $C$, independent of $N$, such that
  \begin{equation*}
   \enorm{u-I^{(\khat)}_{N,N}u} \leq CN^{-1}.
  \end{equation*}
 \end{corollary}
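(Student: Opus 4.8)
The plan is to substitute $k = \khat = \log_2(N) - 1$ directly into the bound of Theorem~\ref{thm:enorm_u_I_k} and show that the second term collapses to the order of the first. First I would write out $4^{\khat+1} = 4^{\log_2 N} = (2^2)^{\log_2 N} = (2^{\log_2 N})^2 = N^2$, using only elementary properties of exponents and logarithms. Substituting this into the estimate $\enorm{u - I^{(\khat)}_{N,N}u} \leq C(N^{-1} + 4^{\khat+1}N^{-3})$ from Theorem~\ref{thm:enorm_u_I_k} gives $\enorm{u - I^{(\khat)}_{N,N}u} \leq C(N^{-1} + N^2 \cdot N^{-3}) = C(N^{-1} + N^{-1}) = 2CN^{-1}$, and absorbing the constant factor $2$ into the generic constant $C$ yields the claimed bound $\enorm{u - I^{(\khat)}_{N,N}u} \leq CN^{-1}$.

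The only point requiring a moment of care is that Theorem~\ref{thm:enorm_u_I_k} is stated with a constant $C$ that is independent of both $N$ and $k$; since we are now fixing $k$ as a specific function of $N$, we must check that the resulting constant remains independent of $N$ — but this is immediate, because after the substitution the bound is purely in terms of $N$ with a numerical constant. I would also note, as the remark preceding the corollary already does, that this choice of $k$ is legitimate: the coarsest grid appearing in the expansion~\eqref{eq:multiscaleinterp} has $N/2^{\khat} = N/2^{\log_2(N)-1} = 2$ intervals in one coordinate direction, which is the smallest admissible value, so $\khat$ is indeed the largest permissible choice.

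There is essentially no obstacle here — the corollary is a one-line arithmetic consequence of the preceding theorem. The substantive work was done in Theorem~\ref{thm:interpIkk1} and Lemmas~\ref{lem:enorminductive}--\ref{lem:enormIk_I}, where the $4^{k+1}$ growth in the level index was controlled; the role of this corollary is simply to record the optimal trade-off. In writing it up I would keep the proof to two or three lines: state $4^{\khat+1} = N^2$, substitute, and conclude.
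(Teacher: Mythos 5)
Your proposal is correct and coincides with the paper's (implicitly omitted) argument: the corollary is obtained exactly by substituting $k=\khat=\log_2(N)-1$ into Theorem~\ref{thm:enorm_u_I_k}, noting $4^{\khat+1}=N^2$ so the second term becomes $N^{-1}$, and absorbing the factor of two into the generic constant. Your additional check that $N/2^{\khat}=2$ is the smallest admissible coarse grid matches the justification already given in Remark~\ref{rmk:Choice of k}.
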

 \subsection{Multiscale sparse grid finite element method}
\label{sec:multiscale FEM}
 Let $\psi_i^N$ and $\psi_j^N$ be as defined in~\eqref{eq:1Dbasis}
 We now define the finite dimensional space $V^{(k)}_{N,N} \subset
 H^1_0(\Omega)$ as 
\begin{multline*}
 V^{(k)}_{N,N} =
\mathrm{span}\left\{ \psi^{N}_i(x) \psi^{N/2^{k}}_j(y)\right\}^{i=1:N-1}_{j = 1:N/2^{k}-1}\\
+\mathrm{span}\left\{ \psi^{N/2}_i(x)
  \psi^{N/2^{k-1}}_j(y)\right\}^{i=1:N/2-1}_{j=1:N/2^{k-1}-1}\\ 
+ \cdots
+\mathrm{span}\left\{ \psi^{N/2^{k-1}}_i(x)
  \psi^{N/2}_j(y)\right\}^{i=1:N/2^{k-1}-1}_{j = 1:N/2-1} 
+ \mathrm{span}\left\{ \psi^{N/2^k}_i(x) \psi^{N}_j(y)
\right\}^{i=1:N/2^k-1}_{j = 1:N-1}. 
\end{multline*}
Note that each two dimensional basis function is  the product of
 two one-dimensional functions that are of different scales. This
 description involves
$(k+1)2^{-k}N^2 + (2^{-k+1}-4)N + k+1$
functions in the spanning set, which are
 illustrated in
 the diagrams in \autoref{fig:multiscale basis2}. The left most diagram
 shows a uniform mesh with $N=16$ intervals in each coordinate
 direction. Each node represents a basis function for the space
$V^{(0)}_{N,N} =  V^{}_{N,N}$.

\begin{figure}[htb]
\centering
\includegraphics[scale=.8]{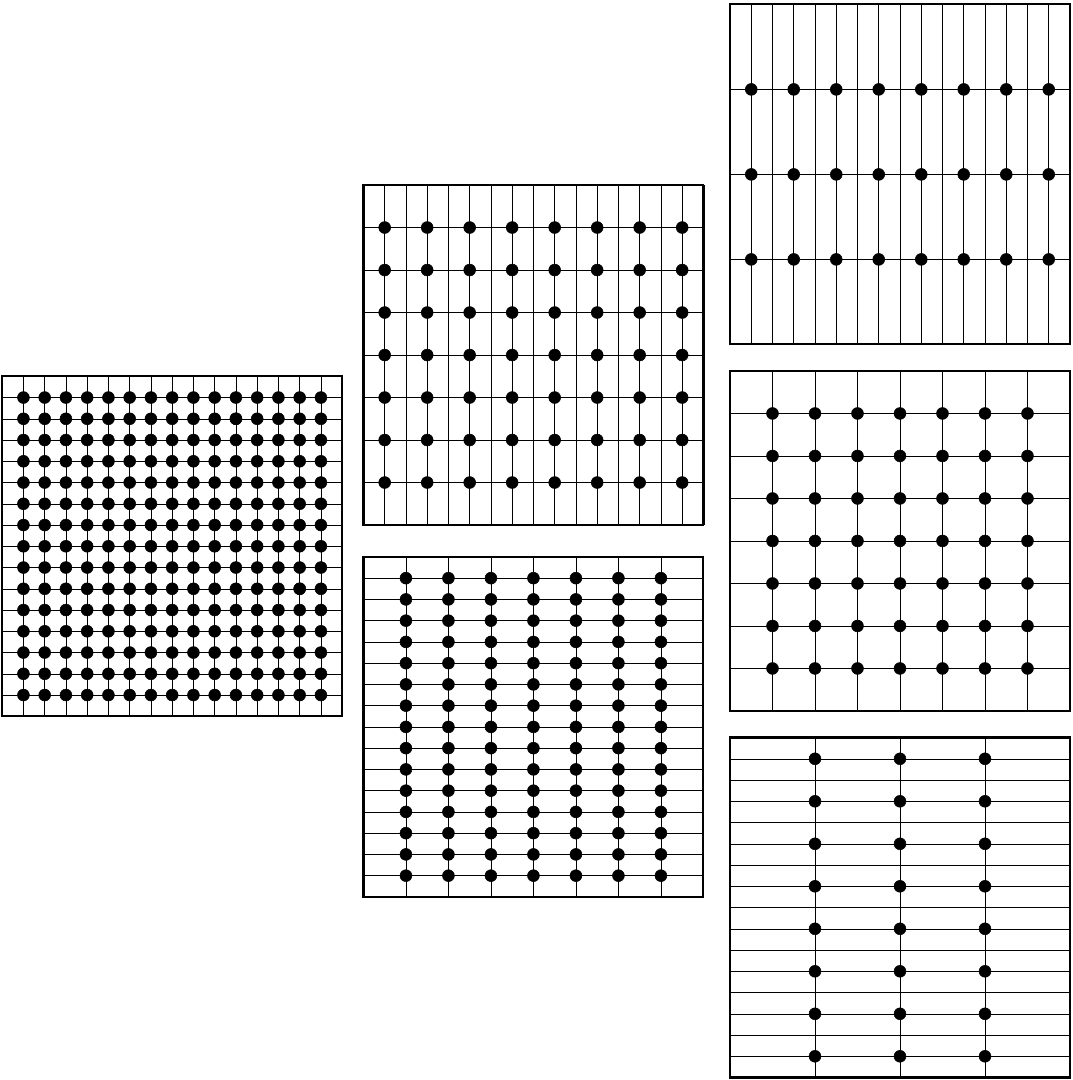}
\caption{Meshes for the multiscale method for $N=16$, based on the
  spaces (left to right) $V^{(0)}_{N,N}$, $V^{(1)}_{N,N}$ and
  $V^{(2)}_{N,N}$}
\label{fig:multiscale basis2}
\end{figure}

In the centre column of \autoref{fig:multiscale basis2} we show the grids associated with
$V^{(1)}_{N,N}$.
Notice that the spaces associated with these two grids are
not linearly independent. To form an invertible system matrix, in
\autoref{fig:multiscale basis2} we
highlight  a particular choice of non-redundant basis functions by
 solid circles:
\begin{align*}
\left\{ \psi^{N}_i(x) \psi^{N/2}_j(y) \right\}^{i=1:2:N-1}_{j = 1:N/2-1}
\quad \text{ and } \quad
\left\{ \psi^{N/2}_i(x) \psi^{N}_j(y) \right\}^{i=1:N/2-1}_{j = 1:N-1}.
\end{align*}

The right-most  column of \autoref{fig:multiscale basis2} show the
grids associated with $V^{(2)}_{N,N}$. Again we use solid
circles to represent our choice of basis:
\begin{multline*}
\left\{ \psi^{N}_i(x) \psi^{N/4}_j(y)\right\}^{i=1:2:N-1}_{j = 1:N/4-1}, \quad
\left\{ \psi^{N/2}_i(x) \psi^{N/2}_j(y) \right\}^{i=1:N/2-1}_{j =
  1:N/2-1},\\ \text{ and } \quad
\left\{ \psi^{N/4}_i(x) \psi^{N}_j(y) \right\}^{i=1:N/4-1}_{j = 1:2:N-1}.
\end{multline*}
There are many ways in which one can choose which basis functions to include. The way we 
have chosen, excluding the boundary, is as follows:
\begin{itemize}
 \item when $N_x>N_y$ include every second basis function in the $x$-direction;
 \item when $N_x=N_y$ or when $2N_x=N_y$ include all basis functions;
 \item from the remaining subspaces include every second basis function in the $y$-direction.
\end{itemize}
In general the choice of basis we make for the space $V^{(k)}_{N,N}$
is dependent on whether $k$ is odd or even.
When $k$ is odd the basis we choose is:
\begin{subequations}
\label{eq:comb basis}
\begin{multline}
\bigcup\limits_{l=0}^{(k-1)/2}\left\{\psi_i^{N/2^l}\psi_j^{N/2^{k-l}}
\right\}^{i=1:2:N/2^{l}-1}
_{j=1:N/2^{k-l}-1}
\bigcup \left\{\psi_i^{N/2^{(k+1)/2}}\psi_j^{N/2^{(k-1)/2}}\right\}^{i=1:N/2^{(k+1)/2}-1}
_{j=1:N/2^{(k-1)/2}-1}\\
\bigcup\limits_{l=(k+3)/2}^{k}\left\{\psi_i^{N/2^l}\psi_j^{N/2^{k-l}}\right\}^{i=1:N/2^{l}-1}
_{j=1:2:N/2^{k-l}-1};
\end{multline}
And when $k$ is even the basis we choose is:
\begin{multline}
\bigcup\limits_{l=0}^{k/2-1}\left\{\psi_i^{N/2^l}\psi_j^{N/2^{k-l}}
\right\}^{i=1:2:N/2^{l}-1}
_{j=1:N/2^{k-l}-1}
\bigcup \left\{\psi_i^{N/2^{k/2}}\psi_j^{N/2^{k/2}}\right\}^{i=1:N/2^{k/2}-1}
_{j=1:N/2^{k/2}-1}\\
\bigcup\limits_{l=k/2+1}^{k}\left\{\psi_i^{N/2^l}\psi_j^{N/2^{k-l}}\right\}^{i=1:N/2^{l}-1}
_{j=1:2:N/2^{k-l}-1}.
\end{multline}
\end{subequations}
This has dimension $2^{-k}(k/2+1)N^2-2N+1$.
For a computer implementation one can, of course, choose alternative
 ways of expressing the basis. Although these are mathematically equivalent, they can lead
to different linear systems.
 We can now
formulate the  multiscale  sparse grid finite element method: \emph{find $u_{N,N}^{(k)} \in {V}_{N,N}^{(k)}$}
such that
\begin{equation}
\label{eq:sparse FEM}
 \mathcal{B}(u_{N,N}^{(k)},v^{}_{N,N}) = (f,v^{}_{N,N})
 \quad \text{ for all } v^{}_{N,N} \in {V}^{(k)}_{N,N}.
\end{equation}
We will consider the analysis for the case $k:=\khat=\log_2N-1$.

\begin{theorem}
\label{thm:sparse FEM error}
Let $u$ be the solution to \eqref{eq:model}, and
  $u_{N,N}^{(\khat)}$ the solution to \eqref{eq:sparse FEM}. Then
 there exists a constant $C$, independent of $N$ and $\Eps$,  such that
 \begin{equation*}
  \enorm{u-u^{(\khat)}_{N,N}} \leq CN^{-1}.
 \end{equation*}
 \end{theorem}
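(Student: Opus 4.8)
The plan is to follow exactly the template that produced \eqref{eq:enormGalerkinFEM} and Theorem~\ref{thm:two-scale}: combine a quasi-optimality (C\'ea) estimate with the interpolation bound of Corollary~\ref{cor:multiscale}. Since $r=1>0$, the bilinear form $B(\cdot,\cdot)$ is continuous and coercive on $H^1_0(\Omega)$ with respect to $\enorm{\cdot}$, and $V^{(\khat)}_{N,N}\subset H^1_0(\Omega)$ is finite-dimensional, so \eqref{eq:sparse FEM} has a unique solution satisfying the quasi-optimal bound
\[
\enorm{u-u^{(\khat)}_{N,N}} \leq C \inf_{v\in V^{(\khat)}_{N,N}} \enorm{u-v}.
\]
It then suffices to exhibit one element of $V^{(\khat)}_{N,N}$ that approximates $u$ to within $CN^{-1}$ in the energy norm; the natural candidate is the multiscale interpolant $I^{(\khat)}_{N,N}u$ itself.

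First I would verify that $I^{(\khat)}_{N,N}u \in V^{(\khat)}_{N,N}$. By Lemma~\ref{lem:recursive}, $I^{(k)}_{N,N}u$ is a signed sum of the bilinear nodal interpolants $I_{N/2^i,N/2^{k-i}}u$ and $I_{N/2^i,N/2^{k+1-i}}u$; each such interpolant lies in the tensor-product bilinear space spanned by products $\{\psi^{N/2^a}_p(x)\psi^{N/2^b}_q(y)\}$ for the appropriate $(a,b)$. Comparing with the definition of $V^{(k)}_{N,N}$, the positively signed terms are precisely the spanning sets appearing there, while each negatively signed term maps into a space contained in one of the positively signed ones (one checks $V_{N/2^i,N/2^{k+1-i}}\subset V_{N/2^{i-1},N/2^{k+1-i}}$, and the latter is one of the spanning pairs). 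Hence the whole combination lies in $V^{(k)}_{N,N}$. The only real content here is the bookkeeping that the \emph{non-redundant} basis chosen in \eqref{eq:comb basis} spans the same space as the full (linearly dependent) spanning set in the definition of $V^{(k)}_{N,N}$ --- a finite linear-algebra check, carried out by noting that the functions omitted when passing to \eqref{eq:comb basis} are exactly those already expressible through the retained ones via the one-dimensional refinement relations $\psi^{M}_j = \sum_\ell c_\ell \psi^{2M}_\ell$.

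Once $I^{(\khat)}_{N,N}u\in V^{(\khat)}_{N,N}$ is established, I would invoke Corollary~\ref{cor:multiscale}, which gives $\enorm{u-I^{(\khat)}_{N,N}u}\leq CN^{-1}$ under the regularity available for the solution of \eqref{eq:model} (the mixed fourth derivative used in Theorem~\ref{thm:interpIkk1} is finite for smooth $f$; as elsewhere in the paper, corner effects on the square are not treated as an obstacle). Taking $v=I^{(\khat)}_{N,N}u$ in the quasi-optimal bound then yields
\[
\enorm{u-u^{(\khat)}_{N,N}} \leq C\,\enorm{u-I^{(\khat)}_{N,N}u}\leq CN^{-1},
\]
which is the claim.

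I expect the main obstacle to be the first step: confirming that the basis \eqref{eq:comb basis} spans a space actually containing the combination-technique interpolant $I^{(\khat)}_{N,N}u$. Everything else (coercivity, C\'ea's lemma, and the interpolation estimate) is either standard or already proved in the excerpt; the risk is purely in getting the index arithmetic of the nested meshes and the redundancy elimination exactly right.
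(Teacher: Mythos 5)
Your proposal is correct and follows essentially the same route as the paper: quasi-optimality from continuity and coercivity of $B(\cdot,\cdot)$, the observation that $I^{(\khat)}_{N,N}u \in V^{(\khat)}_{N,N}$, and then Corollary~\ref{cor:multiscale}. The extra bookkeeping you supply to justify the membership $I^{(\khat)}_{N,N}u \in V^{(\khat)}_{N,N}$ (via Lemma~\ref{lem:recursive} and the nesting of the negatively signed subspaces) is simply asserted in the paper, so your version is, if anything, slightly more detailed but not a different argument.
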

\begin{proof}
The bilinear form \eqref{eq:sparse FEM} is continuous and coercive, so
it  follows from classical finite element analysis that
\[
  \enorm{u-u^{(\khat)}_{N,N}} \leq C \displaystyle\inf_{\psi \in
    V^{(\khat)}_{N,N}(\Omega)}\enorm{u-\psi}.
\]
Since  $I^{(k)}_{N,N}u \in V^{(\khat)}_{N,N}(\Omega)$ the result
is an immediate consequence of
Corollary~\ref{cor:multiscale}.
\end{proof}

\subsection{Implementation of the multiscale method}
\label{sec:multiscale implementation}
We now turn to the construction of the linear system for the method~\eqref{eq:sparse FEM}.
As with the construction of the linear system
for~\eqref{eq:TwoscaleBilinearForm}, 
we begin by building matrices that project from the one-dimensional
subspaces of $V_{N}$ to 
the full one-dimensional space $V_N$. In MATLAB this is done with the
following line of code 
\listbox{.77}
\begin{lstlisting}
px = sparse(interp1(Sx, eye(length(Sx)), x));
\end{lstlisting}
where $x$ is a uniform mesh on $[0,1]$ with $N$ intervals, and $S_x$
is a submesh of $x$ 
with $N/M$ intervals and $M$ is a proper divisor of $N$.

Next we build a collection of two-dimensional projection matrices, $P$. 
Each of these projects a bilinear function
expressed in terms of each of the basis functions in~\eqref{eq:comb basis}, to one in $V_{N,N}$.
This is done using 
\href{http://www.maths.nuigalway.ie/~niall/SparseGrids/MultiScale_Projector.m}%
{\texttt{MultiScale\_Projector.m}}, a function written in MATLAB, and the following piece of code:
\listbox{.87}
\begin{lstlisting}
P = kron(py(2:N,2:skip_y:My), px(2:N,2:skip_x:Mx));
\end{lstlisting}
Here $M_x$ and $M_y$ are the number of intervals in the coarse $x$-
and $y$-directions 
respectively. The values $skip_x$ and $skip_y$ are set to either $1$ or $2$ depending on whether we
include all basis functions, or every other basis function, in~\eqref{eq:comb basis}.
The projectors in the collection are concatenated to give the projector from $V^{(k)}_{N,N}$ to $U^{}_{N,N}$
That is $P=(P_1|P_2|\dots|P_k|P_{k+1})$.

As is the case with the two-scale method, having constructed $P$ and given the linear system for the classical
Galerkin method, $Au_{N,N}=b$, the linear system for the multiscale method is $(P^TAP)u^{(k)}_{N,N}=P^Tb$.
Evaluating $Pu^{(k)}_{N,N}$ then projects the solution back to the original space $V_{N,N}$.

To use the test harness,
\href{http://www.maths.nuigalway.ie/~niall/SparseGrids/Test_FEM.m}{\texttt{Test\_FEM.m}},
to implement the multiscale  method for our test problem, set the variable \texttt{Method} to
\texttt{multiscale} on Line 18. 

In \autoref{fig:MultiScale ROC}
we present numerical results for
the multiscale method  which demonstrate 
that, in practice, the $CN^{-1}$ term from the theoretical results
of Theorem~\ref{thm:sparse FEM error} 
is observed numerically. The errors for the multiscale method are very
similar to those of  
the classical Galerkin method and the two-scale method, however the
degrees of freedom are greatly reduced. 
We saw in Section~\ref{sec:two-scale implementation}, that for
$N=2^{12}$ the classical Galerkin FEM 
involves $16,769,025$ degrees of freedom, compared to $122,625$ for
the two-scale method. 
This is further reduced to $45,057$ degrees of freedom for the
multiscale method (taking $k=\log_2N-1$). 
Again comparing \autoref{fig:FEM error 64}, \autoref{fig:TwoScale error 64}
and \autoref{fig:MultiScale_error64} we see that although the nature of the point-wise errors are quite 
different, they are all similar in magnitude.

\begin{figure}[htb]
\begin{subfigure}[h]{0.49\textwidth}
\centering
\includegraphics[scale=.35]{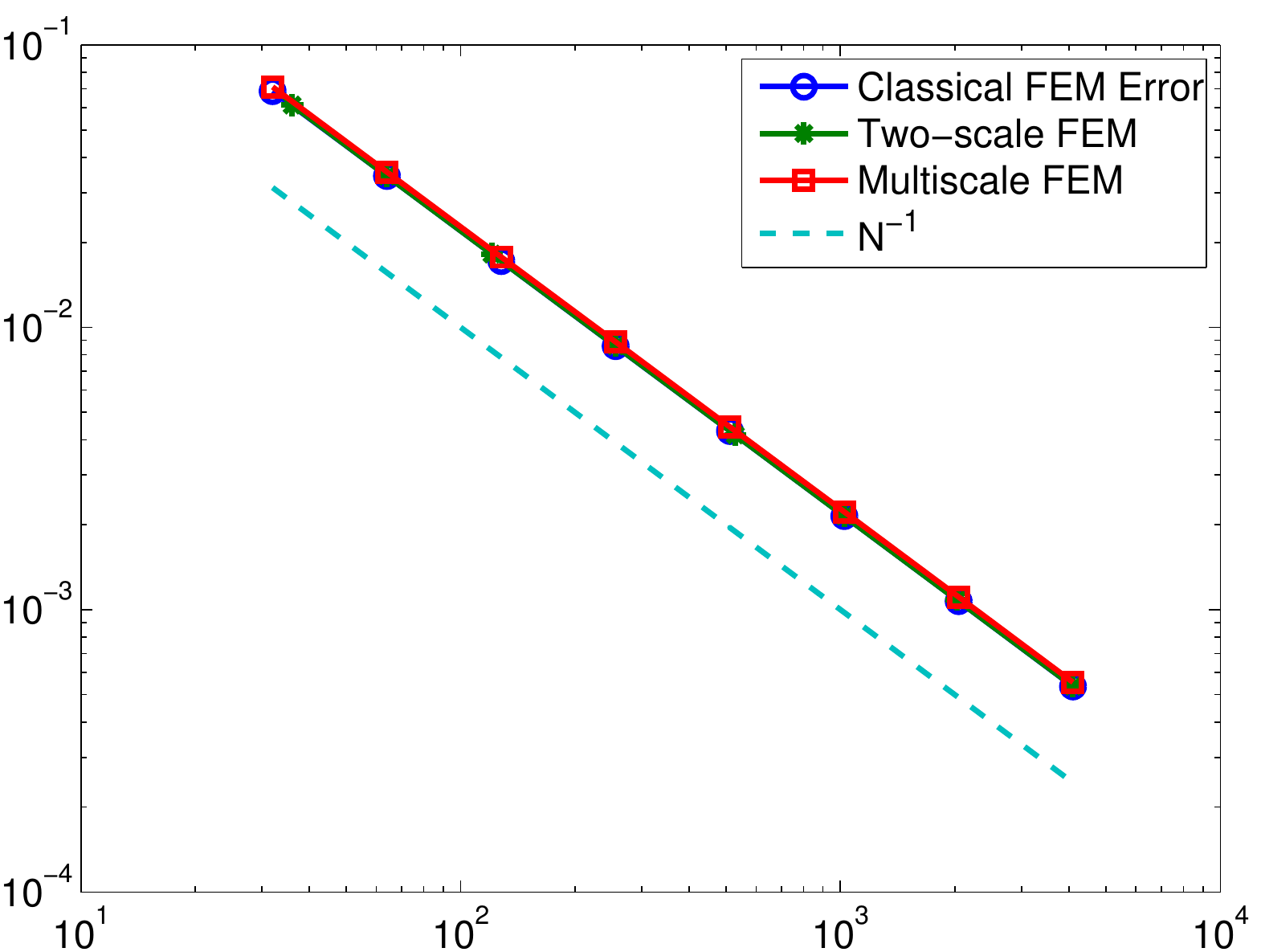}%
\caption{Rate of convergence}
\label{fig:MultiScale ROC}
\end{subfigure}
\begin{subfigure}[h]{0.49\textwidth}
\centering
\includegraphics[scale=.35]{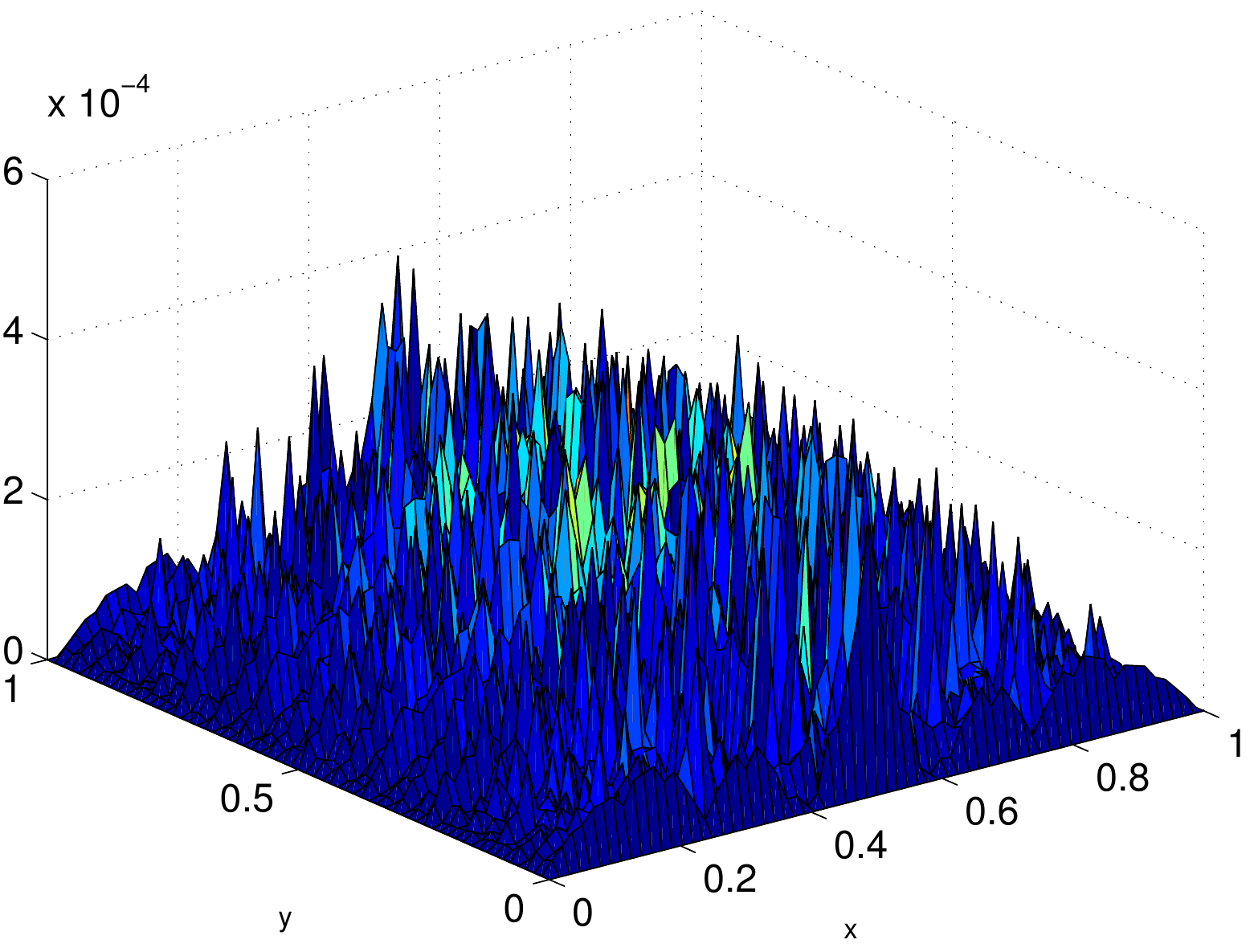}%
\caption{$u-u_{64,64}^{(5)}$}
\label{fig:MultiScale_error64}
\end{subfigure}
\caption{Left: the convergence of the classical, two-scale and multiscale
  methods. Right: error in the multiscale solution with $N=64$ and $k=5$}
\label{fig:multiscale error 64}
\end{figure}

\subsection{Hierarchical basis}
\label{sec:hierarchical}
The standard choice of basis used in the implementation of a sparse
grid method is generally the hierarchical  
basis~(\cite{Yser86}), which is quite different from that given 
in \eqref{eq:comb basis}. The most standard reference for sparse grids is the work
 of Bungartz and  Griebel \cite{BuGr04}, so we adopt their notation.
 
 The hierarchical basis for the (full) space $V^{}_{N,N}(\Omega)$, is 
\begin{equation}
\label{eq:tensor sum full hier}
 \bigoplus_{m,n=1,\dots,\log_2N}W_{m,n},
\quad \text{ where } \quad
 W_{m,n}= \left\{\psi^{2^m}_i\psi^{2^n}_j\right\}^{i=1:2:2^m-1}_{j=1:2:2^n-1}.
\end{equation}

This is shown in Figure~\ref{fig:hier basis full} for the case $N=8$. The 
hierarchical basis is constructed from $9$ subspaces. Recall that each
dot on these 
grids represents a basis function with support on the adjacent four rectangles.
For example, $W_{11}$ is shown in the top left, and represents a
simple basis function 
with support on the whole domain. In the subspace $W_{33}$ in the bottom right of
Figure~\ref{fig:hier basis full} each basis function has support within
the rectangle on which it is centred. No two basis functions in a
given subspace share  
support within that subspace.

\begin{figure}[!h]
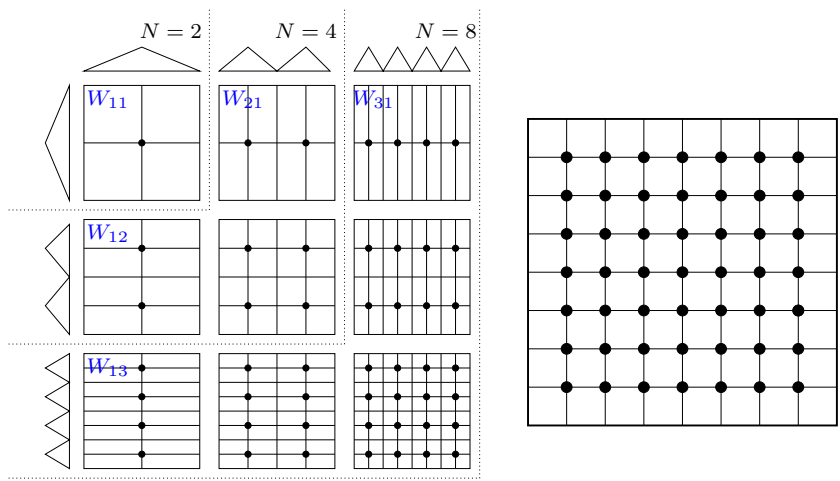

\begin{center}
 \input{hierbasisfulln3.pspdftex}\qquad
 \input{multibasisV_0N8.pspdftex}
 \end{center}
 \caption{Left: All subspaces required to build a hierarchical basis for a full grid 
 where $N=8$. Right: Full grid with basis functions for $N=8$.}
 \label{fig:hier basis full}
\end{figure}

A sparse grid method is constructed by omitting a certain number of the
subspaces $W_{m,n}$ in \eqref{eq:tensor sum full hier}. For a typical
problem, one uses only those subspaces
$W_{m,n}$ for which  $m+n\leq\log_2N+1$: compare Figures 3.3 and 3.5 of
\cite{BuGr04}. This gives the same spaces as defined by \eqref{eq:comb basis}, 
taking $k=\log_2N-1$.
Expressed in terms of
  a hierarchical basis, the basis for $V^{(k)}_{N,N}$ is written as
\begin{equation}
\label{eq:comb hier}
\bigoplus_{\substack{m,n=1,\dots,\log_2N \\ m+n\leq \log_2N+1}}W_{m,n}.
\end{equation}
Both choices give rise to basis functions
centred at the same grid points (but with different support)
and have linear systems of the same size, but different structure: see
\autoref{fig:spymulti} and \autoref{fig:spymultihier}.

 \begin{figure}[!h]
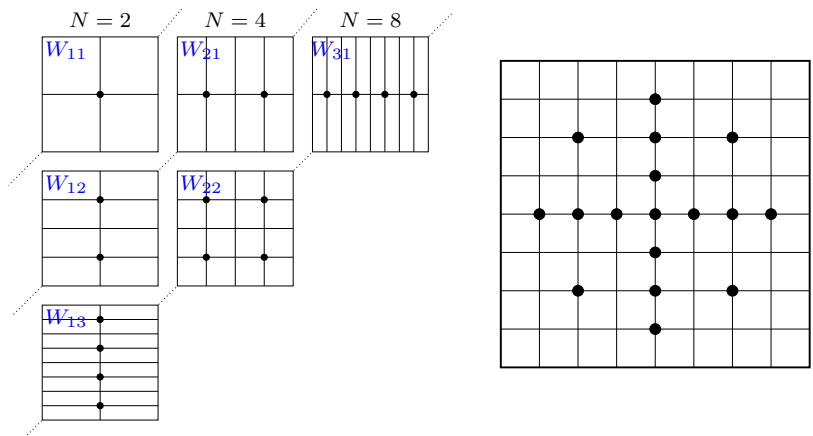

\begin{center}
 \input{hierbasisSparsen3.pspdftex}\qquad
 \input{multibasisSpaV_0N8.pspdftex}
 \end{center}
 \caption{Left: All subspaces required to build a hierarchical basis for a sparse grid 
 where $N=8$. Right: Sparse grid with basis functions for $N=8$.}
 \label{fig:hier basis sparse}
\end{figure}

It is known that this  basis leads to reduced sparsity of the linear system (see, e.g., 
\cite[p. 250]{HGCh07}). Numerical experiments have shown that the
basis given in \eqref{eq:comb basis} leads to fewer non-zeros entries in
the system matrix, and so is our preferred basis. In \autoref{fig:spymulti}
we compare the sparsity pattern of the system matrices for the
basis described in~\eqref{eq:comb basis}
for the case $N=32$ using a natural ordering (left) and lexicographical ordering (right).
For the same value of $N$, \autoref{fig:spymultihier} shows
the sparsity patterns for the system matrices constructed
using the hierarchical basis~\eqref{eq:comb hier}, again using natural
 and lexicographical 
ordering. The matrices in \autoref{fig:spymultihier} contain
more nonzero entries and are much denser than those in \autoref{fig:spymulti}.

\begin{figure}[htb]
 \centering
 \includegraphics[width=4.5cm]{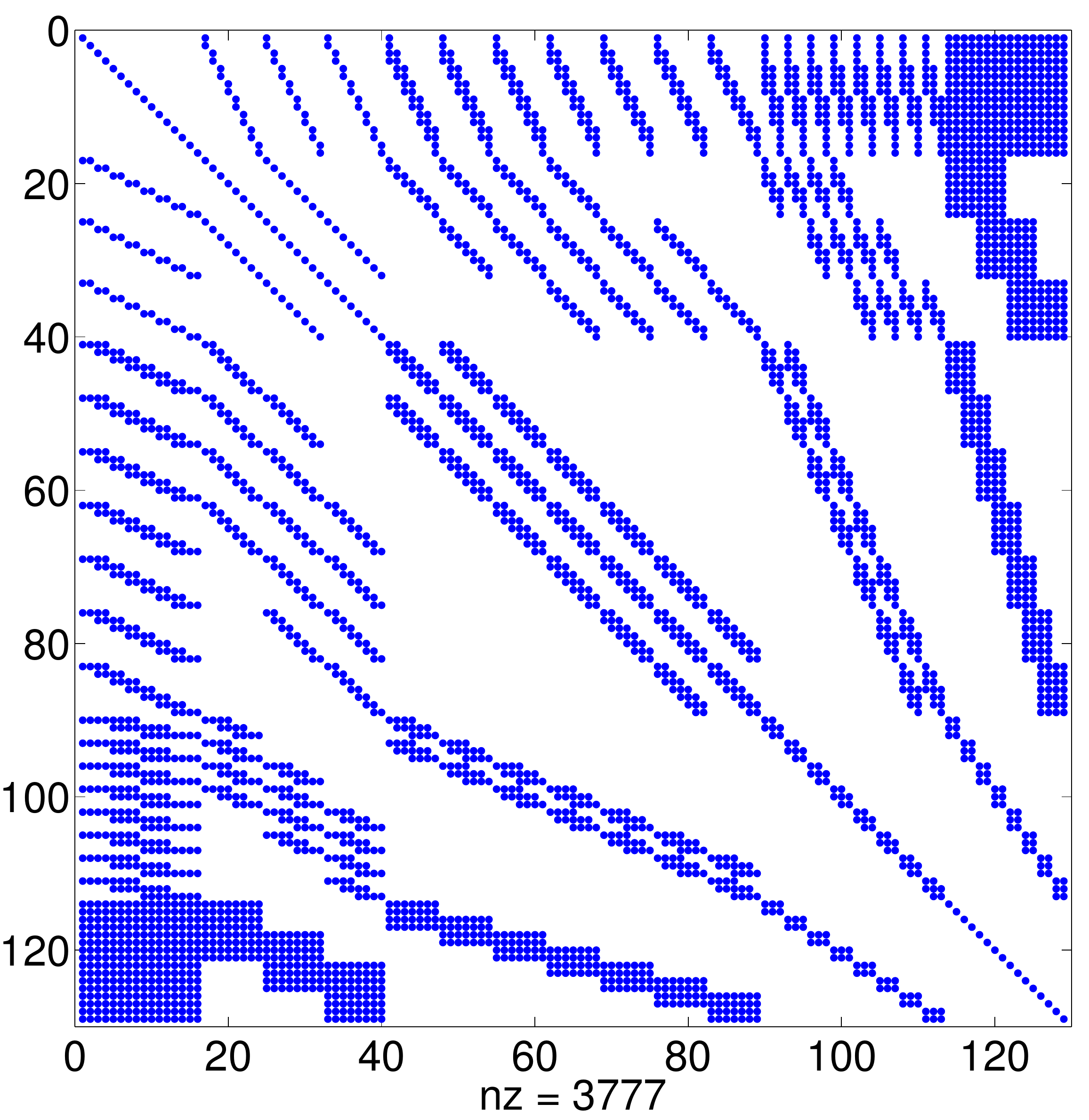}\quad
 \includegraphics[width=4.5cm]{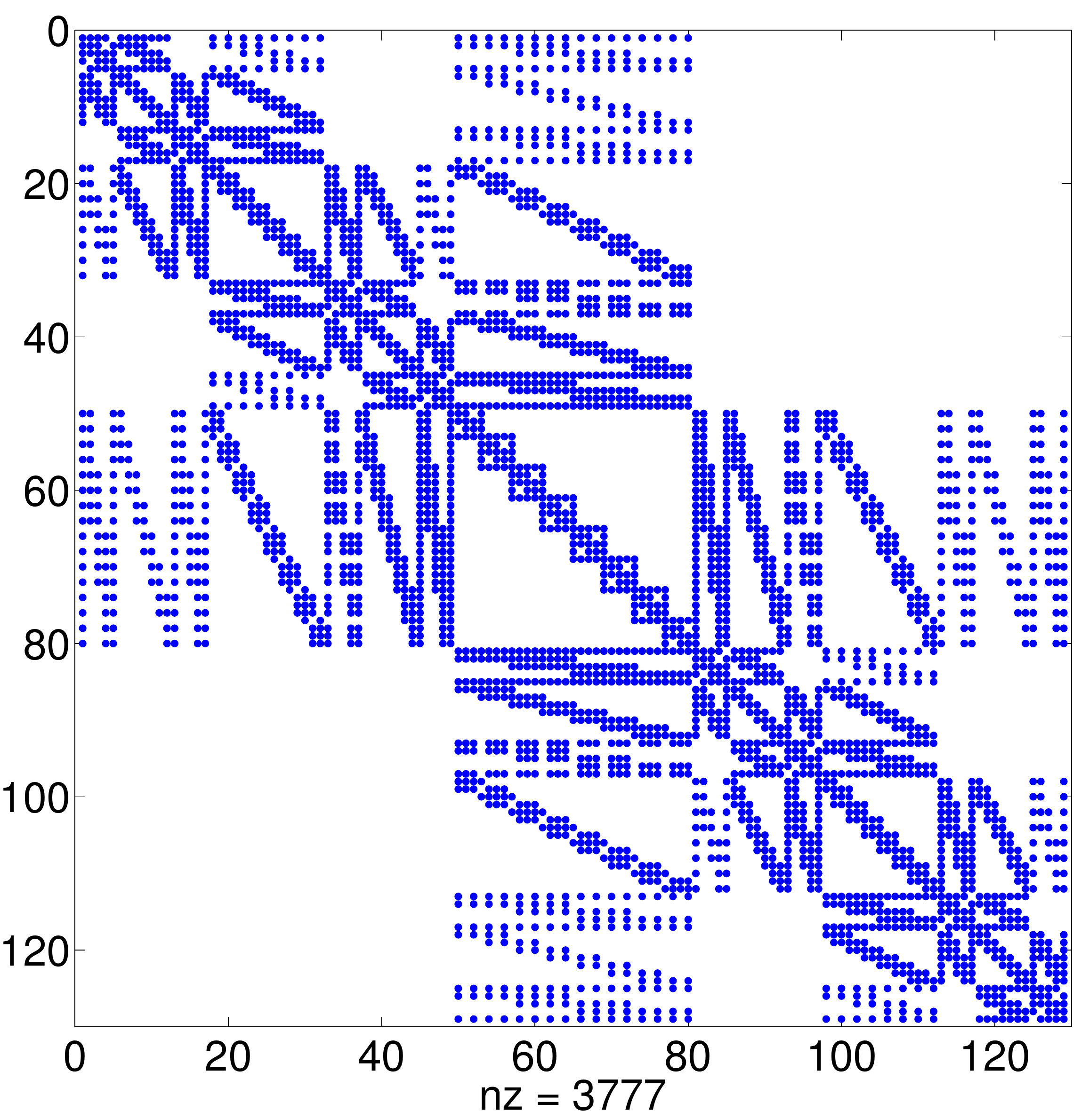}
 \caption{Sparsity patterns  for the multiscale
   method using the basis in~\eqref{eq:comb basis} with $N=32$.
   Left: unknowns  ordered as~\eqref{eq:comb basis}. Right:
   lexicographical ordering.} 
\label{fig:spymulti}
\end{figure}
\begin{figure}[!h]
 \centering
 \includegraphics[width=4.5cm]{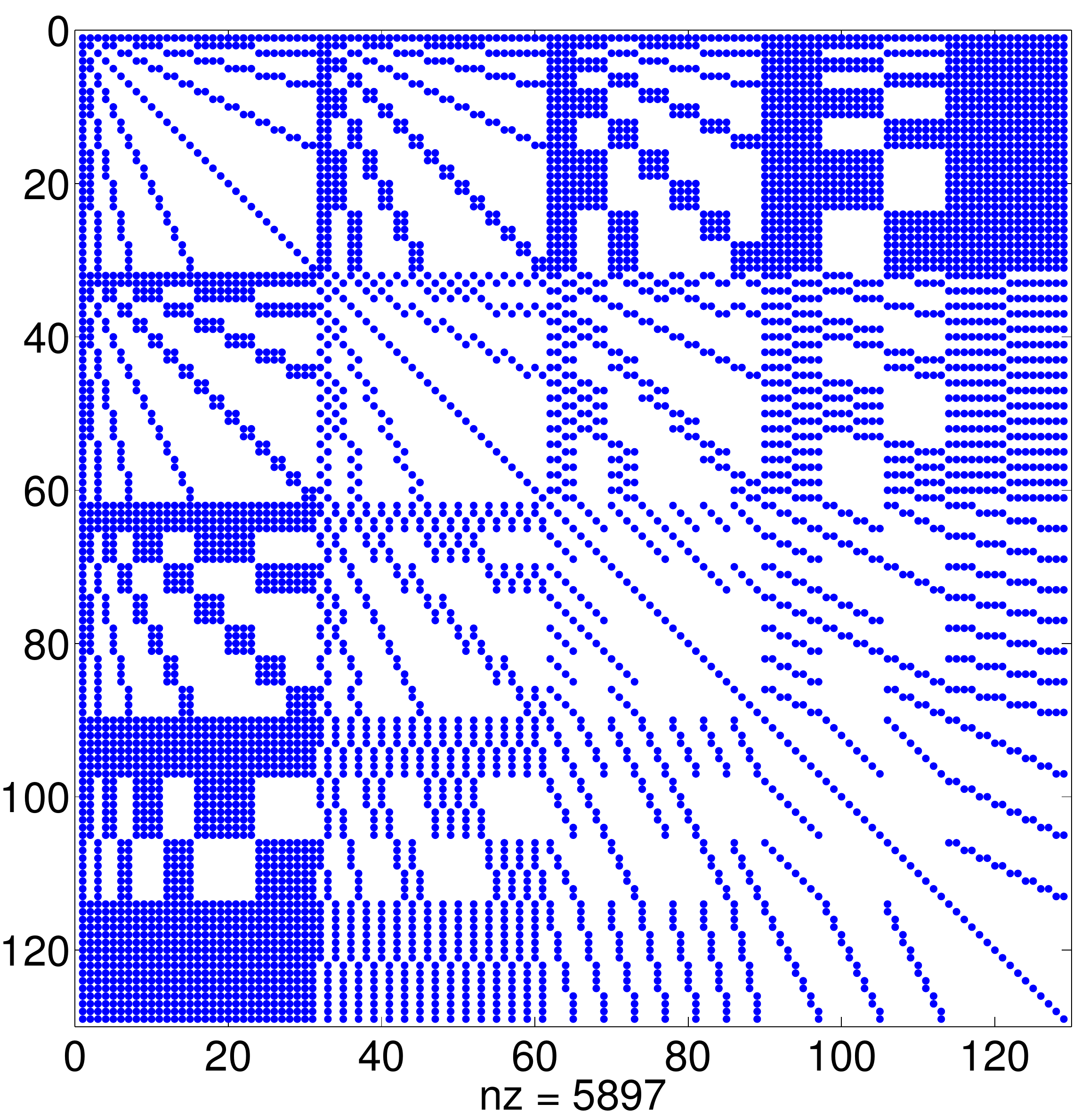}\quad
 \includegraphics[width=4.5cm]{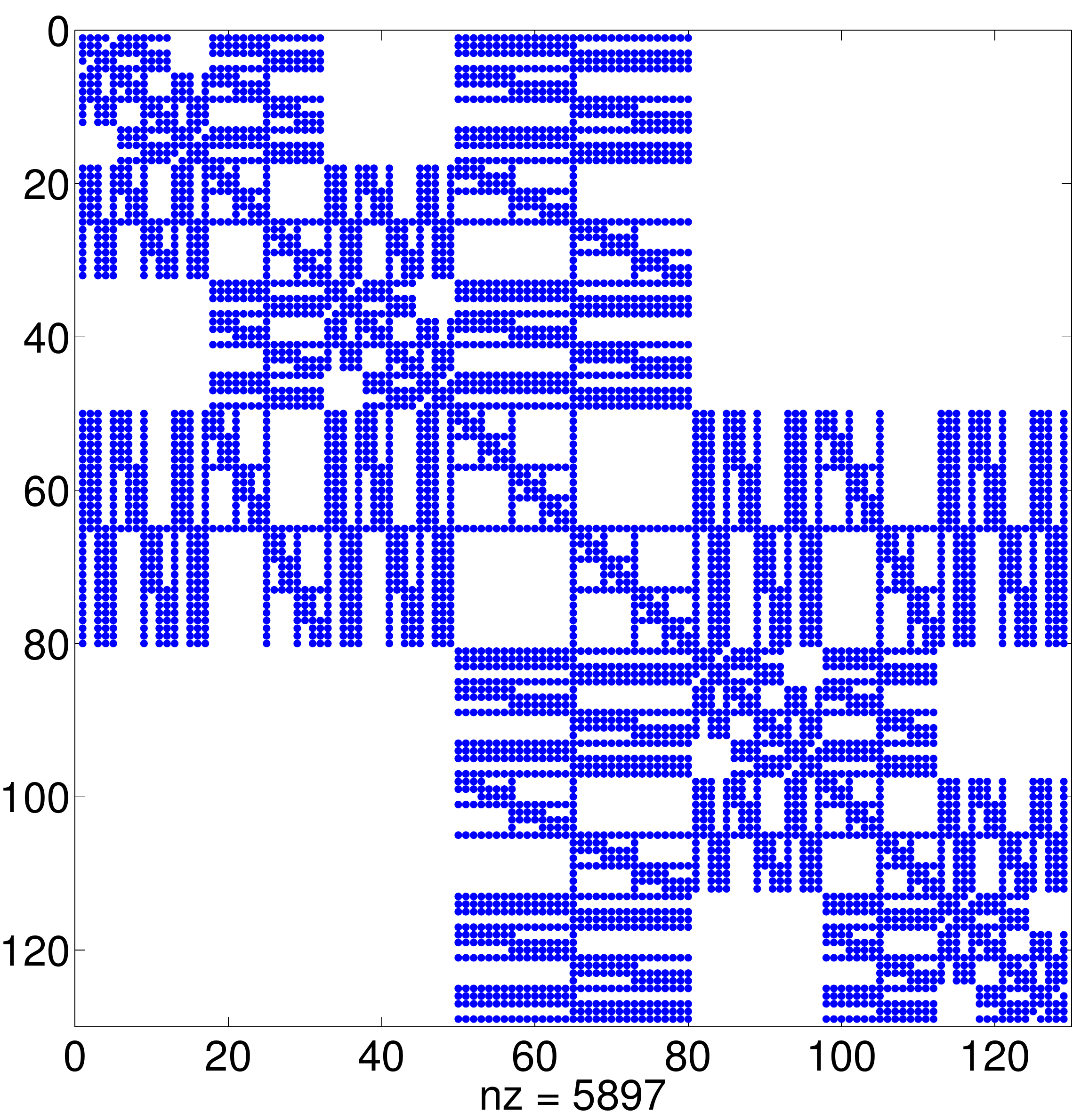}
 \caption{Sparsity patterns  using a hierarchical basis~\eqref{eq:comb hier}
   with $N=32$. Left: unknowns ordered as in~\eqref{eq:comb
     hier}. Right: lexicographical ordering.}
\label{fig:spymultihier}
\end{figure}

\section{Comparison of three FEMs}
\label{sec:good times}
In \autoref{fig:MultiScale ROC}
we presented results that
 demonstrated that the three methods achieve similar levels of
 accuracy. We now wish to quantify if, indeed, the sparse grid
 methods are more efficient than the classical method. To do this in a
 thorough fashion would require a great deal of effort to
\begin{itemize}
\item investigate efficient storage of matrices arising from these
  specialised grids;
\item investigate the design of suitable preconditioners for iterative
  techniques, or  Multigrid methods.
\end{itemize}
These topics are beyond the scope of this 
article. 
Instead, we apply a direct solver, since it is the simplest possible ``black-box'' solver 
and avoids concerns involving 
preconditioners and stopping criteria. Therefore we compare the methods
with respect to the wall-clock time taken by MATLAB's ``backslash''
solver. Observing the diagnostic information provided
(\texttt{spparms('spumoni',1)}), we have verified that this defaults
to the CHOLMOD solver for symmetric positive definite matrices
\cite{Davi08}.

The results  we present  were generated on a single node of a 
Beowulf cluster, equipped with 32 Mb RAM and two  AMD Opteron 2427,
2200 MHz processors, each with 6 cores. The efficiency of parallelised
linear algebra routines can be highly dependent on the matrix
structure. Therefore we present results obtained both using all 12
cores, and using a single core (enforced by 
launching MATLAB with the \texttt{-singleCompThread} option).
All times reported are in seconds, and  have been averaged over three runs.

In \autoref{tab:compare} we can see that, for $N=2^{12}$, over a thousand
seconds are required to solve the system for the classical method on a
single core, and 560 seconds with all 12 cores enabled. (It is
notable, that, for smaller problems,  the solver was more efficient when
using just 1 core).

When the two-scale method is used,  \autoref{tab:compare} shows that
there is no great loss of accuracy, but solve times are reduced by a
factor of 3 on 1 core and (roughly) a factor of 5 on 12
cores. Employing the multiscale method, we see a speed-up of (roughly) 7
on one core, and 15 on 12 cores. 

\begin{table}[htb]
\caption{Comparing  efficiency of the classical, two-scale,
  and multiscale FEMs}
\label{tab:compare}
\centering
\begin{tabular}{l|r|r|r|r}
 \multicolumn{1}{c}{}&
 \multicolumn{4}{c}{Classical Galerkin}\\ \hline   
$N$ &       512 &      1024 &      2048 &      4096 \\ \hline
                Error & 4.293e-03 & 2.147e-03 & 1.073e-03 & 5.346e-04 \\
  Solver Time (1 core) &     3.019 &    18.309 &   134.175 &  1161.049 \\
Solver Time (12 cores) &     6.077 &    26.273 &   119.168 &   562.664 \\
    Degrees of Freedom &    261,121 &   1,046,529 &   4,190,209 &  16,769,025 \\
   Number of Non-zeros &   2,343,961 &   9,406,489 &  37,687,321 & 150,872,089 \\
 \hline \hline
 \multicolumn{1}{c|}{}&
 \multicolumn{4}{c}{Two-scale method}\\ \hline
            $N$ &       512 &      1000 &      2197 &      4096 \\ \hline
                 Error & 4.668e-03 & 2.383e-03 & 1.082e-03 & 5.796e-04 \\
  Solver Time (1 core) &     0.806 &     4.852 &    43.702 &   281.838 \\
Solver Time (12 cores) &     0.810 &     3.274 &    21.448 &   105.727 \\
    Degrees of Freedom &     7,105 &    17,901 &    52,560 &   122,625 \\
   Number of Non-zeros & 1,646,877 & 6,588,773 &33,234,032 & 118,756,013 \\
 \hline \hline
 \multicolumn{1}{c|}{}&
 \multicolumn{4}{c}{Multiscale method}\\ \hline
                   $N$ &       512 &      1024 &      2048 &      4096 \\ \hline
                 Error & 4.437e-03 & 2.219e-03 & 1.109e-03 & 5.529e-04 \\
  Solver Time (1 core) &     0.370 &     2.280 &    20.692 &   150.955 \\
Solver Time (12 cores) &     0.384 &     1.665 &     7.464 &    36.469 \\
    Degrees of Freedom &     4,097 &     9,217 &    20,481 &    45,057 \\
   Number of Non-zeros &   996,545 & 3,944,897 &15,525,569 &61,585,473 \\
 \end{tabular}
\end{table}

\begin{figure}[htb]
\centering
\includegraphics[height=4cm]{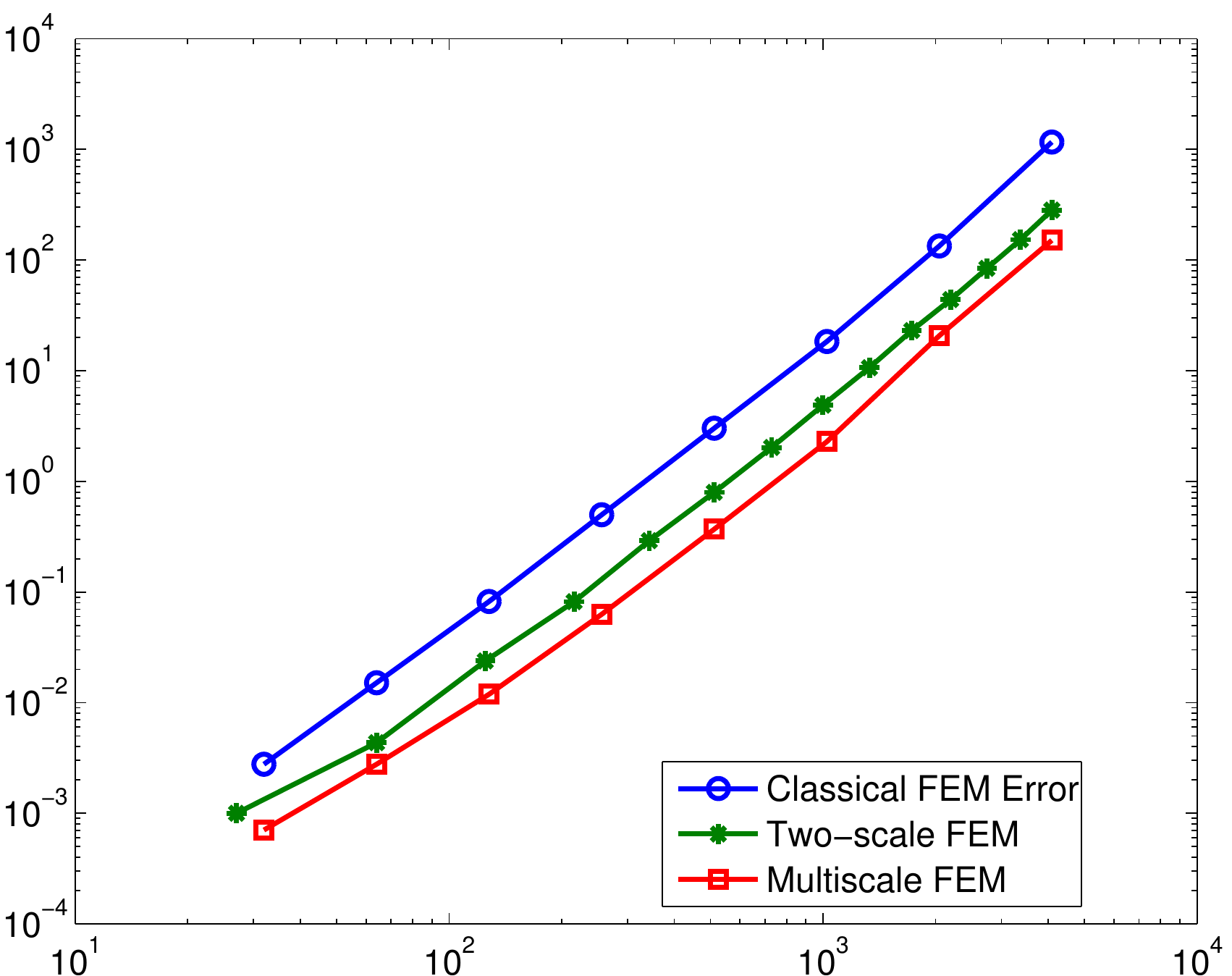} ~
\includegraphics[height=4cm]{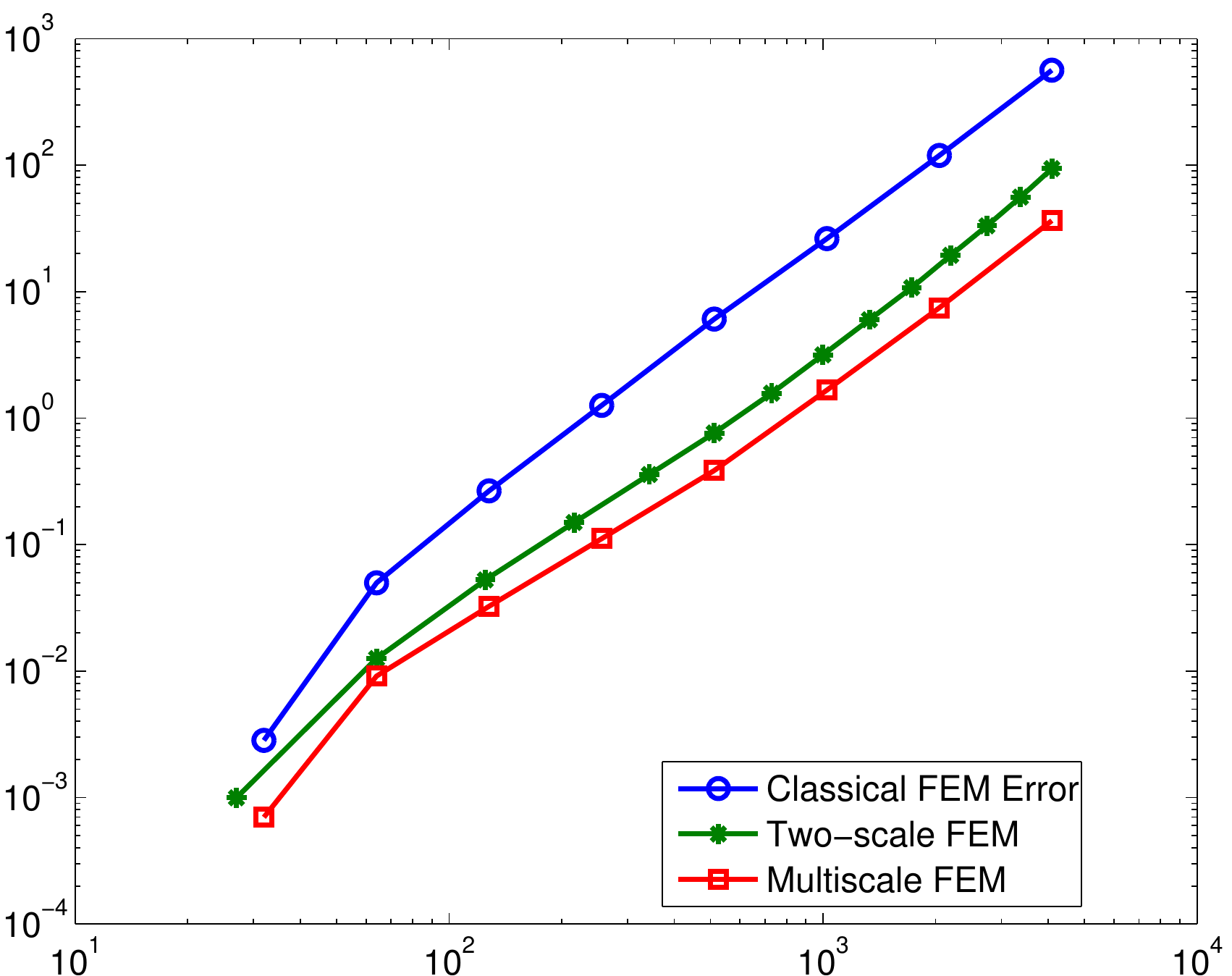}
\caption{Solve time, in seconds, for  linear systems using a direct
  solver on 1 core (left) and 12 cores (right)}
\label{fig:solve times}
\end{figure}
\begin{figure}[h!]
\centering
\includegraphics[height=4cm]{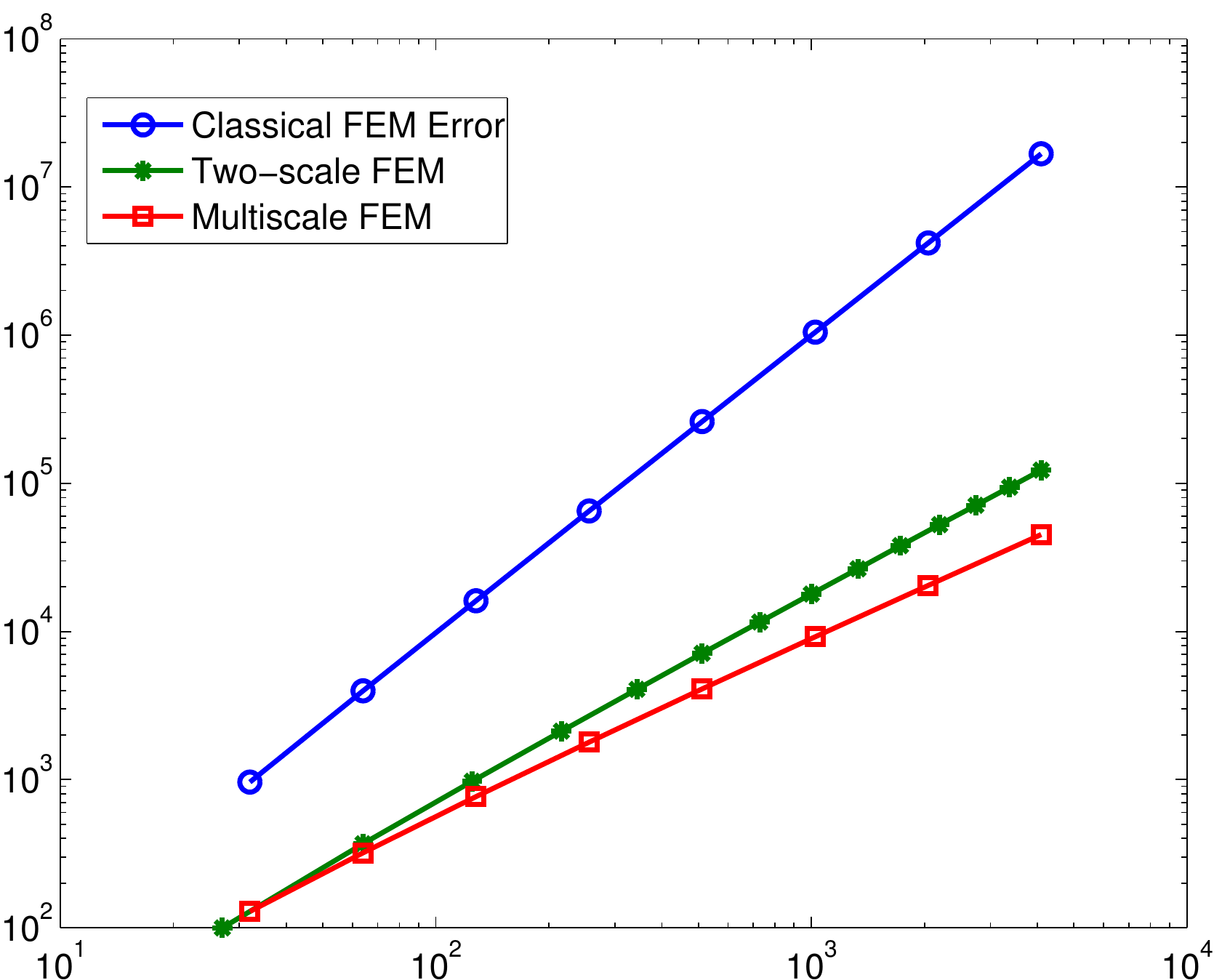} ~
\includegraphics[height=4cm]{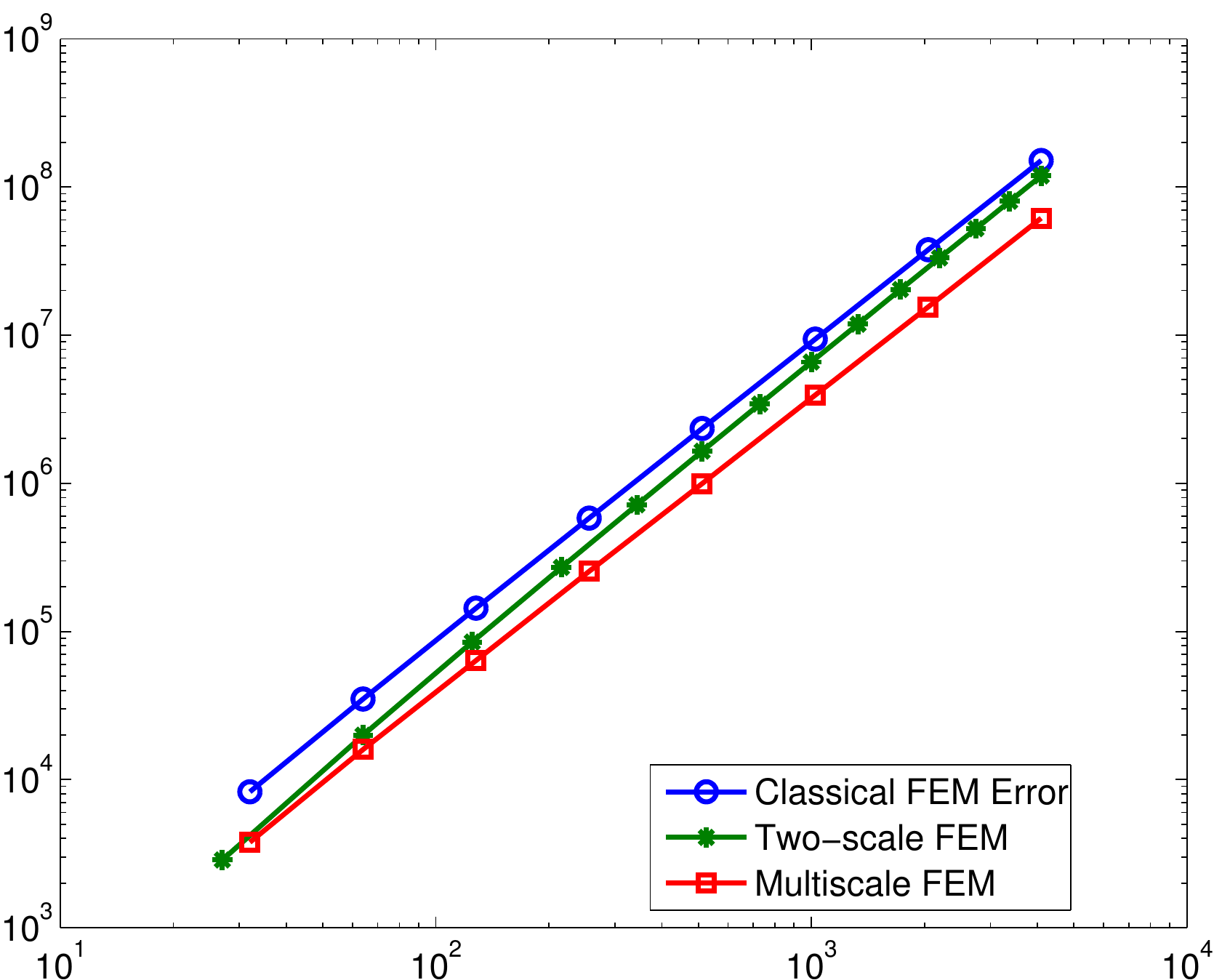}
\caption{The number of degrees of freedom (left) 
and non-zero entries in
 the system matrices (right)}
\label{fig:DOFs and NNZs}
\end{figure}

\section{Conclusions}
\label{sec:conclusions}
There are many published papers on the topic of
sparse grid methods, 
particularly over the last two decades. Often though, they deal with
highly specialized problems, 
and the analysis tends to be directed at readers that are highly
knowledgeable in the fields of  
PDEs, FEMs, and functional
analysis. Here we have presented a style of analysing sparse grid
methods in a way that is  accessible to a  
more general audience. We have also provided snippets of code and
details on how to   
implement these methods in MATLAB.

The methods presented  are not new. However, by treating the usual
multiscale sparse grid method as a generalization of the two-scale
method, we have presented a conceptually simple, yet rigorous, 
 way of understanding and analysing it.

Here we have treated just a  a simple two-dimensional  problem on a
uniform mesh. However, 
this approach  has been employed to analyse
sparse grid methods for specialised problems whose solutions feature
boundary layers, and that are solved on layer-adapted meshes, 
see \cite{RuMa14,RuMa15}.

More interesting generalisations are possible, but the most important
of these is to higher dimensional problems. An exposition of the
issues involved is planned.

\subsection*{Acknowledgements}
The work of Stephen Russell is supported by a fellowship from the
College of Science at the National University of
Ireland, Galway. The simple approach in \eqref{eq:error trick}
in determining the error was shown to us by Christos
Xenophontos.

\bibliographystyle{plain}
\bibliography{stephen}
\end{document}